\documentclass{siamart250211}
\usepackage{titling}
\title{A minimax method for \protect \\ the spectral
fractional Laplacian \protect\\ and related evolution problems}

 \usepackage{authblk}
\usepackage{verbatim}

\author[1]{J.~A. Carrillo}
\author[1]{\authorcr S. Fronzoni}
\author[1]{Y. Nakatsukasa}
\author[1]{E. S\"{u}li}

\affil[1]{
	Mathematical Institute, University of Oxford
}
\affil[ ]{
	OX2 6GG Oxford, United Kingdom
}
\affil[ ]{\textit{carrillo@maths.ox.ac.uk, fronzoni@maths.ox.ac.uk, nakatsukasa@maths.ox.ac.uk, suli@maths.ox.ac.uk}}

\affil[ ]{}

\usepackage[utf8]{inputenc}
\usepackage[T1]{fontenc}
\usepackage{amsmath}
\usepackage[showonlyrefs]{mathtools}
\usepackage{amsfonts}
\usepackage{amssymb}
\usepackage{graphicx}
\usepackage{enumitem}
\usepackage{ifthen}
\usepackage{dsfont}
\usepackage{float}
\usepackage{lettrine}
\usepackage{lipsum}
\usepackage{subcaption}

\usepackage[UKenglish]{babel}

\definecolor{ppGreen}{HTML}{008000}
\definecolor{ppBlue}{HTML}{0000FF}
\definecolor{ppRed}{HTML}{FF0000}
\definecolor{ppPurple}{HTML}{800080}
\definecolor{lightblue}{rgb}{0.145,0.6666,1}
\definecolor{grey52}{RGB}{52,52,52}
\definecolor{color1}{RGB}{0,62,116}
\definecolor{color2}{RGB}{152,152,152}
\definecolor{color3}{RGB}{52,52,52}
\definecolor{color4}{RGB}{100,100,100}

\definecolor{imperialnavy}{RGB}{0,33,71}
\definecolor{imperialblue}{RGB}{0,62,116}
\definecolor{imperialgrey}{RGB}{235,238,238}
\definecolor{imperialcoolgrey}{RGB}{157,157,157}

\definecolor{lille}{RGB}{178, 35, 114} 
\newcounter{review}

\newcommand{\ntcreview}[3]{
	\refstepcounter{review}
	{\color{#2}{\textbf{[#1]}: #3}}
}

\newcommand{\creview}[3]{
	\ntcreview{#1}{#2}{#3}
	\addcontentsline{tor}{subsection}{
		\thereview~\textbf{[#1]}:~#3
	}
}

\newcommand{\review}[2]{\creview{#1}{blue}{#2}}

\makeatletter

\newcommand\listreviewname{List of Reviews}
\newcommand\listofreviews{
	\section*{\listreviewname}\@starttoc{tor}}
\makeatother

\usepackage[all]{nowidow}

\usepackage{tikz}

\usepackage{pgfplots}
\pgfplotsset{compat=1.16}

\usepackage{float}
\usepackage{subcaption}
\usepackage[section]{placeins}
\usepackage{rotating}

\usepackage{array}
\newcolumntype{L}[1]{>{\raggedright\let\newline\\\arraybackslash\hspace{0pt}}m{#1}}
\newcolumntype{C}[1]{>{\centering\let\newline\\\arraybackslash\hspace{0pt}}m{#1}}
\newcolumntype{R}[1]{>{\raggedleft\let\newline\\\arraybackslash\hspace{0pt}}m{#1}}
\usepackage{booktabs}
\usepackage{tabularx}
\usepackage{multirow}

\usepackage{titling}

 \usepackage{accents}

\newcommand\term\emph

\numberwithin{equation}{section}

\usepackage{autobreak}

\usepackage{enumerate}

\makeatletter
\def\@maketitle{
	\newpage
	\begin{center}
		\let \footnote \thanks
		{\LARGE\bfseries \@title \par}
		\vskip 2.5em
			{\large
				\lineskip .5em
				\begin{tabular}[t]{c}
					\@author
				\end{tabular}\par}
		\vskip 1em
			{\large \@date}
	\end{center}
	\par
	\vskip 1.5em}
\makeatother

\makeatletter

\makeatother 
\newtheorem{prop}[theorem]{Proposition}

\newtheorem{remark}[theorem]{\bf Remark}

\usepackage{esint}

\def\XXint#1#2#3{{\setbox0=\hbox{$#1{#2#3}{\int}$ }
			\vcenter{\hbox{$#2#3$ }}\kern-.6\wd0}}

\DeclarePairedDelimiter{\norm}{\|}{\|}

\DeclarePairedDelimiter{\inn}{\langle}{\rangle}

\usepackage{suffix}

\newcommand{\inner}[2]{\inn{#1,#2}}
\WithSuffix\newcommand\inner*[2]{\inn*{#1,#2}}

\DeclarePairedDelimiter{\positive}{(}{)^{+}}
\DeclarePairedDelimiter{\negative}{(}{)^{-}}

\newcommand\pos\positive
\renewcommand\neg\negative

\WithSuffix\newcommand\pos*{\positive*}
\WithSuffix\newcommand\neg*{\negative*}

\newcommand{\pnorm}[2]{\norm{#2}_{\L{#1}}}
\WithSuffix\newcommand\pnorm*[2]{\norm*{#2}_{\L{#1}}}

\newcommand{\psnorm}[3]{\norm{#3}_{\L{#1}(#2)}}
\WithSuffix\newcommand\psnorm*[3]{\norm*{#3}_{\L{#1}(#2)}}

\newcommand{\pnormp}[2]{\pnorm{#1}{#2}^{#1}}
\WithSuffix\newcommand\pnormp*[2]{\pnorm*{#1}{#2}^{#1}}

\newcommand{\psnormp}[3]{\psnorm{#1}{#2}{#3}^{#1}}
\WithSuffix\newcommand\psnormp*[3]{\psnorm*{#1}{#2}{#3}^{#1}}

\newcommand\svec\vec

\renewcommand{\vec}{\mathbf}
\renewcommand{\svec}{\boldsymbol}

\renewcommand{\d}{\mathrm{d}}
\newcommand{\dd}{\mathop{}\!\d}

\newcommand{\dt}{\dd t}

\newcommand{\dx}{\dd x}
\newcommand{\dy}{\dd y}

\newcommand{\ppr}{(r)}

\newcommand{\Wr}{^{W,\,\ppr}}

\newlength{\dhatheight}

\ifthenelse{\isundefined{\Wr}}{
	\newcommand{\Wr}{^{W,\,\ppr}}
}{
	\renewcommand{\Wr}{^{W,\,\ppr}}
}

\newcommand{\s}{s}

\newcommand{\I}{\mathcal{I}}
\newcommand{\Hs}{\mathbb{H}^{\s}}

\newcommand{\V}{\mathcal{V}}

\newcommand{\ddlambda}{\frac{\mathrm{d}}{\mathrm{d}\lambda}}

\newcommand{\ddtheta}{\frac{\mathrm{d}}{\mathrm{d}\theta}} \usepackage{todonotes}

\newif\ifskiptable

\pgfplotsset{
	colormap={hsv}{
			hsb(0.00cm)=(0.00,0,0.95);
			hsb(0.05cm)=(0.05,1,1);
			hsb(0.10cm)=(0.10,1,1);
			hsb(0.15cm)=(0.15,1,1);
			hsb(0.20cm)=(0.20,1,1);
			hsb(0.25cm)=(0.25,1,1);
			hsb(0.30cm)=(0.30,1,1);
			hsb(0.35cm)=(0.35,1,1);
			hsb(0.40cm)=(0.40,1,1);
			hsb(0.45cm)=(0.45,1,1);
			hsb(0.50cm)=(0.50,1,1);
			hsb(0.55cm)=(0.55,1,1);
			hsb(0.60cm)=(0.60,1,1);
			hsb(0.65cm)=(0.65,1,1);
			hsb(0.70cm)=(0.70,1,1);
			hsb(0.75cm)=(0.75,1,1);
			hsb(0.80cm)=(0.80,1,1);
			hsb(0.85cm)=(0.85,1,1);
			hsb(0.90cm)=(0.90,1,1);
			hsb(0.95cm)=(0.95,1,1);
			hsb(1.00cm)=(1.00,1,1);
		}
}

\pgfplotsset{
	colormap={hsvSoft}{
			hsb(0.00cm)=(0.00,0,0.95);
			hsb(0.05cm)=(0.05,1,1);
			hsb(0.10cm)=(0.10,1,1);
			hsb(0.15cm)=(0.15,1,1);
			hsb(0.20cm)=(0.20,1,1);
			hsb(0.25cm)=(0.25,1,1);
			hsb(0.30cm)=(0.30,1,1);
			hsb(0.35cm)=(0.35,1,1);
			hsb(0.40cm)=(0.40,1,1);
			hsb(0.45cm)=(0.45,1,1);
			hsb(0.50cm)=(0.50,1,1);
			hsb(0.55cm)=(0.55,1,1);
			hsb(0.60cm)=(0.60,1,1);
			hsb(0.65cm)=(0.65,1,1);
			hsb(0.70cm)=(0.70,1,1);
			hsb(0.75cm)=(0.75,1,1);
			hsb(0.80cm)=(0.80,1,1);
			hsb(0.85cm)=(0.85,1,1);
			hsb(0.90cm)=(0.90,1,1);
			hsb(0.95cm)=(0.95,1,1);
			hsb(1.00cm)=(0.00,0,0.95);
		}
}

\pgfplotsset{
	colormap={viridisFull}{
			rgb=(0.26700401, 0.00487433, 0.32941519)
			rgb=(0.26851048, 0.00960483, 0.33542652)
			rgb=(0.26994384, 0.01462494, 0.34137895)
			rgb=(0.27130489, 0.01994186, 0.34726862)
			rgb=(0.27259384, 0.02556309, 0.35309303)
			rgb=(0.27380934, 0.03149748, 0.35885256)
			rgb=(0.27495242, 0.03775181, 0.36454323)
			rgb=(0.27602238, 0.04416723, 0.37016418)
			rgb=(0.2770184 , 0.05034437, 0.37571452)
			rgb=(0.27794143, 0.05632444, 0.38119074)
			rgb=(0.27879067, 0.06214536, 0.38659204)
			rgb=(0.2795655 , 0.06783587, 0.39191723)
			rgb=(0.28026658, 0.07341724, 0.39716349)
			rgb=(0.28089358, 0.07890703, 0.40232944)
			rgb=(0.28144581, 0.0843197 , 0.40741404)
			rgb=(0.28192358, 0.08966622, 0.41241521)
			rgb=(0.28232739, 0.09495545, 0.41733086)
			rgb=(0.28265633, 0.10019576, 0.42216032)
			rgb=(0.28291049, 0.10539345, 0.42690202)
			rgb=(0.28309095, 0.11055307, 0.43155375)
			rgb=(0.28319704, 0.11567966, 0.43611482)
			rgb=(0.28322882, 0.12077701, 0.44058404)
			rgb=(0.28318684, 0.12584799, 0.44496 )
			rgb=(0.283072 , 0.13089477, 0.44924127)
			rgb=(0.28288389, 0.13592005, 0.45342734)
			rgb=(0.28262297, 0.14092556, 0.45751726)
			rgb=(0.28229037, 0.14591233, 0.46150995)
			rgb=(0.28188676, 0.15088147, 0.46540474)
			rgb=(0.28141228, 0.15583425, 0.46920128)
			rgb=(0.28086773, 0.16077132, 0.47289909)
			rgb=(0.28025468, 0.16569272, 0.47649762)
			rgb=(0.27957399, 0.17059884, 0.47999675)
			rgb=(0.27882618, 0.1754902 , 0.48339654)
			rgb=(0.27801236, 0.18036684, 0.48669702)
			rgb=(0.27713437, 0.18522836, 0.48989831)
			rgb=(0.27619376, 0.19007447, 0.49300074)
			rgb=(0.27519116, 0.1949054 , 0.49600488)
			rgb=(0.27412802, 0.19972086, 0.49891131)
			rgb=(0.27300596, 0.20452049, 0.50172076)
			rgb=(0.27182812, 0.20930306, 0.50443413)
			rgb=(0.27059473, 0.21406899, 0.50705243)
			rgb=(0.26930756, 0.21881782, 0.50957678)
			rgb=(0.26796846, 0.22354911, 0.5120084 )
			rgb=(0.26657984, 0.2282621 , 0.5143487 )
			rgb=(0.2651445 , 0.23295593, 0.5165993 )
			rgb=(0.2636632 , 0.23763078, 0.51876163)
			rgb=(0.26213801, 0.24228619, 0.52083736)
			rgb=(0.26057103, 0.2469217 , 0.52282822)
			rgb=(0.25896451, 0.25153685, 0.52473609)
			rgb=(0.25732244, 0.2561304 , 0.52656332)
			rgb=(0.25564519, 0.26070284, 0.52831152)
			rgb=(0.25393498, 0.26525384, 0.52998273)
			rgb=(0.25219404, 0.26978306, 0.53157905)
			rgb=(0.25042462, 0.27429024, 0.53310261)
			rgb=(0.24862899, 0.27877509, 0.53455561)
			rgb=(0.2468114 , 0.28323662, 0.53594093)
			rgb=(0.24497208, 0.28767547, 0.53726018)
			rgb=(0.24311324, 0.29209154, 0.53851561)
			rgb=(0.24123708, 0.29648471, 0.53970946)
			rgb=(0.23934575, 0.30085494, 0.54084398)
			rgb=(0.23744138, 0.30520222, 0.5419214 )
			rgb=(0.23552606, 0.30952657, 0.54294396)
			rgb=(0.23360277, 0.31382773, 0.54391424)
			rgb=(0.2316735 , 0.3181058 , 0.54483444)
			rgb=(0.22973926, 0.32236127, 0.54570633)
			rgb=(0.22780192, 0.32659432, 0.546532 )
			rgb=(0.2258633 , 0.33080515, 0.54731353)
			rgb=(0.22392515, 0.334994 , 0.54805291)
			rgb=(0.22198915, 0.33916114, 0.54875211)
			rgb=(0.22005691, 0.34330688, 0.54941304)
			rgb=(0.21812995, 0.34743154, 0.55003755)
			rgb=(0.21620971, 0.35153548, 0.55062743)
			rgb=(0.21429757, 0.35561907, 0.5511844 )
			rgb=(0.21239477, 0.35968273, 0.55171011)
			rgb=(0.2105031 , 0.36372671, 0.55220646)
			rgb=(0.20862342, 0.36775151, 0.55267486)
			rgb=(0.20675628, 0.37175775, 0.55311653)
			rgb=(0.20490257, 0.37574589, 0.55353282)
			rgb=(0.20306309, 0.37971644, 0.55392505)
			rgb=(0.20123854, 0.38366989, 0.55429441)
			rgb=(0.1994295 , 0.38760678, 0.55464205)
			rgb=(0.1976365 , 0.39152762, 0.55496905)
			rgb=(0.19585993, 0.39543297, 0.55527637)
			rgb=(0.19410009, 0.39932336, 0.55556494)
			rgb=(0.19235719, 0.40319934, 0.55583559)
			rgb=(0.19063135, 0.40706148, 0.55608907)
			rgb=(0.18892259, 0.41091033, 0.55632606)
			rgb=(0.18723083, 0.41474645, 0.55654717)
			rgb=(0.18555593, 0.4185704 , 0.55675292)
			rgb=(0.18389763, 0.42238275, 0.55694377)
			rgb=(0.18225561, 0.42618405, 0.5571201 )
			rgb=(0.18062949, 0.42997486, 0.55728221)
			rgb=(0.17901879, 0.43375572, 0.55743035)
			rgb=(0.17742298, 0.4375272 , 0.55756466)
			rgb=(0.17584148, 0.44128981, 0.55768526)
			rgb=(0.17427363, 0.4450441 , 0.55779216)
			rgb=(0.17271876, 0.4487906 , 0.55788532)
			rgb=(0.17117615, 0.4525298 , 0.55796464)
			rgb=(0.16964573, 0.45626209, 0.55803034)
			rgb=(0.16812641, 0.45998802, 0.55808199)
			rgb=(0.1666171 , 0.46370813, 0.55811913)
			rgb=(0.16511703, 0.4674229 , 0.55814141)
			rgb=(0.16362543, 0.47113278, 0.55814842)
			rgb=(0.16214155, 0.47483821, 0.55813967)
			rgb=(0.16066467, 0.47853961, 0.55811466)
			rgb=(0.15919413, 0.4822374 , 0.5580728 )
			rgb=(0.15772933, 0.48593197, 0.55801347)
			rgb=(0.15626973, 0.4896237 , 0.557936 )
			rgb=(0.15481488, 0.49331293, 0.55783967)
			rgb=(0.15336445, 0.49700003, 0.55772371)
			rgb=(0.1519182 , 0.50068529, 0.55758733)
			rgb=(0.15047605, 0.50436904, 0.55742968)
			rgb=(0.14903918, 0.50805136, 0.5572505 )
			rgb=(0.14760731, 0.51173263, 0.55704861)
			rgb=(0.14618026, 0.51541316, 0.55682271)
			rgb=(0.14475863, 0.51909319, 0.55657181)
			rgb=(0.14334327, 0.52277292, 0.55629491)
			rgb=(0.14193527, 0.52645254, 0.55599097)
			rgb=(0.14053599, 0.53013219, 0.55565893)
			rgb=(0.13914708, 0.53381201, 0.55529773)
			rgb=(0.13777048, 0.53749213, 0.55490625)
			rgb=(0.1364085 , 0.54117264, 0.55448339)
			rgb=(0.13506561, 0.54485335, 0.55402906)
			rgb=(0.13374299, 0.54853458, 0.55354108)
			rgb=(0.13244401, 0.55221637, 0.55301828)
			rgb=(0.13117249, 0.55589872, 0.55245948)
			rgb=(0.1299327 , 0.55958162, 0.55186354)
			rgb=(0.12872938, 0.56326503, 0.55122927)
			rgb=(0.12756771, 0.56694891, 0.55055551)
			rgb=(0.12645338, 0.57063316, 0.5498411 )
			rgb=(0.12539383, 0.57431754, 0.54908564)
			rgb=(0.12439474, 0.57800205, 0.5482874 )
			rgb=(0.12346281, 0.58168661, 0.54744498)
			rgb=(0.12260562, 0.58537105, 0.54655722)
			rgb=(0.12183122, 0.58905521, 0.54562298)
			rgb=(0.12114807, 0.59273889, 0.54464114)
			rgb=(0.12056501, 0.59642187, 0.54361058)
			rgb=(0.12009154, 0.60010387, 0.54253043)
			rgb=(0.11973756, 0.60378459, 0.54139999)
			rgb=(0.11951163, 0.60746388, 0.54021751)
			rgb=(0.11942341, 0.61114146, 0.53898192)
			rgb=(0.11948255, 0.61481702, 0.53769219)
			rgb=(0.11969858, 0.61849025, 0.53634733)
			rgb=(0.12008079, 0.62216081, 0.53494633)
			rgb=(0.12063824, 0.62582833, 0.53348834)
			rgb=(0.12137972, 0.62949242, 0.53197275)
			rgb=(0.12231244, 0.63315277, 0.53039808)
			rgb=(0.12344358, 0.63680899, 0.52876343)
			rgb=(0.12477953, 0.64046069, 0.52706792)
			rgb=(0.12632581, 0.64410744, 0.52531069)
			rgb=(0.12808703, 0.64774881, 0.52349092)
			rgb=(0.13006688, 0.65138436, 0.52160791)
			rgb=(0.13226797, 0.65501363, 0.51966086)
			rgb=(0.13469183, 0.65863619, 0.5176488 )
			rgb=(0.13733921, 0.66225157, 0.51557101)
			rgb=(0.14020991, 0.66585927, 0.5134268 )
			rgb=(0.14330291, 0.66945881, 0.51121549)
			rgb=(0.1466164 , 0.67304968, 0.50893644)
			rgb=(0.15014782, 0.67663139, 0.5065889 )
			rgb=(0.15389405, 0.68020343, 0.50417217)
			rgb=(0.15785146, 0.68376525, 0.50168574)
			rgb=(0.16201598, 0.68731632, 0.49912906)
			rgb=(0.1663832 , 0.69085611, 0.49650163)
			rgb=(0.1709484 , 0.69438405, 0.49380294)
			rgb=(0.17570671, 0.6978996 , 0.49103252)
			rgb=(0.18065314, 0.70140222, 0.48818938)
			rgb=(0.18578266, 0.70489133, 0.48527326)
			rgb=(0.19109018, 0.70836635, 0.48228395)
			rgb=(0.19657063, 0.71182668, 0.47922108)
			rgb=(0.20221902, 0.71527175, 0.47608431)
			rgb=(0.20803045, 0.71870095, 0.4728733 )
			rgb=(0.21400015, 0.72211371, 0.46958774)
			rgb=(0.22012381, 0.72550945, 0.46622638)
			rgb=(0.2263969 , 0.72888753, 0.46278934)
			rgb=(0.23281498, 0.73224735, 0.45927675)
			rgb=(0.2393739 , 0.73558828, 0.45568838)
			rgb=(0.24606968, 0.73890972, 0.45202405)
			rgb=(0.25289851, 0.74221104, 0.44828355)
			rgb=(0.25985676, 0.74549162, 0.44446673)
			rgb=(0.26694127, 0.74875084, 0.44057284)
			rgb=(0.27414922, 0.75198807, 0.4366009 )
			rgb=(0.28147681, 0.75520266, 0.43255207)
			rgb=(0.28892102, 0.75839399, 0.42842626)
			rgb=(0.29647899, 0.76156142, 0.42422341)
			rgb=(0.30414796, 0.76470433, 0.41994346)
			rgb=(0.31192534, 0.76782207, 0.41558638)
			rgb=(0.3198086 , 0.77091403, 0.41115215)
			rgb=(0.3277958 , 0.77397953, 0.40664011)
			rgb=(0.33588539, 0.7770179 , 0.40204917)
			rgb=(0.34407411, 0.78002855, 0.39738103)
			rgb=(0.35235985, 0.78301086, 0.39263579)
			rgb=(0.36074053, 0.78596419, 0.38781353)
			rgb=(0.3692142 , 0.78888793, 0.38291438)
			rgb=(0.37777892, 0.79178146, 0.3779385 )
			rgb=(0.38643282, 0.79464415, 0.37288606)
			rgb=(0.39517408, 0.79747541, 0.36775726)
			rgb=(0.40400101, 0.80027461, 0.36255223)
			rgb=(0.4129135 , 0.80304099, 0.35726893)
			rgb=(0.42190813, 0.80577412, 0.35191009)
			rgb=(0.43098317, 0.80847343, 0.34647607)
			rgb=(0.44013691, 0.81113836, 0.3409673 )
			rgb=(0.44936763, 0.81376835, 0.33538426)
			rgb=(0.45867362, 0.81636288, 0.32972749)
			rgb=(0.46805314, 0.81892143, 0.32399761)
			rgb=(0.47750446, 0.82144351, 0.31819529)
			rgb=(0.4870258 , 0.82392862, 0.31232133)
			rgb=(0.49661536, 0.82637633, 0.30637661)
			rgb=(0.5062713 , 0.82878621, 0.30036211)
			rgb=(0.51599182, 0.83115784, 0.29427888)
			rgb=(0.52577622, 0.83349064, 0.2881265 )
			rgb=(0.5356211 , 0.83578452, 0.28190832)
			rgb=(0.5455244 , 0.83803918, 0.27562602)
			rgb=(0.55548397, 0.84025437, 0.26928147)
			rgb=(0.5654976 , 0.8424299 , 0.26287683)
			rgb=(0.57556297, 0.84456561, 0.25641457)
			rgb=(0.58567772, 0.84666139, 0.24989748)
			rgb=(0.59583934, 0.84871722, 0.24332878)
			rgb=(0.60604528, 0.8507331 , 0.23671214)
			rgb=(0.61629283, 0.85270912, 0.23005179)
			rgb=(0.62657923, 0.85464543, 0.22335258)
			rgb=(0.63690157, 0.85654226, 0.21662012)
			rgb=(0.64725685, 0.85839991, 0.20986086)
			rgb=(0.65764197, 0.86021878, 0.20308229)
			rgb=(0.66805369, 0.86199932, 0.19629307)
			rgb=(0.67848868, 0.86374211, 0.18950326)
			rgb=(0.68894351, 0.86544779, 0.18272455)
			rgb=(0.69941463, 0.86711711, 0.17597055)
			rgb=(0.70989842, 0.86875092, 0.16925712)
			rgb=(0.72039115, 0.87035015, 0.16260273)
			rgb=(0.73088902, 0.87191584, 0.15602894)
			rgb=(0.74138803, 0.87344918, 0.14956101)
			rgb=(0.75188414, 0.87495143, 0.14322828)
			rgb=(0.76237342, 0.87642392, 0.13706449)
			rgb=(0.77285183, 0.87786808, 0.13110864)
			rgb=(0.78331535, 0.87928545, 0.12540538)
			rgb=(0.79375994, 0.88067763, 0.12000532)
			rgb=(0.80418159, 0.88204632, 0.11496505)
			rgb=(0.81457634, 0.88339329, 0.11034678)
			rgb=(0.82494028, 0.88472036, 0.10621724)
			rgb=(0.83526959, 0.88602943, 0.1026459 )
			rgb=(0.84556056, 0.88732243, 0.09970219)
			rgb=(0.8558096 , 0.88860134, 0.09745186)
			rgb=(0.86601325, 0.88986815, 0.09595277)
			rgb=(0.87616824, 0.89112487, 0.09525046)
			rgb=(0.88627146, 0.89237353, 0.09537439)
			rgb=(0.89632002, 0.89361614, 0.09633538)
			rgb=(0.90631121, 0.89485467, 0.09812496)
			rgb=(0.91624212, 0.89609127, 0.1007168 )
			rgb=(0.92610579, 0.89732977, 0.10407067)
			rgb=(0.93590444, 0.8985704 , 0.10813094)
			rgb=(0.94563626, 0.899815 , 0.11283773)
			rgb=(0.95529972, 0.90106534, 0.11812832)
			rgb=(0.96489353, 0.90232311, 0.12394051)
			rgb=(0.97441665, 0.90358991, 0.13021494)
			rgb=(0.98386829, 0.90486726, 0.13689671)
			rgb=(0.99324789, 0.90615657, 0.1439362 )
		}
}

\pgfplotsset{
	colormap={viridisSoft}{
			rgb255=(242, 242, 242);
			rgb=(0.28026,0.1657,0.4765);
			rgb=(0.26366,0.23763,0.51877);
			rgb=(0.23744,0.3052,0.54192);
			rgb=(0.20862,0.36775,0.55267);
			rgb=(0.18225,0.42618,0.55711);
			rgb=(0.1592,0.48224,0.55807);
			rgb=(0.13777,0.53749,0.5549);
			rgb=(0.12115,0.59274,0.54465);
			rgb=(0.12808,0.64775,0.5235);
			rgb=(0.18065,0.7014,0.48819);
			rgb=(0.27415,0.75198,0.4366);
			rgb=(0.39517,0.79747,0.36775);
			rgb=(0.53561,0.83578,0.2819);
			rgb=(0.68895,0.86545,0.18272);
			rgb=(0.84557,0.88733,0.0997);
			rgb=(0.99324,0.90616,0.14394)
		}
}

\pgfplotsset{
	colormap={cellRed}{
			rgb255=(242.0,242.0,242.0);
			rgb255=(241.63157894736844,234.47368421052633,234.47368421052633);
			rgb255=(241.26315789473685,226.94736842105266,226.94736842105266);
			rgb255=(240.89473684210526,219.42105263157893,219.42105263157893);
			rgb255=(240.5263157894737,211.89473684210526,211.89473684210526);
			rgb255=(240.1578947368421,204.3684210526316,204.3684210526316);
			rgb255=(239.78947368421052,196.84210526315792,196.84210526315792);
			rgb255=(239.42105263157896,189.31578947368422,189.31578947368422);
			rgb255=(239.05263157894737,181.78947368421052,181.78947368421052);
			rgb255=(238.6842105263158,174.26315789473688,174.26315789473688);
			rgb255=(238.31578947368422,166.73684210526315,166.73684210526315);
			rgb255=(237.94736842105263,159.21052631578948,159.21052631578948);
			rgb255=(237.57894736842104,151.68421052631578,151.68421052631578);
			rgb255=(237.21052631578948,144.1578947368421,144.1578947368421);
			rgb255=(236.84210526315792,136.63157894736844,136.63157894736844);
			rgb255=(236.47368421052633,129.10526315789474,129.10526315789474);
			rgb255=(236.10526315789474,121.57894736842107,121.57894736842107);
			rgb255=(235.73684210526318,114.05263157894737,114.05263157894737);
			rgb255=(235.3684210526316,106.52631578947368,106.52631578947368);
			rgb255=(235.0,99.0,99.0);
		}
}

\pgfplotsset{
	colormap={cellGreen}{
			rgb255=(242.0,242.0,242.0);
			rgb255=(236.21052631578948,239.5263157894737,234.26315789473685);
			rgb255=(230.42105263157896,237.05263157894737,226.5263157894737);
			rgb255=(224.6315789473684,234.57894736842104,218.78947368421052);
			rgb255=(218.8421052631579,232.10526315789474,211.05263157894737);
			rgb255=(213.05263157894737,229.63157894736844,203.31578947368422);
			rgb255=(207.26315789473685,227.1578947368421,195.57894736842107);
			rgb255=(201.4736842105263,224.68421052631578,187.8421052631579);
			rgb255=(195.68421052631578,222.21052631578948,180.10526315789474);
			rgb255=(189.8947368421053,219.73684210526318,172.36842105263162);
			rgb255=(184.10526315789474,217.26315789473682,164.63157894736844);
			rgb255=(178.31578947368422,214.78947368421052,156.89473684210526);
			rgb255=(172.5263157894737,212.31578947368422,149.1578947368421);
			rgb255=(166.73684210526318,209.84210526315792,141.42105263157896);
			rgb255=(160.94736842105263,207.3684210526316,133.6842105263158);
			rgb255=(155.1578947368421,204.89473684210526,125.94736842105263);
			rgb255=(149.3684210526316,202.42105263157893,118.21052631578948);
			rgb255=(143.57894736842104,199.94736842105266,110.47368421052632);
			rgb255=(137.78947368421052,197.47368421052633,102.73684210526316);
			rgb255=(132.0,195.0,95.0);
		}
}

\pgfplotsset{
	colormap={cellRedSquared}{
			rgb255=(242.0,242.0,242.0);
			rgb255=(241.28254847645428,227.34349030470915,227.34349030470915);
			rgb255=(240.60387811634348,213.47922437673128,213.47922437673128);
			rgb255=(239.9639889196676,200.40720221606648,200.40720221606648);
			rgb255=(239.36288088642658,188.1274238227147,188.1274238227147);
			rgb255=(238.8005540166205,176.63988919667594,176.63988919667594);
			rgb255=(238.2770083102493,165.94459833795014,165.94459833795014);
			rgb255=(237.79224376731304,156.04155124653738,156.04155124653738);
			rgb255=(237.34626038781164,146.93074792243766,146.93074792243766);
			rgb255=(236.93905817174516,138.61218836565098,138.61218836565098);
			rgb255=(236.57063711911357,131.0858725761773,131.0858725761773);
			rgb255=(236.2409972299169,124.35180055401662,124.35180055401662);
			rgb255=(235.95013850415512,118.40997229916897,118.40997229916897);
			rgb255=(235.69806094182823,113.26038781163435,113.26038781163435);
			rgb255=(235.4847645429363,108.90304709141274,108.90304709141274);
			rgb255=(235.3102493074792,105.33795013850416,105.33795013850416);
			rgb255=(235.17451523545705,102.56509695290858,102.56509695290858);
			rgb255=(235.0775623268698,100.58448753462605,100.58448753462605);
			rgb255=(235.01939058171746,99.3961218836565,99.3961218836565);
			rgb255=(235.0,99.0,99.0);
		}
}
\pgfplotsset{
	colormap={cellGreenSquared}{
			rgb255=(242.0,242.0,242.0);
			rgb255=(230.7257617728532,237.18282548476455,226.93351800554018);
			rgb255=(220.06094182825484,232.62603878116343,212.6814404432133);
			rgb255=(210.00554016620498,228.32963988919667,199.2437673130194);
			rgb255=(200.5595567867036,224.29362880886427,186.62049861495845);
			rgb255=(191.7229916897507,220.5180055401662,174.8116343490305);
			rgb255=(183.49584487534625,217.0027700831025,163.81717451523545);
			rgb255=(175.87811634349032,213.74792243767314,153.63711911357342);
			rgb255=(168.86980609418282,210.75346260387812,144.27146814404432);
			rgb255=(162.47091412742384,208.01939058171746,135.72022160664818);
			rgb255=(156.68144044321332,205.54570637119116,127.98337950138506);
			rgb255=(151.50138504155126,203.33240997229916,121.06094182825484);
			rgb255=(146.9307479224377,201.37950138504155,114.95290858725764);
			rgb255=(142.96952908587255,199.68698060941827,109.65927977839334);
			rgb255=(139.61772853185596,198.25484764542935,105.18005540166205);
			rgb255=(136.8753462603878,197.0831024930748,101.51523545706371);
			rgb255=(134.74238227146813,196.17174515235456,98.66481994459834);
			rgb255=(133.21883656509695,195.5207756232687,96.62880886426592);
			rgb255=(132.30470914127426,195.13019390581718,95.40720221606648);
			rgb255=(132.0,195.0,95.0);
		}
} 
\usepgfplotslibrary{patchplots}

\pgfplotsset{every axis/.append style={
			grid=both,
			grid style={white, line width=.1pt},
			major grid style={white, line width=1.5pt},
			axis background/.style={fill=gray!10},
			axis line style={draw=none},
			tick style={draw=none},
			xlabel = $x$,
			line width=1pt,
			legend style={
					line width = 1pt,
					draw=none,
					/tikz/every even column/.append style={column sep=0.5cm}
				},
		}}

\usepgfplotslibrary{groupplots}

\usepackage{calc}

\definecolor{gg0}{HTML}{E24A33}
\definecolor{gg1}{HTML}{348ABD}
\definecolor{gg2}{HTML}{988ED5}
\definecolor{gg3}{HTML}{777777}
\definecolor{gg4}{HTML}{FBC15E}
\definecolor{gg5}{HTML}{8EBA42}
\definecolor{gg6}{HTML}{FFB5B8}

\pgfplotsset{
	/pgfplots/colormap={bright}{rgb255=(0,0,0) rgb255=(78,3,100) rgb255=(2,74,255)
			rgb255=(255,21,181) rgb255=(255,113,26) rgb255=(147,213,114) rgb255=(230,255,0)
			rgb255=(255,255,255)}
}

\newcommand{\addappendix}{
	\section*{\appendixname}
	\addcontentsline{toc}{section}{\appendixname}
	\counterwithin*{figure}{section}
	\stepcounter{section}
	\renewcommand{\thesection}{A}
	\renewcommand{\thefigure}{\thesection.\arabic{figure}}
}

\definecolor{brandeisblue}{rgb}{0.0, 0.44, 1.0}
\definecolor{lincolngreen}{rgb}{0.11, 0.35, 0.02}
\definecolor{indiagreen}{rgb}{0.07, 0.53, 0.03}
\definecolor{venetianred}{rgb}{0.78, 0.03, 0.08}
\definecolor{darkorange}{rgb}{1.0, 0.55, 0.0}
\definecolor{burntorange}{rgb}{0.8, 0.33, 0.0}
\definecolor{flame}{rgb}{0.89, 0.35, 0.13}
\definecolor{non-photoblue}{rgb}{0.64, 0.87, 0.93}

\usepackage{tikz}
\usepackage{tkz-base}
\usetikzlibrary{calc}
\usepackage{tkz-euclide}

\renewcommand{\review}[2]{}
\renewcommand{\creview}[3]{}
\renewcommand{\ntcreview}[3]{}

\renewcommand{\tableofcontents}{}
\renewcommand{\listofreviews}{}

\definecolor{revisionColourOne}{RGB}{180,0,0}
\definecolor{revisionColourTwo}{RGB}{0,0,180}

\usepackage{fullpage}
\usepackage[compatibility=false]{caption}

\usepackage{setspace}
\setstretch{1.1}

\begin{document}
\begin{singlespace}\maketitle\end{singlespace}
\begin{abstract}
We present a numerical method for the computation of the inverse of the fractional Laplacian, $(-\Delta)^{s}$, based on its spectral definition, using rational functions to approximate the fractional power $A^{-s}$ of a matrix $A$, for $0<s<1$.  The proposed numerical method is fast and accurate, benefiting from the fact that the matrix $A$ arises from a finite element approximation of the Laplacian $-\Delta$, which makes it applicable to a wide range of computational domains with potentially irregular shapes. We make use of state-of-the-art software to compute the best rational approximation of a fractional power. We analyze the convergence rate of our method and validate our findings through a series of numerical experiments with a range of exponents $s \in (0,1)$. Additionally, we apply the proposed numerical method to different evolution problems that involve the fractional Laplacian through an interaction potential: the fractional porous medium equation and the fractional Keller--Segel equation. We then investigate the accuracy of the resulting numerical method, focusing in particular on the accurate reproduction of qualitative properties of the associated analytical solutions to these partial differential equations.
\end{abstract}
 
\section{Introduction}\label{Sect:1}

The fractional Laplacian denoted, for $s \in (0,1)$, by $(-\Delta)^s$ is a nonlocal operator based on taking the $s$-th power of the classical Laplacian $-\Delta$. The fractional Laplacian has become a cornerstone in the modeling of phenomena that exhibit anomalous diffusion and nonlocal effects, such as long-jump diffusion; we refer the reader to \cite{Valdinoci2009, Bucur2016, Vazquez2017}. Unlike the classical Laplacian, the fractional Laplacian is a nonlocal operator, requiring information from the entire domain of definition of the function to which it is applied in order to evaluate its action to the function at any point of the domain. This nonlocal nature introduces specific theoretical and computational challenges, making its numerical approximation a thriving area of research. Over the years, several techniques for the numerical approximation of the fractional Laplacian have been developed, based on different mathematical definitions, see \cite{Kwasnicki2017, whatis2020, Bonito2018}. The choice of definition of the fractional Laplacian often dictates the numerical approach, as various definitions give rise to distinct computational requirements.
One of the most commonly used definitions of the fractional Laplacian involves its integral representation, that is 
\begin{equation} \label{FracLapInt}
(-\Delta)^s u(x) = C_{d, s} \,\textrm{P.V.}\! \int_{\mathbb{R}^{d}} \frac{u(x) - u(y)}{|x-y|^{d+2s}} \dy,\quad x \in \mathbb{R}^d,
\end{equation}
where $C_{d,s}$ is a positive real constant and $\textrm{P.V.}$ stands for the principal value of the integral over $\mathbb{R}^{d}$. 

For the integral representation of the fractional Laplacian, finite difference schemes approximate the principal-value integral by discretizing the associated singular convolution. These methods are simple to construct and are widely used on regular meshes over axiparallel domains.
In \cite{Huang2014} the authors proposed finite difference methods for spatially fractional diffusion equations, demonstrating robust convergence for smooth solutions. The authors of \cite{DelTeso2018, DelTeso2019} analyzed and developed further finite difference methods for fractional-order differential equations. 
These techniques benefit from their conceptual simplicity and ease of implementation, but they also have disadvantages, including limited flexibility on
nonuniform grids and
irregular domain geometries.

In the context of the integral representation \eqref{FracLapInt}, finite element methods (FEM) provide a variational framework for problems involving the fractional Laplacian. Through the use of suitable weak formulations, these techniques deliver accurate numerical solutions, particularly for irregular domains or problems whose solutions exhibit singular behavior near domain boundaries. These methods have been developed to handle singularities in the kernel, adaptive refinement of the computational mesh, and incorporate efficient quadrature schemes for numerical integration. In \cite{Acosta2017}, a FEM for the integral fractional Laplacian in bounded domains was developed, focusing on accurate truncation and error control. Subsequently, adaptive quadrature rules for finite element discretizations were introduced in \cite{Ainsworth2018}, ensuring efficient computation in irregular geometries, while \cite{Zhang2019} proposed a nonlocal FEM with adaptive mesh refinement to address the singularity of the kernel. These techniques retain all the advantages of FEM, however, because they rely on the integral representation of the fractional Laplacian, they still face issues related to their high computational and implementation complexity due to dense matrix representations and numerical integration. 

In order to avoid dealing with the singular kernel in the expression for the integral representation of the fractional Laplacian, the Caffarelli--Silvestre extension \cite{caffarelli2016fractional} reformulated the fractional Laplacian as a Dirichlet-to-Neumann map for an elliptic problem in a higher-dimensional domain. This formulation transforms the nonlocal problem in $\mathbb{R}^d$ into a local PDE in a higher-dimensional space $\mathbb{R}^{d+1}$, enabling the use of standard FEM techniques. Following this reformulation, the authors of \cite{Nochetto2015} developed a weighted FEM, providing rigorous error analysis and convergence rates. Although these methods have the disadvantage of requiring additional computational resources to solve a PDE in higher dimensions, as well as a sufficiently refined graded mesh, some of these issues were subsequently addressed in the work \cite{Schwab2019} and in the recent contribution \cite{Schwab2023}. 

For problems defined on a bounded domain $\Omega$, with a suitable (e.g. Dirichlet or Neumann) boundary condition imposed on $\partial \Omega$, an equally natural definition of the fractional Laplacian is based on its spectral definition, involving eigenvalues and eigenfunctions of the standard Laplacian. An appealing feature of it is that it avoids the presence of the singular kernel appearing in the integral representation of the fractional Laplacian. The precise definition will be given in the next section.

Several previous approaches in the literature have used explicit representations of the spectral fractional Laplacian involving the heat semigroup (cf., for example, \cite{Cusimano2018} for further details), while finite element discretizations have resulted in fractional powers $A^{s}$ of the matrix representation $A$ of the discrete Laplacian. Of previous contributions that focus on the latter approach, we highlight in particular \cite{bonito2015}, with subsequent developments discussed in \cite{bonito2017, bonito2021, bonito2022}. Motivated by these, in \cite{carrillosuli2024} a finite element method was developed and analyzed for a drift-diffusion PDE involving the spectral definition of the fractional Laplacian. Our main goal here is to focus on the development of an efficient and accurate implementation of that method through a fast algorithm, 
with a provable error bound.
The algorithm is based on rational approximation of functions and we mention \cite{Miroslav2023} for some recent work that has also followed this direction, for a different PDE model unrelated to the fractional Laplacian, which is the primary focus of the present work.

This technique has the advantage that it comes from a finite element approximation, it is capable of dealing with different domain geometries, and it is accurate, despite the fact that it does not require full knowledge of the spectrum of the matrix representation $A$ of the discrete Laplacian, as in \cite{carrillofronzoni2024}. 
The proposed numerical approximation can be easily generalized to the fractional power of any positive self-adjoint linear operator. These advantages provide a fast and accurate numerical approximation of the fractional Laplacian, which can then be used in the numerical solution of nonlocal evolutionary PDEs.

The outline of the paper is as follows. In the next section we introduce the relevant notation and our numerical method for the approximation of the spectral fractional Laplacian, based on rational functions. We describe the algorithm, together with its computational benefits. We also provide an error estimate for our method and we validate the predicted order of convergence through a series of numerical experiments. In Section 3 we apply our computational method to two nonlinear evolution problems of relevance in applications involving the fractional Laplacian: the fractional porous medium equation and the fractional Keller--Segel model. For these models we confirm some of the qualitative properties that are known to hold, and we further explore certain additional features.

\section{A new computational method for the spectral fractional Laplacian}

The aim of this section is to describe the numerical approximation that we use for the spectral fractional Laplacian, $(-\Delta)^s$, in the context of solving a fractional Poisson equation.

\subsection{Spectral fractional operator}
We begin by introducing the definitions and notational conventions used for the spectral fractional Laplacian  operator; the reader is referred to \cite{caffarelli2016fractional,antil2018} for further details. 

Suppose that $\Omega$ is a bounded open Lipschitz domain in $\mathbb{R}^d$.
Let us denote by $L^{2}_{\ast}(\Omega)$ and  $H^{1}_{\ast}(\Omega)$ the closed linear subspace of $L^{2}(\Omega)$ and $H^{1}(\Omega)$ with zero integral average on $\Omega$, respectively; that is, 
$$L^{2}_{\ast}(\Omega) := \bigg\{ v \in L^{2}(\Omega) \text{ such that } \int_{\Omega}v \dx = 0 \bigg\}\quad
\mbox{and}  
\quad
H^{1}_{\ast}(\Omega) := \bigg\{ v \in H^{1}(\Omega) \textrm{ such that } \int_{\Omega} v \dx= 0 \bigg\}.$$
Let $0<s<1$; we consider the following fractional Poisson equation on  $\Omega \subset \mathbb{R}^{d}$:
\begin{equation} \label{FracPoi}
(-\Delta)^{s} u = f \quad \textrm{in } \Omega,
\end{equation}
subject to a suitable boundary condition. The precise definition of $(-\Delta)^{s}$ will be stated below. We let $\mathcal{V}:=H^{1}_{0}(\Omega)$ in the case of a homogeneous Dirichlet boundary condition, $u=0$ on $\partial \Omega$, and $\mathcal{V}:=H^{1}_\ast(\Omega)$ in the case of a homogeneous Neumann boundary condition, $\partial_{n}u = 0$ on $\partial \Omega$. In addition, we shall assume that $f\in L^2(\Omega)$ in the case of a Dirichlet boundary condition and $f\in L^2_\ast(\Omega)$ in the case of a Neumann boundary condition. These assumptions ensure the existence of a unique solution $u$ (see \cite{antil2018}). 

We denote by $(\cdot, \cdot)$ the $L^{2}(\Omega)$ inner product and  by $\|\cdot\|_{L^{p}(\Omega)}$ and $\|\cdot\|_{H^{1}(\Omega)}$ the standard $L^{p}(\Omega)$ and $H^{1}(\Omega)$ norm, respectively, with $p \in [1,\infty]$.

The fractional Laplacian $(-\Delta)^{\s}$ in \eqref{FracPoi} is understood to be based on its spectral definition, using the eigenvalues and eigenfunctions of the Dirichlet Laplacian $-\Delta_{\mathrm{D}}$ or the Neumann Laplacian $-\Delta_{\mathrm{N}}$, depending on the boundary condition. Let $-\Delta_{\mathcal{B}}$ denote  either the Dirichlet Laplacian or the Neumann Laplacian.  Then, the pair $(\mathrm{Dom}(-\Delta_\mathcal{B}), -\Delta_{\mathcal{B}})$ is defined
as follows: 
\begin{align*}
 \mathrm{Dom}(-\Delta_{\mathcal{B}}) := \bigg\{ u \in \mathcal{V}\,:\, H^1(\Omega)\ni & \,v \mapsto \!\int_\Omega \nabla u \cdot \nabla v \dx\quad \mbox{is a continuous linear funcional on } L^2(\Omega)\bigg \},\\
 \int_\Omega (-\Delta_{\mathcal{B}} u)\, v &:= \int_\Omega \nabla u \cdot \nabla v \dx, \quad u \in \mathrm{Dom}(-\Delta_{\mathcal{B}}),\, \, v \in \mathcal{V}.
\end{align*}
Thus, $-\Delta_{\mathcal{B}}$ is a nonnegative and selfadjoint linear operator that is densely defined in $L^2(\Omega)$ in the case of the Dirichlet Laplacian, and in $L^2_\ast(\Omega)$ in the case of the Neumann Laplacian; thanks to the Hilbert--Schmidt theorem (cf., for example, Lemma 5.1 in \cite{FS}),
there exists an orthonormal basis of $L^2(\Omega)$, respectively $L^2_\ast(\Omega)$, consisting of eigenfunctions $\psi_k \in \mathcal{V}\setminus\{0\}$, $k = 1,2,\ldots$, with corresponding eigenvalues $0 <  \lambda_1 \leq \lambda_2 \leq \cdots \nearrow +\infty$. 
Thus, by writing  $\mathcal{B} (\psi_k) = \psi_{k}$ if we have a Dirichlet boundary condition and $\mathcal{B} (\psi_k) = \partial_{n} \psi_{k}$ for a Neumann boundary condition,
\begin{equation}\label{eigen}
    \left \{ \begin{aligned}
        -\Delta \psi_k &= \lambda_k \psi_k &&\quad \textrm{in } \Omega,  \\
        \mathcal{B} (\psi_k) &= 0 &&\quad\textrm{on } \partial \Omega, 
    \end{aligned} \right.
\end{equation}
with the partial differential equation in the first line of \eqref{eigen} understood as an equality in $L^2(\Omega)$, and the homogeneous boundary condition appearing in the second line of \eqref{eigen} as an equality in $H^{\frac{1}{2}}(\partial\Omega)$ in the case of a Dirichlet boundary condition, and as an equality in $H^{-\frac{1}{2}}(\partial\Omega)$ in the case of a Neumann boundary condition.
For $\s \in (0, 1)$ the domain of the spectral fractional Laplacian $(-\Delta_{\mathcal{B}})^s$ is defined as the Hilbert space
\begin{equation} \label{DirFracLap}
\Hs(\Omega) := \Bigg \{ u(\cdot) = \sum_{k=1}^{\infty} u_{k} \psi_{k}(\cdot) \in L^{2}(\Omega): \|u\|^{2}_{\Hs(\Omega)} := \sum_{k=1}^{\infty} \lambda_{k}^{\s} u_{k}^{2} < \infty \Bigg \} \quad \textrm{with } u_{k} := \int_{\Omega} u(x) \psi_{k}(x) \dx
\end{equation}
for $k=1,2, \ldots$ in the case of a homogeneous Dirichlet boundary condition, and 
\begin{equation} \label{NeuFracLap}
\Hs(\Omega) := \Bigg \{ u(\cdot) = \sum_{k=1}^{\infty} u_{k} \psi_{k}(\cdot) \in L^{2}_\ast(\Omega): \|u\|^{2}_{\Hs(\Omega)} := \sum_{k=1}^{\infty} \lambda_{k}^{\s} u_{k}^{2} < \infty \Bigg \} \quad \textrm{with } u_{k} := \int_{\Omega} u(x) \psi_{k}(x) \dx
\end{equation}
for $k=1,2,\dots$ if we are considering a homogeneous Neumann boundary condition. In the remainder of the paper, with the exception of the Appendix, we will use $\mathbb{H}^s(\Omega)$ as a unified notation for both of these cases; the distinction between them will be implicit in the type of Laplacian (i.e., Dirichlet or Neumann) that will be used and will be clear from the context. We denote by $\mathbb{H}^{-s}(\Omega)$ the corresponding dual space. 

For $u \in \mathbb{H}^s(\Omega)$, we have  
\begin{equation} \label{SpectralFracLap}
(-\Delta_{\mathcal{B}})^s u(x) : = \sum_{k=1}^\infty \lambda_k^s u_k \psi_k(x),
\end{equation}
as an element of $\mathbb{H}^{-s}(\Omega)$. Furthermore,  one has 
     $\|u \|_{\Hs(\Omega)} = \| (-\Delta_\mathcal{B})^{\s / 2} u\|_{L^{2}(\Omega)}$ for all $u \in \mathbb{H}^s(\Omega)$.

\subsection{Finite element approximation}

Hereafter, for the sake of simplicity, we shall suppose that $\Omega \subset \mathbb{R}^d$ is a bounded open Lipschitz polytope. Given a quasi-uniform and shape regular triangulation  $\mathcal{T}_{h} = \{ K_{n} \}_{n=1}^{M_{h}}$ of the domain, where $K_{n}$, $n = 1, \dots, M_{h}$, are closed simplices
with mutually disjoint interiors, such that $\overline{\Omega} = \cup_{n=1}^{M_{h}} K_{n}$, let $\V_{h}$ be the subspace of $\mathcal{V}$, consisting of all continuous piecewise affine functions defined on $\mathcal{T}_h$:
\begin{equation} \label{FEMspace}
    \mathcal{V}_{h} := \{ v_{h} \in C(\overline{\Omega}) \cap \mathcal{V} \textrm{ such that } v_{h}\big|_{K} \in \mathbb{P}^{1} \textrm{ for all } K \in \mathcal{T}_{h} \},
\end{equation}
and let $N_{h}$ be its dimension, $\dim \V_{h} =: N_{h}$. For this space, it is possible to define a nodal basis, which we denote by $\{ \phi_{i} \}_{i = 1}^{N_{h}}$, such that $\phi_{i}(P_{j}) = \delta_{i,j}$, where $\{ P_{j} \}_{j=1}^{N_{h}}$ are the mesh points (vertices) of $\mathcal{T}_{h}$ contained in $\Omega$ in the case of a homogeneous Dirichlet boundary condition, and in $\overline{\Omega}$ in the case of a homogeneous Neumann boundary condition. Moreover let $\mathcal{I}_{h}: \mathbb{R}^{N_{h}} \to \mathcal{V}_{h}$ be the interpolation operator defined as 
\begin{equation} \label{InterpOp} \mathcal{I}_{h} V = \sum_{i=1}^{N_{h}} V_{i} \phi_{i}, \quad \text{for } V = (V_{1}, \dots, V_{N_{h}})^{\mathrm{T}} \in \mathbb{R}^{N_{h}}, \end{equation}
which takes a vector $V$ in $\mathbb{R}^{N_{h}}$ and maps it into the function $\mathcal{I}_{h} V$ in $\mathcal{V}_{h}$.

Let us consider the eigenvalue problem for the Laplacian, stated in its weak form: 
\begin{equation} \label{EigenWeakFormMain} 
\text{Find } \varphi \in \V \setminus \{0\} \text{ and } \lambda \in \mathbb{R} \quad \text{such that} \quad (\nabla \varphi, \nabla v) = \lambda (\varphi, v) \quad \textrm{for all } v \in \V.\end{equation}

The finite element approximation of this eigenvalue problem is obtained by considering the restriction of the weak formulation (\ref{EigenWeakFormMain}) to the finite element space $\V_{h}$:
\begin{equation} \label{EigenProb1} \text{Find } \varphi_{h}  \in \V_{h}\setminus \{0\} \text{ and } \lambda^{h} \in \mathbb{R} \quad \text{such that} \quad(\nabla \varphi_{h}, \nabla v_{h}) = \lambda^{h} (\varphi_{h}, v_{h}) \quad \textrm{for all } v_{h} \in \V_{h}. \end{equation}

We define the \textit{discrete Laplacian} $ L_{h} : \V_{h} \rightarrow \V_{h}$
by the identity $(L_{h} w_{h}, v_{h}) := (\nabla w_{h}, \nabla v_{h})$ for all $v_{h} \in \V_{h}$. The linear operator $L_{h}$ is positive and selfadjoint, and has a set of  eigenvalues $0 <  \lambda_1^h \leq \lambda_2^h \leq \cdots \leq \lambda_{N_{h}}$, with corresponding eigenfunctions $\{ \varphi_{k}^{h} \}_{k=1}^{N_{h}}$, which form an orthonormal basis (with respect to the inner product $(\cdot,\cdot)$) for the space $\V_{h}$. This allows us to define a discrete version of the spectral fractional Laplacian
\begin{equation} \label{FracLapFE}
(-\Delta_{h})^{\s} u_{h} = \sum_{k = 1}^{N_{h}} (\lambda_{k}^{h})^{\s} u^{h}_{k}\varphi _{k}^{h}, \quad \textrm{where } u_{k}^{h} = \int_{\Omega} u_{h} \varphi_{k}^{h} \dx \quad \mbox{and} \quad s \in (0,1).
\end{equation}
With these definitions in place, the finite element approximation of the fractional Poisson equation is as follows:  
\begin{equation} \label{weakproblem} 
\textrm{Find } u_{h} \in \V_{h} \quad \textrm{such that} \quad ((-\Delta_{h})^{\s} u_{h}, v_{h}) = (f, v_{h}) \quad \textrm{ for all } v_{h} \in \V_{h}. 
\end{equation}
This can be rewritten in the following equivalent form: 
\[ \textrm{Find } u_{h} \in \V_{h} \quad \textrm{such that} \quad  \sum_{k=1}^{N_{h}} (\lambda_{k}^{h})^{\s} u^{h}_{k} (\varphi_{k}^{h}, v_{h}) = (f, v_{h}) \quad \textrm{ for all } v_{h} \in \V_{h}. \]
Since $\{ \varphi_{j}^{h} \}_{j=1}^{N_{h}}$ is an orthonormal basis for $\V_{h}$ with respect to the inner product $(\cdot,\cdot)$, we have that
\begin{equation} \label{EigenEq1} 
(\lambda_{j}^{h})^{\s} u^{h}_{j}   = (f, \varphi_{j}^{h}) \quad \textrm{ for all } j = 1, \dots, N_{h}. 
\end{equation}
Writing (\ref{EigenEq1}) as a linear system gives
\[ \Lambda^{\s} \widetilde{U} = \widetilde{F}, \quad \textrm{with} \quad \widetilde{U} = (u^{h}_{1}, \dots,  u^{h}_{N_{h}})^{\mathrm{T}}, \quad \widetilde{F} = ((f, \varphi_{1}^{h}), \dots, (f, \varphi_{N_{h}}^{h}))^{\mathrm{T}}, \quad \Lambda = \text{diag}(\lambda_{1}, \dots, \lambda_{N_{h}}). \]

Let $X$ be the matrix of change of variables between $\{ \varphi_{j}^{h} \}_{j =1}^{N_{h}}$ and the usual finite element basis $\{ \phi_{i} \}_{i =1}^{N_{h}}$ such that $\widetilde{U} = X U$ and $\widetilde{F} = X F$.
Then the linear system can be rewritten as 
\begin{equation} \label{EigenEq2}
X^{-1} \Lambda^{\s} X U = F.
\end{equation}
\begin{lemma} \label{EigenProbLemma}
Let $\{ \phi_{i} \}_{i = 1}^{ N_{h}}$ be the nodal basis of the space $\mathcal{V}_{h}$ defined in \eqref{FEMspace}, and let $\mathcal{S}$ and $\mathcal{M}$ be the stiffness matrix and mass matrix, respectively, defined as follows: $\mathcal{S}_{i,j} = (\nabla \phi_{i}, \nabla \phi_{j})$, $\mathcal{M}_{i,j} = (\phi_{i}, \phi_{j})$, $i, j = 1, \dots, N_h$. The matrices $\mathcal{S}$ and $\mathcal{M}$ are symmetric and positive definite, and the matrix $X^{-1} \Lambda^{s} X$ in \eqref{EigenEq2} is the $s$-th power of the matrix $A=\mathcal{M}^{-1} \mathcal{S}$.
The eigenvalues of the matrix $A$ are real and positive.
\begin{proof}
The symmetry of the matrices $\mathcal{S}$ and $\mathcal{M}$ is obvious. That these matrices are positive definite is also easy to prove: suppose that $\xi \in \mathbb{R}^{N_h}\setminus\{0\}$ and let $v(x):= \sum_{i=1}^{N_h} \xi_i \phi_i(x)$ for $x \in \overline\Omega$. Then, $\xi^{\mathrm{T}} \mathcal{S} \xi = \|\nabla v\|^2_{L^2(\Omega)}$. Thanks to the classical Poincar\'{e} inequality in $H^1_0(\Omega)$ in the case of $\mathcal{V}_h \subset H^1_0(\Omega)$ and by the Poincar\'{e}--Wirtinger inequality in $H^1_\ast(\Omega)$ in the case of
$\mathcal{V}_h \subset H^1_\ast(\Omega)$, there exists a positive constant $C=C(\Omega,d)$ such that $\|\nabla v\|^2_{L^2(\Omega)} \geq C\|v\|^2_{L^2(\Omega)} > 0$, because $\xi \neq 0$ and the functions $\{\phi\}_{i=1}^{N_h}$ are linearly independent. Hence $\mathcal{S}$ is positive definite. The positive definiteness of the matrix $\mathcal{M}$
follows even more directly: $\xi^{\mathrm{T}}\mathcal{M}\xi = \|v\|^2_{L^2(\Omega)}>0$, again because $\xi \neq 0$ and the functions $\{\phi\}_{i=1}^{N_h}$ are linearly independent. In particular, $\mathcal{S}$ and $\mathcal{M}$ are invertible matrices. 

Let us restate (\ref{EigenProb1}) by expressing the eigenfunction $\varphi_h$ in terms of the nodal basis: $\varphi_{h} = \sum_{i=1}^{N_{h}} 
(\varphi_{h})_{i} \phi_{i}$, with $(\varphi_h)_i \in \mathbb{R}$ for $i=1,\dots, N_h$. The eigenproblem reduces to  the equality $\sum_{i=1}^{N_{h}} (\nabla \phi_{i}, \nabla \phi_{j}) (\varphi_{h})_{i} = \lambda^{h} \sum_{i=1}^{N_{h}} (\phi_{i}, \phi_{j}) (\varphi_{h})_{i}$, 
which can be rewritten in matrix form as the generalized algebraic  eigenvalue problem 
\begin{equation} \label{EigenEq3}
    \mathcal{S} \Phi = \lambda^{h} \mathcal{M} \Phi, 
\end{equation}
where $\Phi = ((\varphi_{h})_{1}, \dots,  (\varphi_{h})_{N_{h}})^{\mathrm{T}} \in \mathbb{R}^{N_h}$. Equation (\ref{EigenEq3}) implies that $\lambda^h_j$ is an eigenvalue of the matrix $\mathcal{M}^{-1}\mathcal{S}$
with associated eigenvector $\Phi_j$, $j=1,\dots,N_h$. Therefore, the matrix $X$ introduced prior to equation \eqref{EigenEq2} has the form $X=[\Phi_1, \dots, \Phi_{N_h}] \in \mathbb{R}^{N_h \times N_h}$. Consequently, the matrix $X^{-1} \Lambda^{s} X$ is the $s$-th power of $A=\mathcal{M}^{-1}\mathcal{S}$. Finally, because the symmetric matrices $\mathcal{S} \in \mathbb{R}^{N_h \times N_h}$ and $\mathcal{M}\in \mathbb{R}^{N_h \times N_h}$ are positive definite, it follows that the eigenvalues $\lambda^h$ of the matrix $A$ are real and positive. 
\end{proof}
\end{lemma}

\subsection{Rational approximation and fractional powers of a matrix} According to 
Lemma \ref{EigenProbLemma} our finite element approximation of the fractional Poisson equation involves the computation of the power of a nonsingular matrix $A$. The use of this method tends to be avoided in practice because the computational cost to obtain an approximation of the power of a matrix is usually high for large matrices, and requires knowledge of the spectral properties of the matrix itself: as the dimension of the matrix increases, one needs to compute an increasing number of eigenvalues and eigenvectors to improve the accuracy of the method, which is, for very large matrices, practically infeasible.

The aim of our work in this context is to develop a computationally efficient procedure to approximate fractional powers of a nonsingular matrix $A$, with positive eigenvalues;
 more precisely, our focus is the approximation of the matrix $A^{-s}$, to directly solve the linear system that arises from the finite element approximation of the fractional Poisson equation. We address this challenge with the idea of considering the minimax algorithm introduced in \cite{NakatsukasaTrefethen2018}. This algorithm was developed to compute the best rational approximation of a given function in one variable, on a given interval, through adaptive barycentric representations. 

We briefly outline the method, and we refer to \cite{NakatsukasaTrefethen2018} for further details. Let 
\begin{equation} \label{RationalFuncs}
\mathcal{R}_{m,n} := \bigg \{ \frac{p}{q}: p \in \mathbb{P}_{m}[x], ~~ q \in \mathbb{P}_{n}[x]\bigg \}
\end{equation}
denote the set of rational functions of type $(m,n)$, where $\mathbb{P}_{n}[x]$ denotes the set of all polynomials in $x$ of degree at most $n$. Given a continuous real-valued function $g$ defined on a closed interval $[a,b] \subset \mathbb{R}$, the algorithm computes the solution of the problem 
\begin{equation} \label{BestRatApprox} \inf_{r \in \mathcal{R}_{m,n}} \| g - r \|_{\infty, [a,b]}, \end{equation} 
where $\| \cdot \|_{\infty, [a,b]}$ denotes the maximum norm over $[a, b]$, i.e., $\| g - r \|_{\infty, [a,b]} = \max_{x \in [a,b]}|g(x) - r(x)|$. We denote by $r_{m, n}^{\ast}$ the solution of the problem, called the \textit{best rational approximation}.

For the sake of simplicity, we shall restrict our attention here to the case $m = n$. Working in this setting will enable us in Section \ref{SectionError} 
to make direct use of a sharp approximation result due to Stahl \cite{stahl2003}, stated in Remark \ref{ErrEstRem}. 
We shall seek the best rational approximation $r$ of type $(n,n)$ to $g$ in its partial fraction representation, i.e., as
\begin{equation*}
r(x) = R_{0} + \frac{R_{1}}{ x- t_{1}} + \dots + \frac{R_{n}}{x - t_{n}},
\end{equation*}
where $t_{1}, \dots, t_{n}$ are the poles of $r$, and the numbers $R_0, R_1, \dots, R_n$ are referred to as the residues of $r$.
The minimax algorithm from \cite{NakatsukasaTrefethen2018} provides the user with the residues and poles of the best rational approximation. 

The method is then suitable for the approximation of the matrix function $M \in \mathbb{R}^{N \times N} \mapsto g(M) \in \mathbb{R}^{N \times N}$ using the corresponding expression
\begin{equation} \label{MatPowApprox}
r(M) = R_{0} I + R_{1} (M - t_{1}I)^{-1} + \dots + R_{n} (M - t_{n}I)^{-1}.
\end{equation}
We will approximate here $g(x) = x^{-s}$ by a rational function $r \in \mathcal{R}_{n,n}$, which will then provide the desired matrix approximation to the fractional power $A^{-s}$ of the nonsingular matrix $A$ with positive eigenvalues  under consideration.

\begin{remark}[Efficiency: computational cost]
Once we have the residues and poles of the rational approximation $r$, and thereby the rational approximation $r(A)$ of the matrix $A^{-s}$, it is then straightforward to compute an approximation to the vector $U=g(A)F$ in (\ref{EigenEq2}) as 
\begin{equation} \label{SolEigenEq}
r(A) F = R_{0} F + R_{1} (A - t_{1}I)^{-1} F + \cdots +   R_{n} (A - t_{n}I)^{-1} F.
\end{equation}
The terms $T_i:= R_i (A-t_iI)^{-1}F$ appearing on the right-hand side of  \eqref{SolEigenEq} can be efficiently computed by solving systems of linear algebraic equations of the form $(A-t_i I) T_i = R_i F$, $i=1,\ldots, n$.  In the present context, $A=\mathcal{M}^{-1}\mathcal{S}$ (cf. Lemma \ref{EigenProbLemma}).
From a computational perspective, we observe that applying mass lumping to the symmetric positive definite mass matrix $\mathcal{M}$, thus replacing $\mathcal{M}$ with a positive definite diagonal matrix $\hat{\mathcal{M}}$, results in a symmetric positive definite matrix $\hat{A}=\hat{\mathcal{M}}^{-1} \mathcal{S}$, which has the same sparsity structure as the stiffness matrix $\mathcal{S}$. As a result, the symmetric matrices $(\hat{A}-t_i I)$ retain the sparsity structure of $\mathcal{S}$, and this enhances the efficiency of solving the associated linear systems when using a finite element nodal basis. The degree $n$ of the rational approximation $r$ is typically a small number (common choices are $n=11, 12, 13$) to ensure sufficient accuracy. Thus, once the minimax algorithm has provided us with the residues and poles, the overall complexity of computing the approximation $r(A)F$ is $\mathcal{O}(n C(N))$, where $C(N)$ is the cost of solving the linear systems specified above.
\end{remark}

\begin{remark}[Efficiency: knowledge of the spectrum]
    We note that, thanks to the nature of the minimax algorithm developed in \cite{NakatsukasaTrefethen2018} to compute the best rational approximation to the function $x^{-s}$,  our method does not require the spectrum of the discrete Laplacian $-\Delta_{h}$. The algorithm only needs the endpoints $x_{min}, x_{max}$ of the interval $[x_{min}, x_{max}]$ on which the user aims to build the approximation. This is of significant benefit in computing $r(A)$,
    since we only require estimates of the smallest and largest eigenvalues of the matrix $A$, and these can be easily calculated using standard algorithms from numerical linear algebra. 
\end{remark}

\begin{remark}[Error estimate] \label{ErrEstRem}
     It was proved by Stahl \cite{stahl2003} that the error of best rational approximation exhibits root-exponential decay with respect to the degree of the rational approximation. More precisely, letting $r^{\ast}_{n,n}(x)$ denote the best rational approximation to $x^{s}$, $s > 0$, with degree $n \in \mathbb{N}$, Stahl proved that,   defining $ E_{n, n}(x^{s}, [0,1]) \coloneqq \| x^{s} -r^{\ast}_{n,n}(x) \|_{\infty,[0,1]} = \inf_{r \in \mathcal{R}_{n, n}} \| x^{s} - r(x) \|_{\infty,[0,1]}$, one has that 
\begin{equation} \label{StahlEstimate}
\lim_{n \to \infty} \mathrm{e}^{2 \pi \sqrt{s n}} E_{n, n}(x^{s}, [0,1]) = 4^{1+s} |\sin(\pi s)|. 
\end{equation}

\end{remark}

\subsection{Error analysis} \label{SectionError}

Our aim in this section is to provide an error estimate for the numerical method we have presented, and to validate the estimate through a series of numerical experiments. Let us consider the error $\|u^{\ast} - u_{h}\|_{L^{2}(\Omega)}$ in the $L^{2}(\Omega)$ norm, where $u^{\ast}$ is the exact solution to the fractional Poisson equation 
\[ (-\Delta_{\mathcal{B}})^{\s} u^{\ast}= f \quad \text{ in } \Omega\]
subject to a homogeneous Dirichlet or homogeneous Neumann boundary condition on $\partial \Omega$, 
and let $u_{h}$ be the numerical solution obtained by our method, based on combining the finite element approximation of $u^\ast$ with a rational approximation $r^\ast(A)$ of $A^{-s}$ where $A:=\mathcal{M}^{-1}\mathcal{S}$, i.e., 
\begin{equation} \label{BestRatSolEq} u_{h} := \mathcal{I}_{h} (r^{\ast}(\mathcal{M}^{-1}\mathcal{S}) F) = \mathcal{I}_{h} (r^{\ast}(A) F) ,
\end{equation}
where $\mathcal{I}_h$ is the interpolation operator defined in \eqref{InterpOp} and $F$ is the vector defined in \eqref{EigenEq2}. 
Since $r^{\ast}(A)$ is an approximation of the inverse power matrix $A^{-\s}$, we split the total error $u^{\ast} - u_{h}$ into two parts, given the two stages of our approximation to $u^\ast$; thus, 
\begin{equation} \label{boundmain}
\|u^{\ast} - u_{h}\|_{L^{2}(\Omega)} \leq \|u^{\ast} - \I_{h}(A^{-\s}F)\|_{L^{2}(\Omega)} + \|\I_{h}(A^{-\s}F) - \I_{h}(r^{\ast}(A)F)\|_{L^{2}(\Omega)} := E_{\textrm{FEM}} + E_{\textrm{MIM}}.
\end{equation}
The first term $E_{\textrm{FEM}}$ exhibits the order of convergence in the limit of $h \to 0_+$ of the finite element approximation $(-\Delta_h)^s$ to the fractional Laplacian, and is governed by \cite[Theorem 4.3]{bonito2015} and subsequent generalizations (see  \cite[Theorem 6.2]{bonito2017} and \cite{bonito2022}). 

Consider the second term $E_{\textrm{MIM}}$. Since $\I_{h}(A^{-\s}F) - \I_{h}(r^{\ast}(A)F) \in \mathcal{V}_h$, by norm-equivalence in finite-dimensional normed linear spaces, we can pass from the $L^{2}(\Omega)$ norm to the Euclidean 2-norm $\| \cdot \|_{2}$, up to a factor $C(h, d)$, depending on the mesh-size $h$ and the dimension $d$ of the computational domain $\Omega$. Therefore, we have 
\begin{align}
E_{\textrm{MIM}} &\leq C(h, d) \| A^{-\s}F - r^{\ast}(A) F\|_{2} \leq C(h, d) \|A^{-\s} - r^{\ast}(A)\|_{2} \; \|F\|_{2} \nonumber \\
& \leq C(h, d)  \|X \Lambda^{-\s} X^{-1} - X  r^{\ast}(\Lambda)X^{-1}\|_{2} \; \|F\|_{2} \leq C(h, d)   \| X\|_{2}  \|X^{-1}\|_{2} \| \Lambda^{-\s} - r^{\ast}(\Lambda)\|_{2} \; \|F\|_{2} \nonumber \\
 &= C(h, d) \kappa_{2}(X)  \left( \max_{1 \leq i \leq N_{h}} |(\lambda_{i}^{h})^{-\s} - r^{\ast}((\lambda_{i}^{h}))|  \right) \; \|F\|_{2}, \label{eq:ActualIntervalErrorMIM}
\end{align}
where $\kappa_2(X)=: \| X\|_{2}  \|X^{-1}\|_{2}$ is the condition number of the matrix $X$, and 
$\{ \lambda_{k}^{h} \}_{k=1}^{N_{h}}$ are the eigenvalues of the matrix $A = \mathcal{M}^{-1}\mathcal{A}$ (cf. Lemma \ref{EigenProbLemma}); 
when applied to a matrix, $\|\cdot\|_2$ signifies the matrix 2-norm. Since the discrete spectral fractional Laplacian $-\Delta_{h}$ is a positive self-adjoint operator, we have $0 < \lambda_1^h \leq \lambda_2^h \leq \cdots \leq \lambda_{N_{h}}$, and $N_{h} \to \infty$ as $h \to 0$. It is known that $\lambda_{1}^{h} \to \lambda_{1}$ as $h \to 0$. 

We shall now use the estimate \eqref{StahlEstimate} from Remark \ref{ErrEstRem}. By rescaling the computational domain $\Omega$, if necessary, one can assume without loss of generality that $\lambda_1>1$ and $\lambda_{1}^{h} > 1$ for all $h>0$; hence,  
\begin{align*}
    E_{\textrm{MIM}} &\leq C(h, d) \kappa_{2}(X) \| x^{-s} - r^{\ast}_{n, n}(x) \|_{[1, \infty]} = C(h, d) \kappa_{2}(X) \inf_{r \in \mathcal{R}_{n,n}} \| x^{-s} - r(x)\|_{[1, \infty]}. 
\end{align*}
By using the change of variable $y = x^{-1}$ it then follows that  
\begin{align} \label{StahlEq1}
    E_{\textrm{MIM}} &\leq C(h, d) \kappa_{2}(X) \inf_{r \in \mathcal{R}_{n,n}} \| y^{s} - r(y^{-1}) \|_{[0, 1]} = C(h,d) \kappa_{2}(X) \inf_{r \in \mathcal{R}_{n,n}} \| y^{s} - \widehat{r}(y) \|_{[0,1]} \nonumber \\
    &= C(h, d) \kappa_{2}(X) \inf_{r \in \mathcal{R}_{n,n}} \| y^{s} - r(y)\|_{[0, 1]},
\end{align}
where in the last equality we have used the fact that $\widehat{r}(y):=r(y^{-1})$ is still a rational function of degree $m=n$ if $r$ is a rational function of degree $m=n$; 
indeed, thanks to the definition \eqref{RationalFuncs}, we have $ \{ \widehat{r}(y): r \in \mathcal{R}_{n,n} \}= \mathcal{R}_{n,n}$. 
Using \eqref{StahlEstimate} we finally have that 
\begin{equation} \label{ErrorEstimate2}
    E_{\textrm{MIM}} \leq 4^{1+s} |\sin(\pi s)| C(h, d) \kappa_{2}(X)\, \mathrm{e} ^{-2 \pi \sqrt{sn}} .
\end{equation}

This result, together with the bound $E_{\mathrm{FEM}}$ on the error committed in the finite element approximation, gives us control of the $L^{2}(\Omega)$ norm error appearing on the left-hand side of the inequality \eqref{boundmain}. The user can thereby prescribe the degree $n$ of the rational approximation and the mesh size $h$ to achieve the desired overall accuracy. Once the mesh size $h$ is fixed, thanks to the root-exponential decay with respect to $n$ of the final factor on the right-hand side of the inequality \eqref{ErrorEstimate2}, it is possible to choose the optimal degree $n$ for the rational approximation to obtain the same level of accuracy in $E_{\textrm{MIM}} $ as in $E_{\textrm{FEM}}$ and compensate for the presence of the factor $C(h, d)$. 

We now test numerically the convergence orders of the two sources of error in \eqref{boundmain}, $E_{\textrm{FEM}}$ and $E_{\textrm{MIM}}$, with our convergence results reported in Figures \ref{FigConv1}, \ref{FigConv2}, \ref{FigConv3}. In particular, in Figures \ref{FigConv1} and \ref{FigConv2} we assess in one and two space dimensions, respectively, the order of convergence of our method as $h \to 0$, governed by the term $E_{\textrm{FEM}}$. The derivation of a bound on the error $E_\textrm{FEM}$ has been studied in recent years, and its rate of convergence in the limit of $h \to 0$ is known (see Theorem 4.3 and Remark 4.1 in \cite{bonito2015}). In particular for $f \in \mathbb{H}^{\frac{1}{2} - \epsilon}(\Omega)$ for small $\epsilon > 0$ 
the authors of \cite{bonito2015} proved that the FEM error $E_{\textrm{FEM}}$ is bounded above by $C_{h} h^{2r}$,
where 
\begin{equation} \label{eq:predictEFEM}
C_{h} = \left \{ \begin{array}{ll} C & \textrm{if } \s > \frac{3}{4}, \\ C \log(1/h) & \textrm{otherwise}, \end{array} \right. \qquad r = \left \{ \begin{array}{ll} 1 & \textrm{if } \s > \frac{3}{4}, \\ \s + \frac{1}{4}  & \textrm{otherwise}. \end{array} \right. 
\end{equation}

In Figure \ref{FigConv3} we validate our estimate of the error $E_{\textrm{MIM}}$ associated with the rational approximation and report the order of convergence with respect to the degree $n$ of the best rational approximation \eqref{BestRatApprox}. 

\begin{table}
\caption{The fractional Poisson equation (\ref{FracPoi}) in $\Omega$ with homogeneous Dirichlet boundary condition; study of the convergence order as a function of the mesh size $h$ for $f \equiv 1$. We compare the observed order of convergence with the predicted order of convergence, as stated in Theorem 4.3  and Remark 4.1 in \cite{bonito2015}. Here, by \textit{order of convergence} we mean the exponent $2r$ in $C_h h^{2r}$, ignoring the presence of the factor $\log(1/h)$ in $C_h$ when $s \leq 3/4$; cf. \eqref{eq:predictEFEM}.}
\label{Table1}
\begin{subtable}{\linewidth}
\[
\begin{array}{cccccccc|ccc}
\toprule
& \multicolumn{7}{c|}{\s < \frac{3}{4}} & \multicolumn{3}{c}{\s > \frac{3}{4}} \\ 
\midrule & 0.02 & 0.05 & 0.1 & 0.1\bar{6} & 0.25 & 0.5 & 0.\bar{6} & 0.8\bar{3} & 0.8571 & 0.9 \\
\midrule 
\textrm{1D $\Omega=(0,1)$} \\
\textrm{observed} & 0.5400 & 0.6000 & 0.6600 & 0.8334 & 1.0003 & 1.5033 & 1.7922 & 1.9227 & 2.0153 & 2.0664 \\
\midrule
\textrm{2D $\Omega=(0,1)^2$} \\
\textrm{observed} & 0.5214 & 0.5816 & 0.6821 & 0.8162 & 0.9843 & 1.4912 & 1.7988 & 1.9598 & 1.9752 & 1.9804 \\
\midrule 
\textrm{predicted} & 0.54 & 0.6 &  0.7 & 0.8\bar{3} & 1.0 & 1.5 & 1.8\bar{3} & 2.0  & 2.0 & 2.0 \\
\bottomrule
\end{array}
\]
\end{subtable} 
\end{table}

\begin{figure}[H]

\centering
	\begin{subfigure}{0.49\textwidth}	
\includegraphics[width=\textwidth]{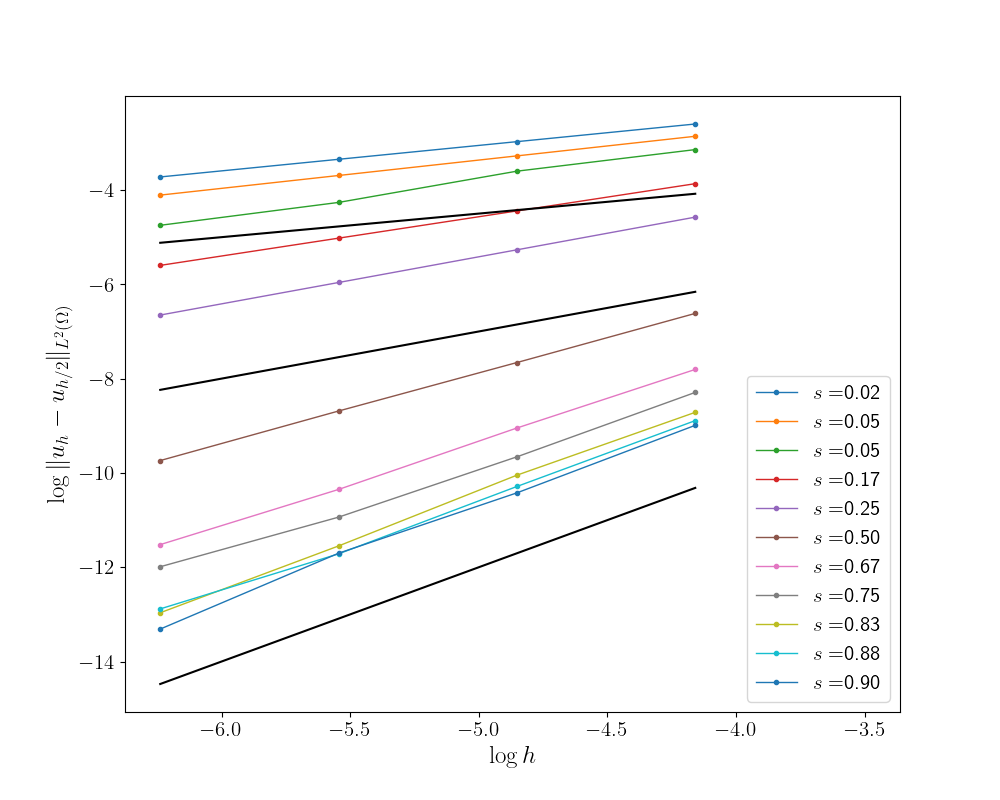}
\caption{One-dimensional case $\Omega=(0,1)$. Mesh sizes $h=1/2^{k}$ with $5 \leq k \leq 9$ were used. } \label{FigConv1}
\end{subfigure}
\hfill
\begin{subfigure}{0.49\textwidth}	
\includegraphics[width = \textwidth]{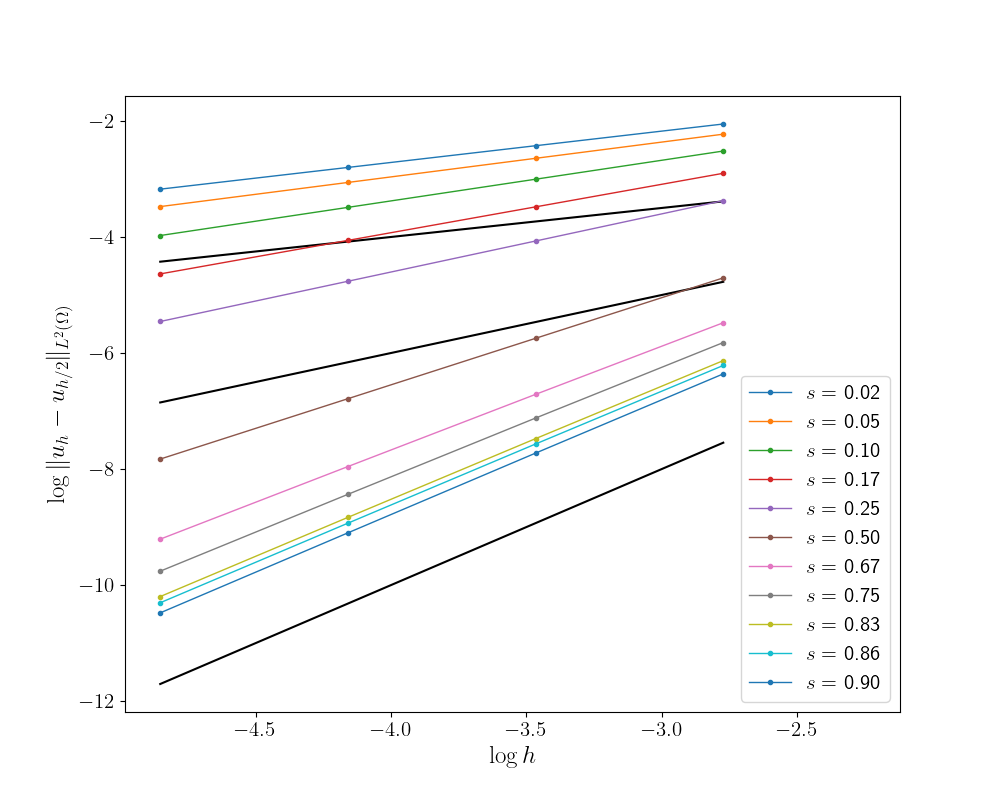}
\caption{Two-dimensional case $\Omega=(0,1)^2$. Mesh sizes $h=1/2^{k}$ with $3 \leq k \leq 7$ were used.}
\label{FigConv2}
\end{subfigure}
\caption{The fractional Poisson equation (\ref{FracPoi}) in $\Omega$ with homogeneous Dirichlet boundary condition; study of the convergence order as a function of the mesh size $h$. We report the convergence of the scheme for $f\equiv 1$. The black reference lines have slope, respectively, 2, 1.5, and 1.}
\end{figure}

\begin{figure}
	\centering

	\begin{subfigure}{0.3\textwidth}			\includegraphics[width=\textwidth]{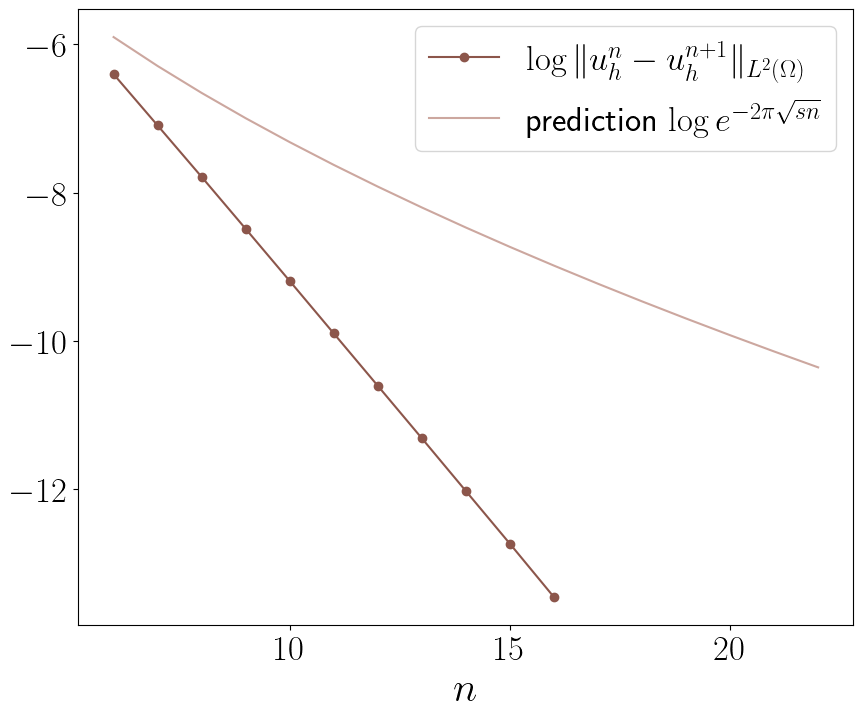}
		\caption{$s=0.1$}
	\end{subfigure}
	\hfill
	\begin{subfigure}{0.3\textwidth}			\includegraphics[width=\textwidth]{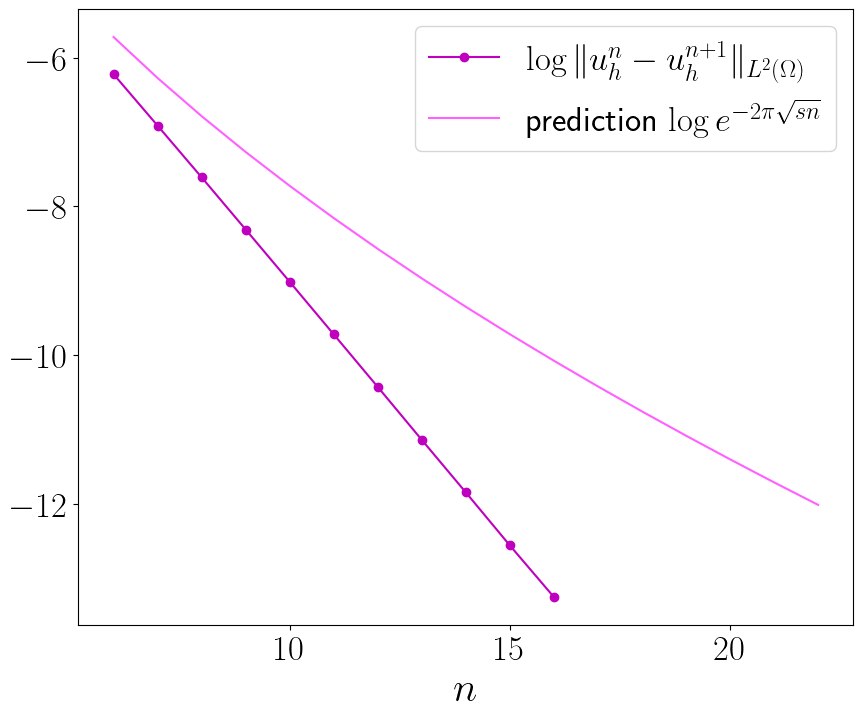}
		\caption{$s=0.2$}
	\end{subfigure}
        \hfill
 	\begin{subfigure}{0.3\textwidth}			\includegraphics[width=\textwidth]{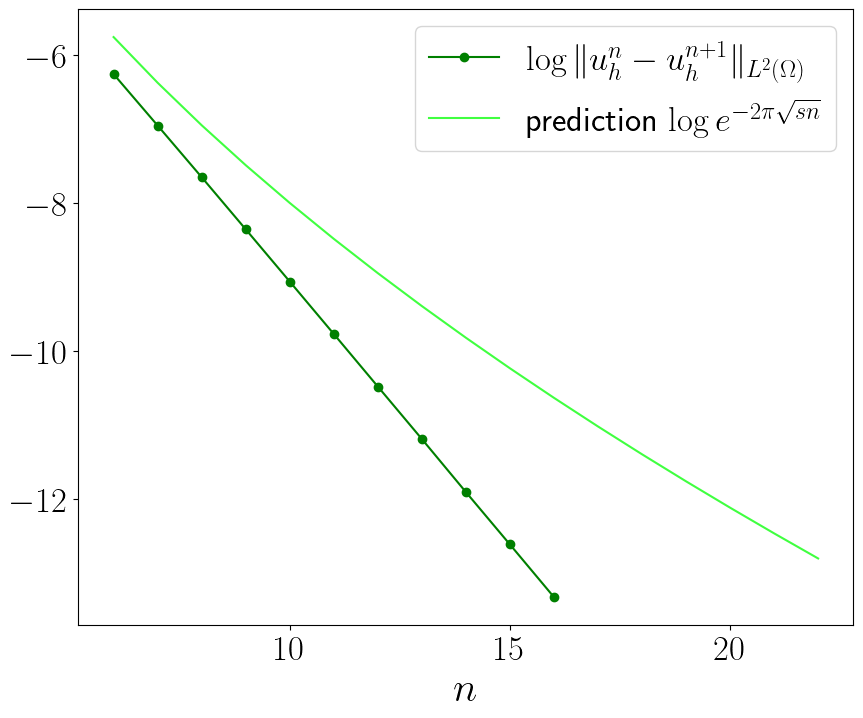}
		\caption{$s=0.25$}
	\end{subfigure}

	\begin{subfigure}{0.3\textwidth}			\includegraphics[width=\textwidth]{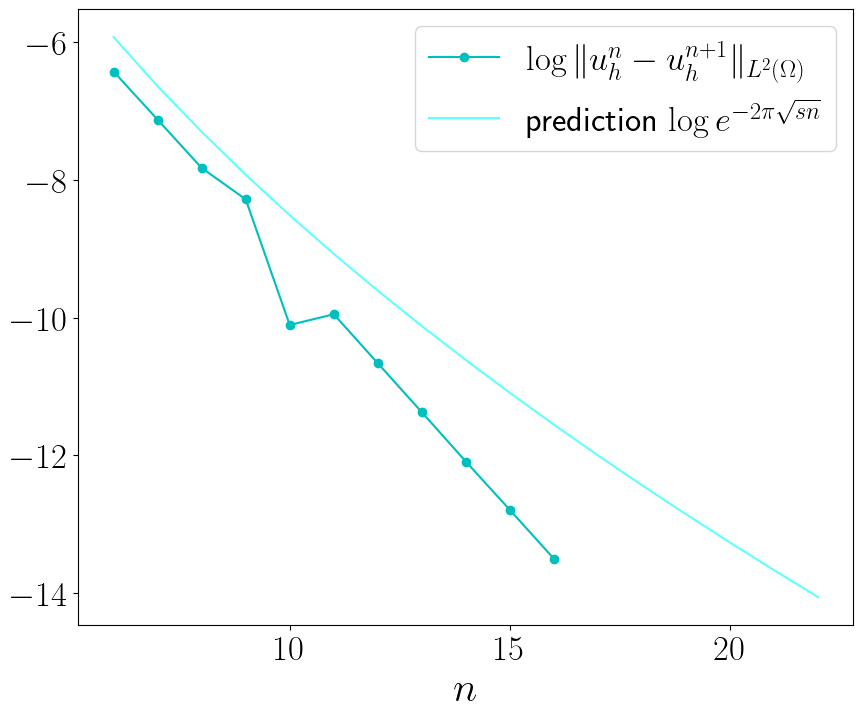}
		\caption{$s=0.\overline{3}$}
	\end{subfigure}
        \hfill
  	\begin{subfigure}{0.3\textwidth}			\includegraphics[width=\textwidth]{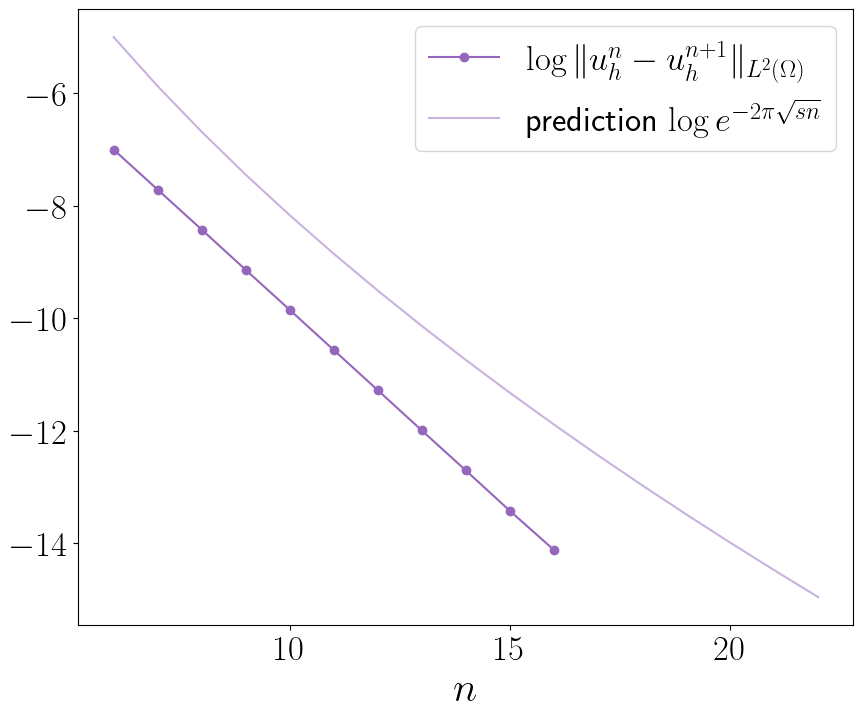}
		\caption{$s=0.5$}
	\end{subfigure}
	\hfill
  	\begin{subfigure}{0.3\textwidth}			\includegraphics[width=\textwidth]{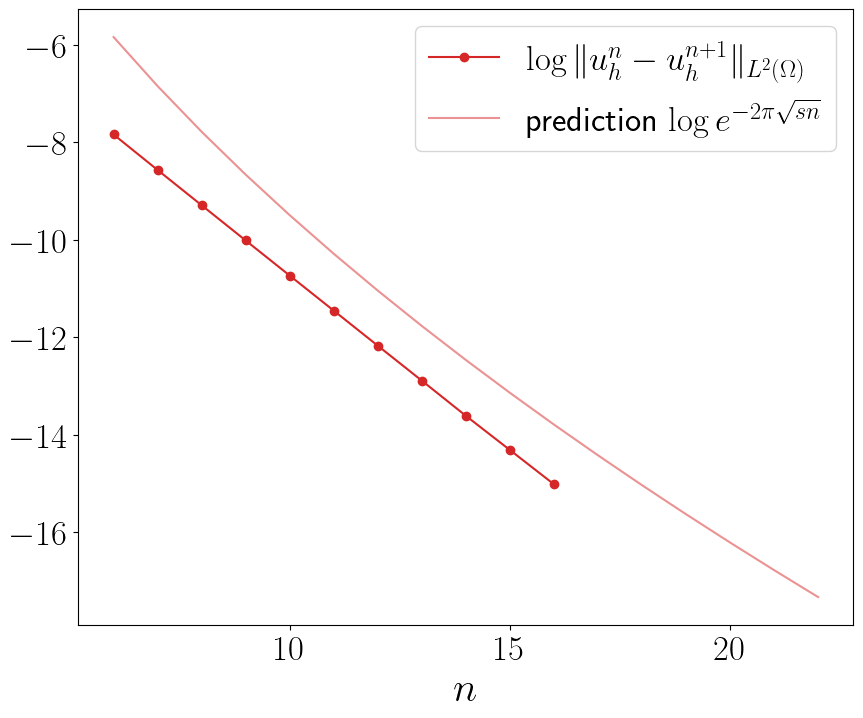}
		\caption{$s=0.\overline{6}$}
	\end{subfigure}

  	\begin{subfigure}{0.3\textwidth}			\includegraphics[width=\textwidth]{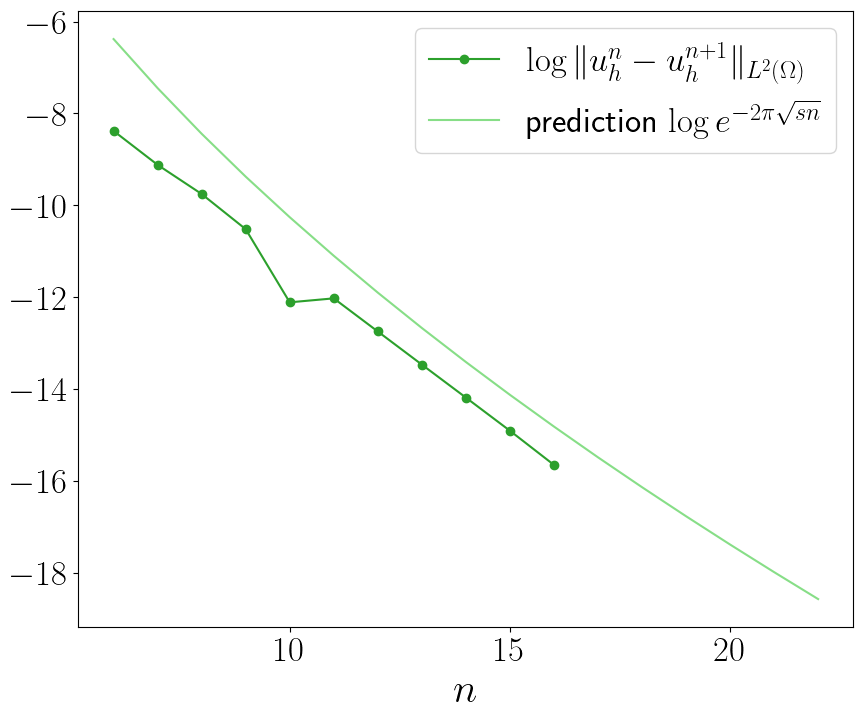}
		\caption{$s=0.75$}
	\end{subfigure}
	\hfill
  	\begin{subfigure}{0.3\textwidth}			\includegraphics[width=\textwidth]{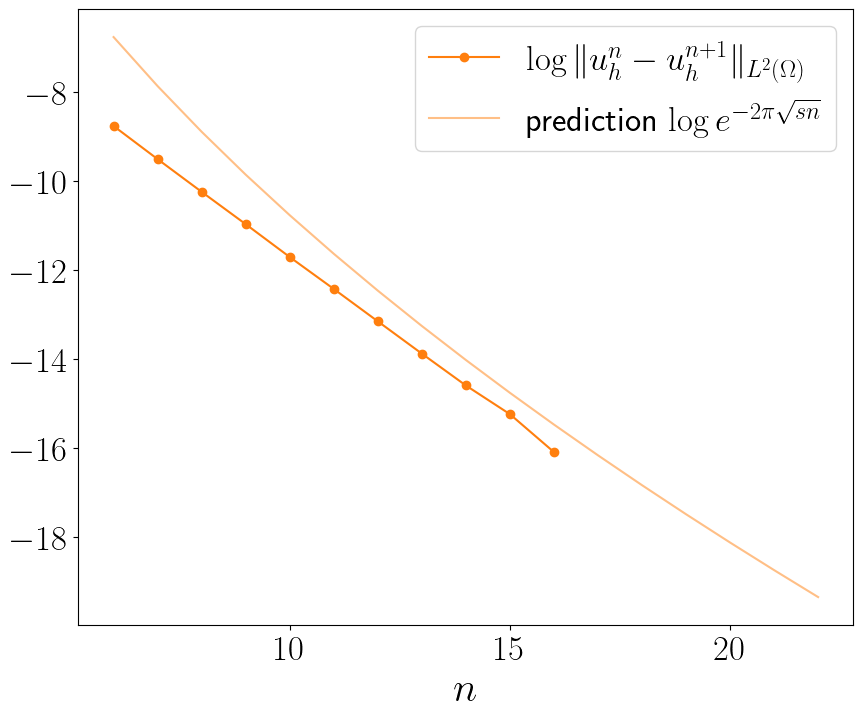}
		\caption{$s=0.8$}
	\end{subfigure}
        \hfill
 	\begin{subfigure}{0.3\textwidth}			\includegraphics[width=\textwidth]{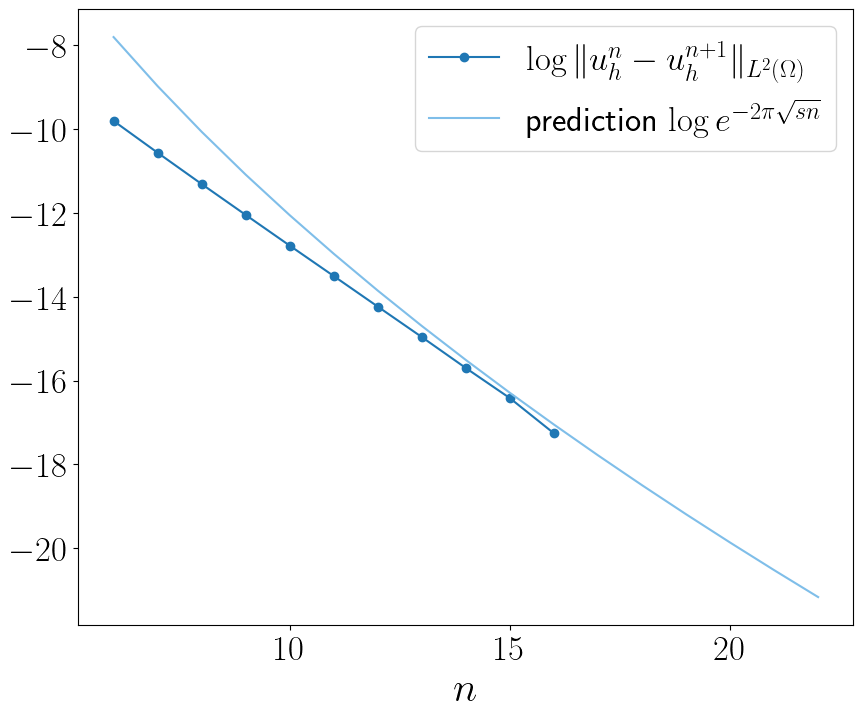}
		\caption{$s=0.9$}
	\end{subfigure}
\caption{The fractional Poisson equation: the observed exponential converge of the best rational approximation \eqref{BestRatApprox}, used in \eqref{SolEigenEq}. The computation of the function $u_h$ defined in  \eqref{BestRatSolEq} was performed for different fractional orders to assess the error as a function of the degree $n$ of rational approximation, in comparison with the theoretical prediction \eqref{ErrorEstimate2}. Each solution was computed on $\Omega=(0,1)^2$ with $f(x,y)=\sin(\pi x) \sin(\pi y)$, and with a mesh of $2^8 \times 2^8$ equally spaced mesh point. }
\label{FigConv3}
\end{figure}

We observe that, with the mesh size $h>0$ fixed, the order of convergence in the limit of $n \to \infty$ is actually exponential rather than root-exponential as predicted by 
\eqref{BestRatApprox}. 
This discrepancy is due to the difference in the domains over which the errors are evaluated. Specifically, the estimate 
\eqref{ErrorEstimate2} was based on bounding the sup-norm over the whole interval $y \in [0,1]$, while, as can be seen from \eqref{eq:ActualIntervalErrorMIM}, our bound on the actual error $E_{\textrm{MIM}}$ only involves the sup-norm over the subinterval $y \in [(\lambda_{N_h}^h)^{-1}, (\lambda_{1}^h)^{-1}] \subset [0,1]$; crucially, the interval 
$[(\lambda_{N_h}^h)^{-1}, (\lambda_{1}^h)^{-1}]$ excludes the `singular point' $y=0$, at which the function $y \in [0,1] \mapsto y^{s}$ has unbounded derivatives for all $s \in (0,1)$. This renders the minimax approximation a very accurate computational technique over the subinterval 
$[(\lambda_{N_h}^h)^{-1}, (\lambda_{1}^h)^{-1}]$ and substantiates the common choice of relatively low values of the degree $n$ of rational approximation, $n = 11, 12, 13$ being typical choices in practice. 
In Figure \ref{FigRootEXp} we numerically validate our reasoning for why exponential rather than slower root-exponential convergence is observed in the results reported in Figure \ref{FigConv3}. We compute rational approximations of the function $f(y) = y^s$ for different fractional orders $s \in (0,1)$ on intervals of the form $[\delta, 1]$ with $\delta>0$. As we can see in Figure \ref{FigRootEXp}, for all fractional orders, as $\delta$ decreases toward zero, the convergence in $n$ becomes slower, reaching the root-exponential convergence predicted by the estimate \eqref{StahlEstimate} for the interval $[0,1]$.

\begin{figure}
\centering
\begin{subfigure}{0.49 \textwidth}
\includegraphics[width = \textwidth]{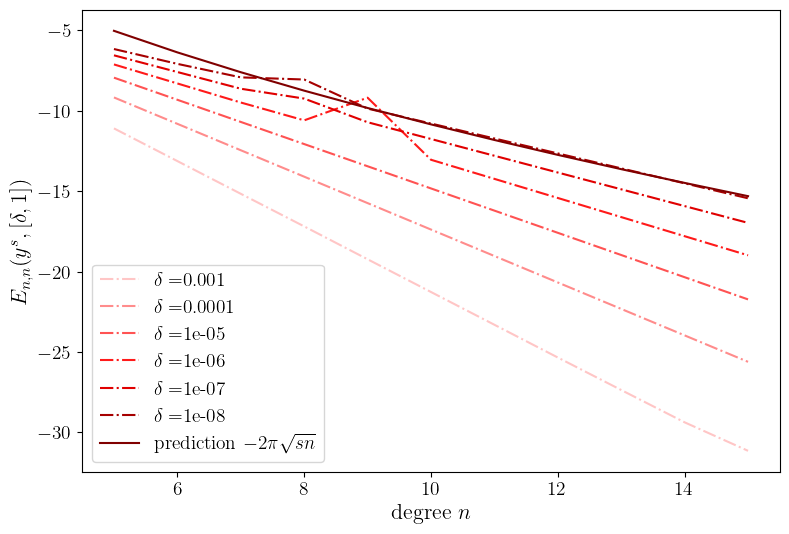}
\caption{$s = 0.2$}
\end{subfigure}
 \hfill
\begin{subfigure}{0.49 \textwidth}
\centering
\includegraphics[width = \textwidth]{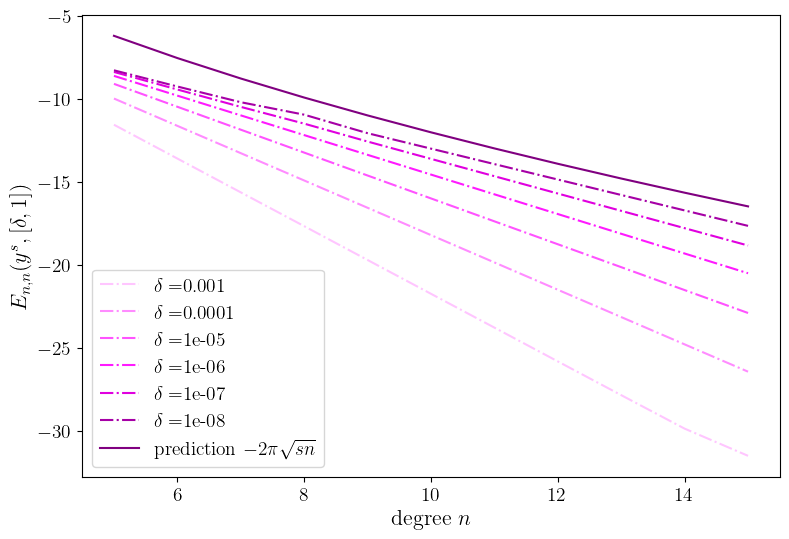}
\caption{$s=0.5$}
\end{subfigure}
\hfill
\begin{subfigure}{0.49 \textwidth}
\includegraphics[width = \textwidth]{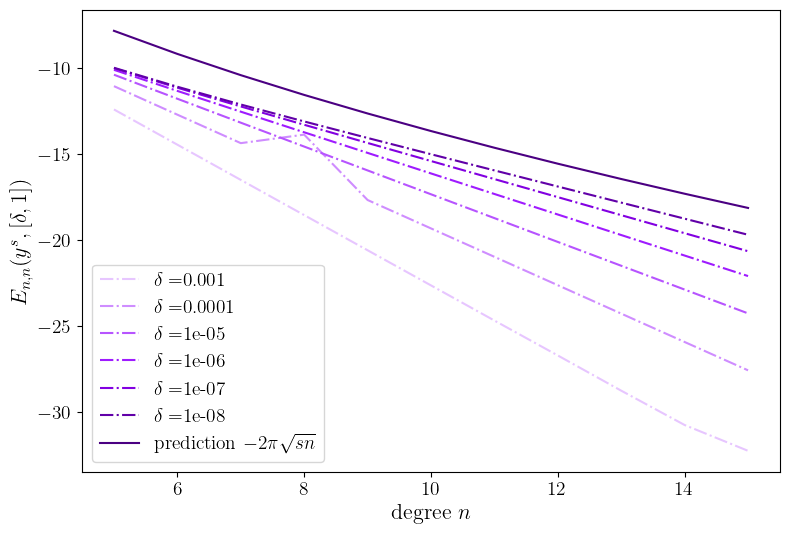}
\caption{$s = 0.7$}
\end{subfigure}
 \hfill
\begin{subfigure}{0.49 \textwidth}
\centering
\includegraphics[width = \textwidth]{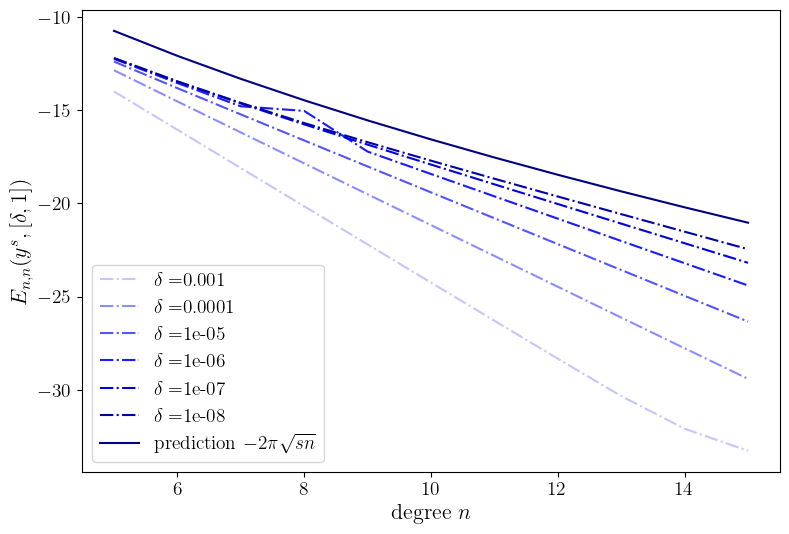}
\caption{$s=0.9$}
\end{subfigure}
\caption{Rational approximation of $f(y) = y^s$ computed through the minimax algorithm on intervals $[\delta, 1]$ for $\delta > 0$. Computation of the error $E_{n, n}(y^{s}, [\delta,1]) \coloneqq \| y^{s} -r^{\ast}_{n,n}(y) \|_{\infty,[\delta,1]} = \inf_{r \in \mathcal{R}_{n, n}} \| y^{s} - r(y) \|_{\infty,[\delta,1]}$ and comparison with the prediction \eqref{StahlEstimate} for $E_{n, n}(y^{s}, [0,1])$.}
\label{FigRootEXp}
\end{figure}

\section{Numerical experiments on related evolution problems}

We now present the results of applying the method described in Section 2 for the finite element approximation of the spectral fractional Laplacian to various fractional-order PDEs. We focus on two aggregation-diffusion problems: the fractional porous medium equation and the fractional Keller--Segel equation. These equations model the spatial and temporal distribution of a density function and are closely related. Both feature a potential term, which is the solution to a fractional Poisson equation. However, in the fractional porous medium equation, the potential is repulsive, promoting the dispersion of the density in space, whereas in the Keller--Segel model, the potential is attractive, leading to aggregation. Both phenomena are of significant interest and have numerous applications.

\subsection{The fractional porous medium equation}

The fractional porous medium equation arises in various scientific and engineering contexts, such as hydrology and subsurface flow, where it models anomalous diffusion in porous geological formations, where long-range interactions are significant, biology and ecology, where the equation is used to describe population dispersal with memory effects or spatial heterogeneity, and materials science, where it characterizes processes such as heat transfer in fractal media or heterogeneous materials. The equation has been studied in several papers; we refer the reader to \cite{Caffarelli2011, Caffarelli2015} for the first comprehensive contributions to the subject, and to \cite{Chen2022} for a more recent study. 

In \cite{carrillosuli2024} the authors presented an analysis of a finite element scheme for a parabolic regularization of the fractional porous medium equation, based on the spectral definition of the fractional Laplacian. We adopt here the same parabolic regularization as in \cite{carrillosuli2024} for the fractional porous medium equation, given by 
\begin{equation} \label{FullProbWSpace}
\frac{\partial \rho}{\partial t} = \sigma \Delta \rho - \nabla \cdot (\rho \nabla c), \quad  - (-\Delta)^{\s} c = \rho, \quad 0< s <1, \quad \sigma > 0.
\end{equation}
As $s \to 0$ the equation tends to a standard porous medium equation with Fickian diffusion.

In the first place, we want to validate our finite element approximation. We shall do so by studying the asymptotic decay towards the equilibrium solution of the following Fokker--Planck equation: 
\begin{equation} \label{PorousMediumFP}
\frac{\partial \rho}{\partial t} = \sigma \Delta \rho + \nabla \cdot (\rho \nabla \rho) + \nabla \cdot(x \rho)=
\nabla \cdot \left( \rho \nabla\left( F(\rho)+\frac{|x|^{2}}{2}\right)\right).
\end{equation}
An explicit steady-state solution to this equation can be computed on the whole of $\mathbb{R}^{d}$ as
\begin{equation} \label{FPStedayState}
    \rho^{\text{Std}}(x) = F^{-1}\bigg( -\frac{|x|^{2}}{2} + C\bigg), \quad \text{where } F(\psi) = \sigma \log(\psi) + \psi 
\end{equation}
and $C$ is a constant whose value is fixed by the mass of the positive initial datum. Denoting by $\rho_{h}$ the approximate steady-state solution that we compute using our finite element approximation, in Figure \ref{Fig1} we report the $L^{2}(\Omega)$ norm of the difference between $\rho^{\text{Std}}$ and $\rho_{h}$. We observe convergence up to the precision provided by the computational mesh. 

\begin{figure}
\centering
\includegraphics[width = 0.8\textwidth]{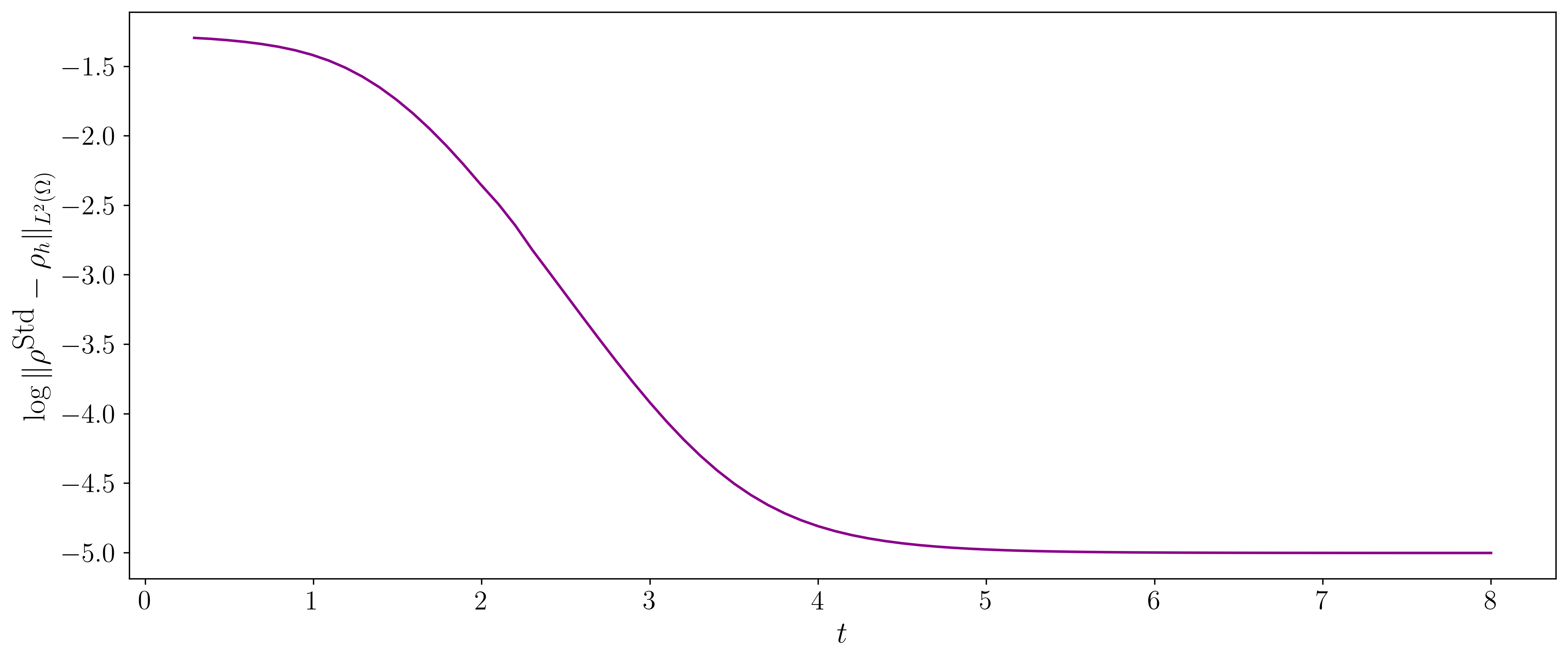}
\caption{The fractional porous medium equation \eqref{PorousMediumFP} in self-similar variables; convergence to the steady state \eqref{FPStedayState} for $s=2^{-10}$. The computational domain is $\Omega = B_{R}(0)$, $R=10$, $\sigma=1$, a mesh of 37267 uniformly spaced mesh points was used with $\Delta t = 0.01$.}
\label{Fig1}
\end{figure}

 Now, let us consider the fractional porous medium equation without the Fickian diffusion term, that is, for $\sigma=0$ in \eqref{FullProbWSpace}:
\begin{equation*}
    \frac{\partial \rho}{\partial t} = - \nabla \cdot (\rho \nabla c), \quad  - (-\Delta)^{\s} c = \rho, \quad 0< s <1, 
\end{equation*}
for which there exists a family of self-similar profiles; cf. \cite{BilerKarch2011, BilerKarch2015}. In order to further validate our method, we consider the solution of the corresponding Fokker--Planck equation
\begin{equation} \label{FractionalPorFP}
\frac{\partial \rho}{\partial t} = - \nabla \cdot (\rho \nabla c) + \mu \nabla \cdot (x \rho), \quad  - (-\Delta)^{\s} c = \rho, \quad 0< s <1, \quad \mu = \frac{1}{d+2-2s}
\end{equation}
that admits a steady state on the whole of $\mathbb{R}^{d}$, given by
\begin{equation} \label{SSFracPM}
\Phi_{s}(y) = k_{s, d} (1 - |y|^{2})^{1-s}_{+}, \quad \text{with } k_{s,d} = \frac{d \Gamma(d/2)}{(d + 2(1-s)) 2^{2(1-s)} \Gamma(2-s) \Gamma(d/2 + 1 s)}.
\end{equation} 
Next, we test our method by applying it to the equation
\begin{equation} \label{FractionalPorFPDiff}
\frac{\partial \rho}{\partial t} = \sigma \Delta \rho - \nabla \cdot (\rho \nabla c) + \mu \nabla \cdot (x \rho), \quad  - (-\Delta)^{\s} c = \rho, \quad 0< s <1, 
\end{equation}
for very small values of $\sigma$ and compare the numerically computed steady states that we compute with \eqref{SSFracPM}. In doing this, however, it needs to be borne in mind that \eqref{SSFracPM} does not belong to $H^{1}(\Omega)$, but only to $H^{1-s}(\Omega)$ with $s \in [0,1)$. This is reflected in our numerical results by the fact that, for a fixed mesh size, as $\sigma$ becomes smaller, our numerical solution develops oscillations in a small neighbourhood of the unit circle $\{y \in \mathbb{R}^{d}, |y| = 1 \}$, where the $H^{1}$ regularity of the solution is lost. This is expected behaviour, since we are using a finite element method that relies on the presence of the parabolic regularization term $\sigma \Delta \rho$. In Figure \ref{Fig2} we report the results for selected values of $\sigma$. While we observe local oscillations developing as $\sigma$ decreases, we also see that away from the unit circle the profile of the numerical solution approaches the analytical solution profile \eqref{SSFracPM}. This example also highlights that our method admits the use of locally refined computational meshes; indeed, the local oscillations are mitigated by local mesh refinement in the vicinity of the unit circle. 

\begin{figure}
	\centering

	\begin{subfigure}{0.49\textwidth}			\includegraphics[width=\textwidth]{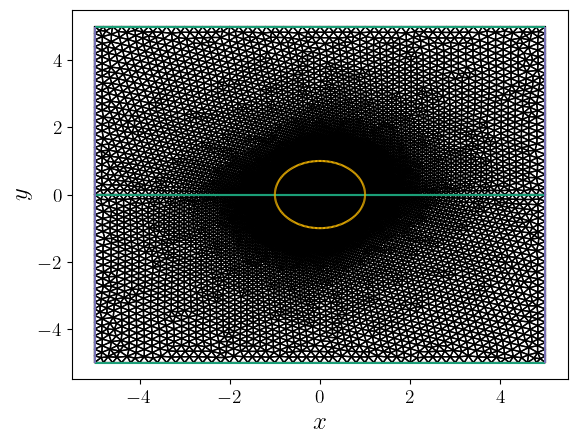}
		\caption{}
	\end{subfigure}
	\hfill
	\begin{subfigure}{0.49\textwidth}
		\includegraphics[width=\textwidth]{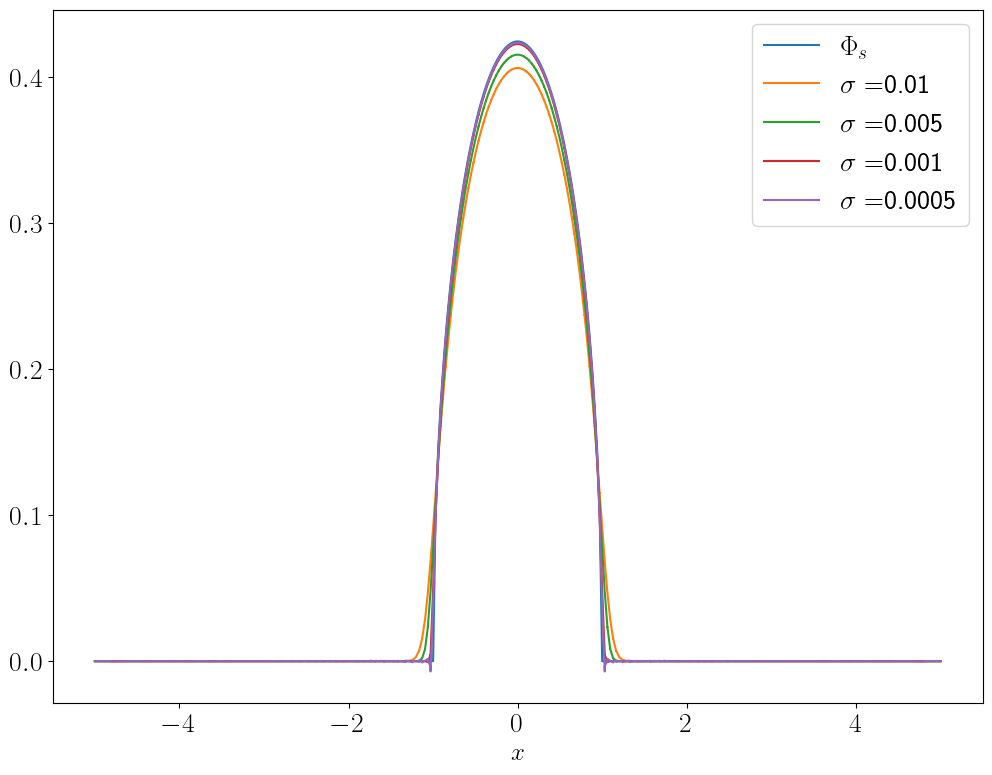}
		\caption{}
	\end{subfigure}

\begin{subfigure}{0.7\textwidth}
		\includegraphics[width=\textwidth]{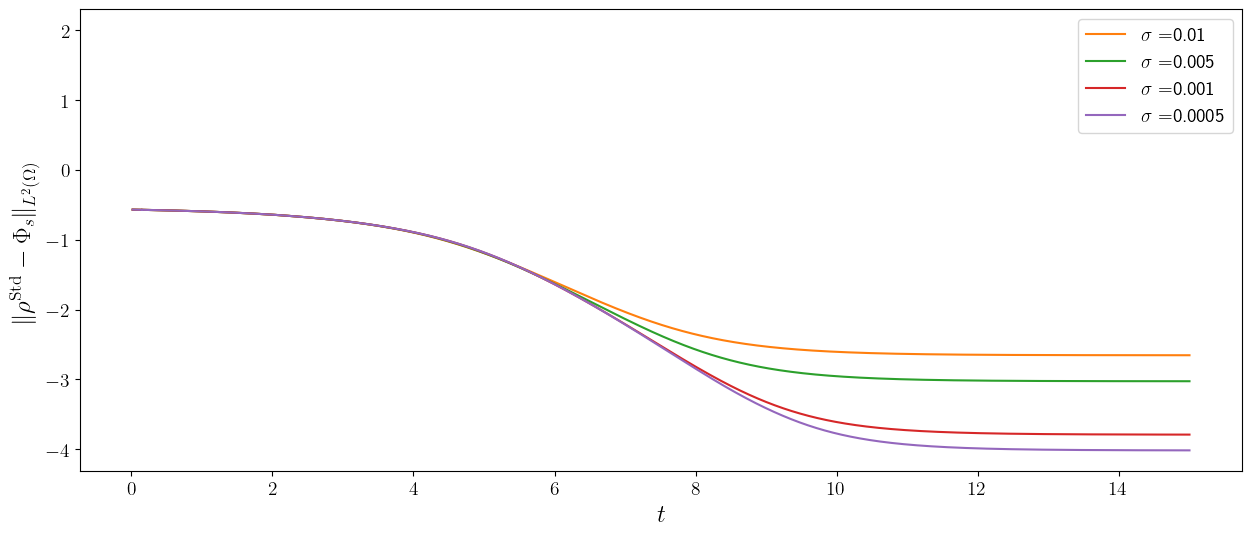}
		\caption{}
	\end{subfigure}

	\caption{The fractional porous medium equation in self-similar variables \eqref{FractionalPorFPDiff}; (a) the locally refined mesh on the computational domain $\Omega$; (b) steady state solution profile for equation \eqref{FractionalPorFPDiff} for different values of $\sigma$; (c) the  $L^{2}(\Omega)$ error between the computed steady state of \eqref{FractionalPorFPDiff} and the analytical steady state $\Phi_s$. The computational domain is $\Omega = (-5,5)^2$, $\rho_0(x) \propto \frac{1}{|\Omega|}$, a mesh  with 13171 mesh points  was used, graded towards the centre of the domain, with the time step taken as $\Delta t = 0.01$.}
	\label{Fig2}
\end{figure}
We shall now explore the behaviour of solutions to \eqref{FractionalPorFPDiff} with different values of the fractional order $s \in (0,1)$. We report the results in Figure \ref{FigProfilesSS}. We observe generic solutions to \eqref{FractionalPorFP} that exhibit the correct qualitative behavior of spreading; see \cite{Caffarelli2011,Caffarelli2015}. The solutions are expected to spread according to the self-similar profile stated in equation \eqref{SSFracPM}. The 2D simulations reported in Figure \ref{FigProfilesSS} show that, as $s\to 0$, the spread becomes more pronounced.
\begin{figure} 
\begin{subfigure}{0.6\textwidth}
\centering
\includegraphics[width = \textwidth]{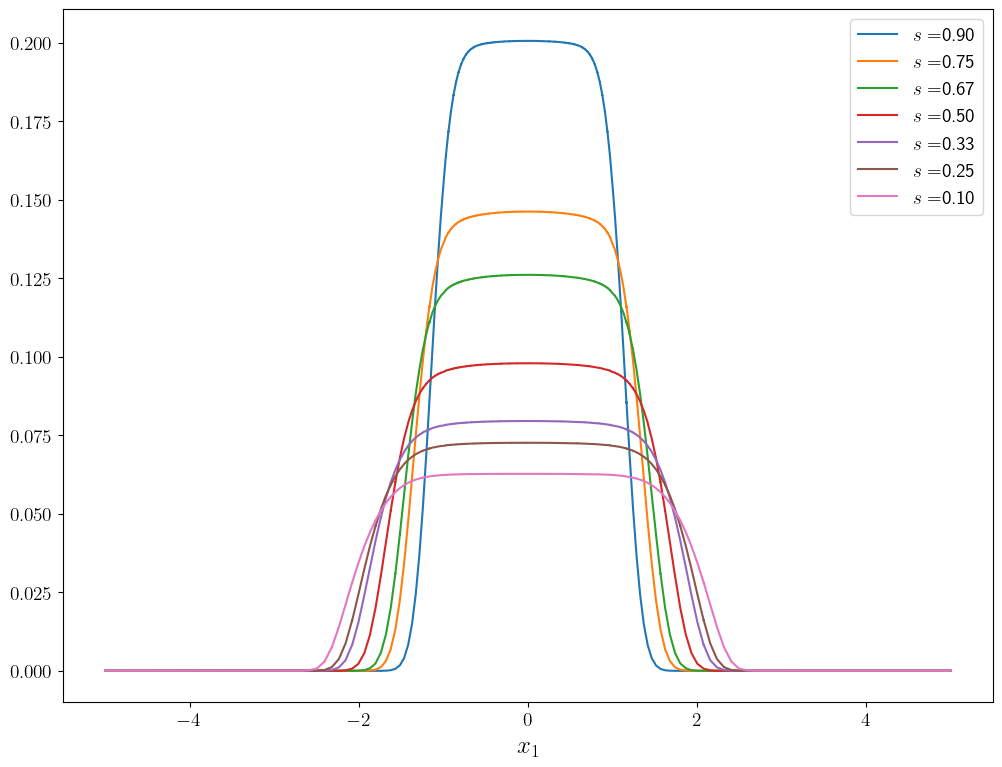}
\caption{}
\end{subfigure}
\centering
\begin{subfigure}{0.7\textwidth}
\includegraphics[width = \textwidth]{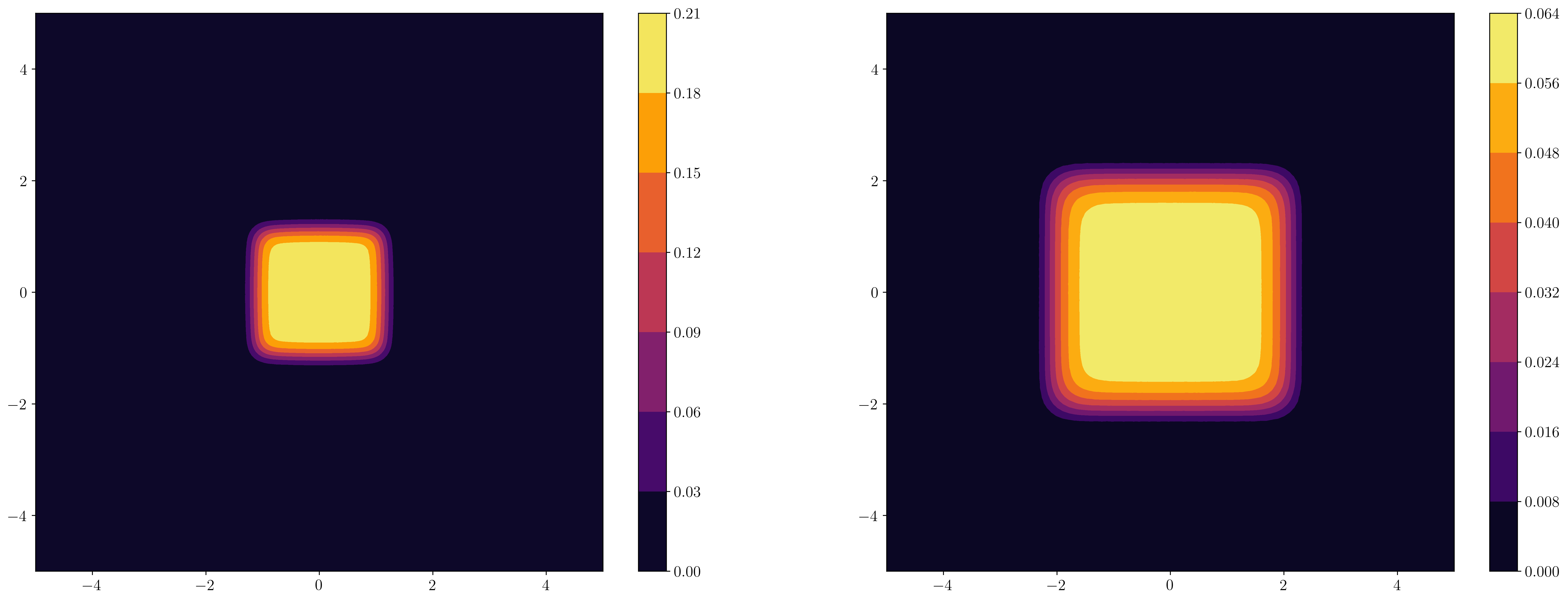}
\caption{}
\end{subfigure}
\caption{Steady state profiles of the fractional porous medium  equation \eqref{FractionalPorFPDiff} in self-similar variables with $\sigma = 0.01$ for different values of $s \in (0,1)$. The computational domain is  $\Omega = (-5,5)^2$,  $\rho_0(x) \propto \frac{1}{|\Omega|}$; a graded mesh towards the centre of the domain, with 13171 mesh points, was used with the time step taken as $\Delta t = 0.01$. (a): cross-section of the numerical solution at $x_2=0$ as a function of $x_1$ for different values of $s$; (b): contour plot for values of $s= 0.9$ (left) and $s=0.1$ (right).}
\label{FigProfilesSS}
\end{figure}

\begin{figure}
\centering
\includegraphics[width = 0.7  \textwidth]{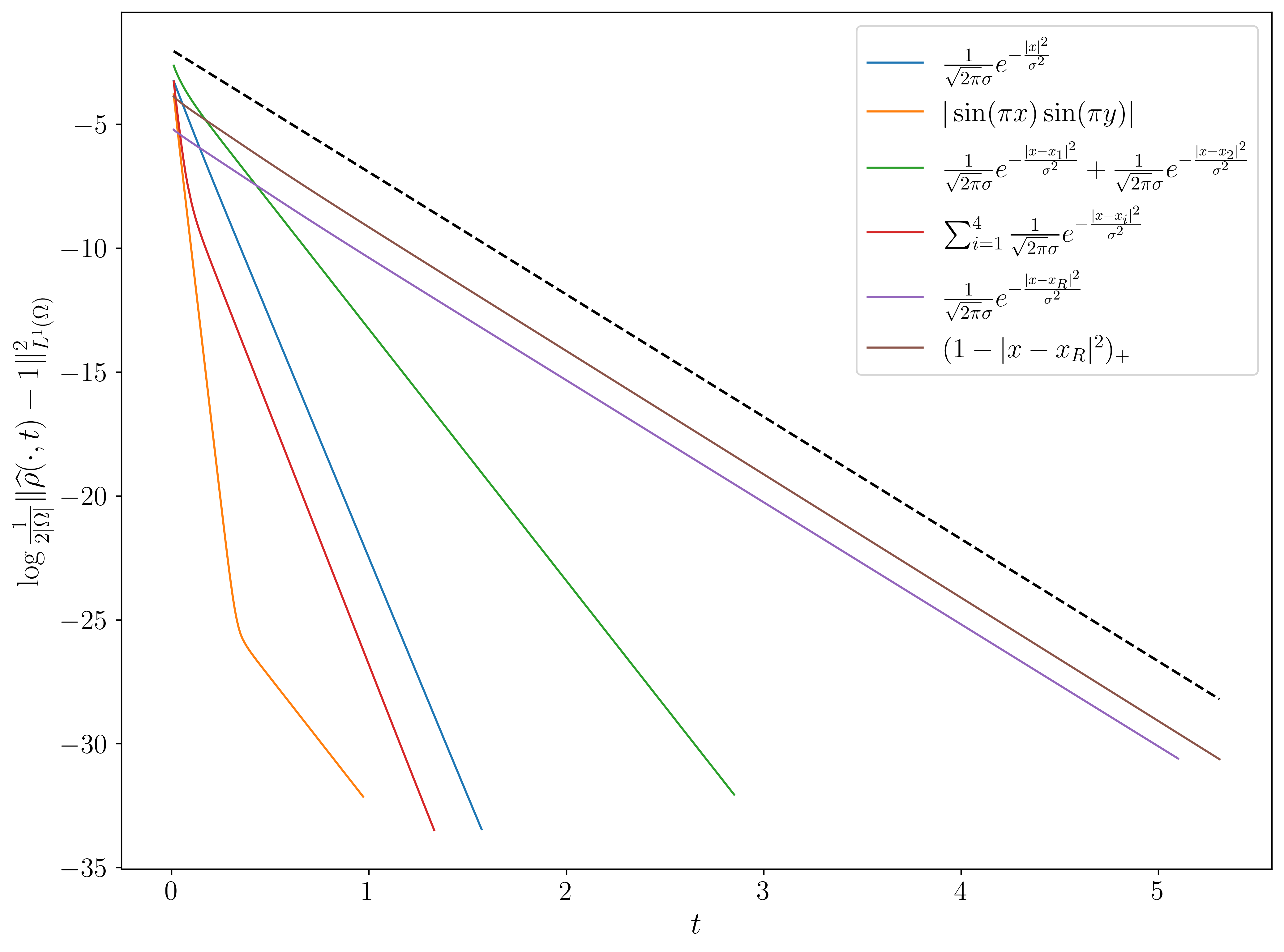}
\caption{The fractional porous medium equation \eqref{FullProbWSpace}; exponential decay to the equilibrium solution for different initial data $\rho_{0}$. The computational domain is $\Omega = (-1, 1)^2$, a mesh of $2^7 \times 2^7$ uniformly spaced mesh points was used with the time step taken as $\Delta t = 0.01$, the fractional order is $s = 1/2$, and $x_R = (1,1)$. The dashed black reference line has slope $-2 t / (C_\Omega \| \rho_0 \|_{L^\infty(\Omega)})$.} 
\label{Fig5}
\end{figure}

It is known that the comparison principle does not hold for the porous medium equation with a fractional potential \eqref{FullProbWSpace} with $\sigma=0$; cf. Section 6 of \cite{Caffarelli2011}. This was demonstrated by numerical simulations in one space dimension in \cite{DelTeso2023}. We show the same phenomenon in a two-dimensional simulation in Figure \ref{Fig4}. We choose a small constant value of $\sigma$ and two initial data: $u_{1}(x) = g(x-2)$ and $u_{2}(x) = g(x-2) + 2 g(x+2)$, where $g(x) \propto \mathrm{e}^{-x^2/C}$ for a positive constant $C$. We see in Figure \ref{Fig4} that while the chosen initial data are initially ordered, $u_1 \leq u_2$, the order is lost as the solution evolves over time. This fact is due to the non-locality of the fractional Laplacian: when the density $u_2$ diffuses, one part of it (the one given initially by $g(x-2)$) is dragged towards the other one (given initially by $2g(x+2)$) that has a higher value, and in this way the comparison with $u_1$, which is concentrated in part of the domain only, is lost.

Finally, we validate our scheme by checking the asymptotics in time of solutions to the fractional porous medium equation stated in \cite[Theorem 4.7]{carrillosuli2024}, where exponential decay of weak solutions of \eqref{FullProbWSpace} to the equilibrium solution was proved, with a rate that depends on the $L^{\infty}(\Omega)$ norm of the initial datum $\rho_{0}$. We report in Figure \ref{Fig5} the rates of convergence to the equilibrium solution for different initial data $\rho_{0}$ with the same $L^{\infty}(\Omega)$ norm, where $\Omega=(-1,1)^2$. We observe that coincidence with the theoretically predicted decay rate to the equilibrium solution is best for initial data that are the furthest from the equilibrium state: the Gaussian and a blob centered at a corner of the square domain $\Omega$.  

\begin{figure}
	\centering

 \begin{subfigure}{0.45\textwidth}			\includegraphics[width=\textwidth]{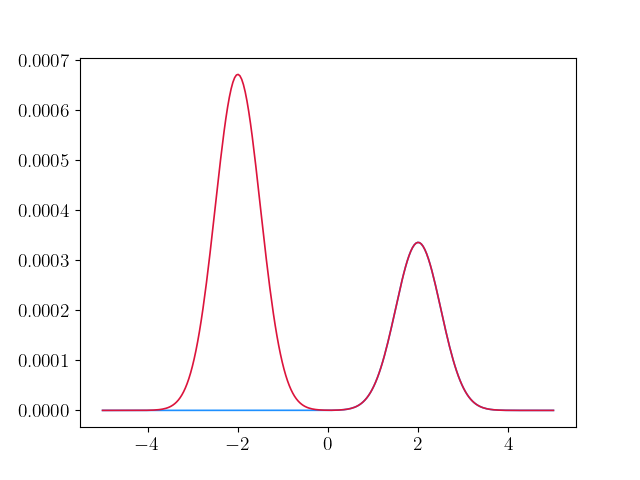}
		\caption{$t=0$, section $x_2=0$}
	\end{subfigure}
	\hfill
	\begin{subfigure}{0.45\textwidth}
		\includegraphics[width=\textwidth]{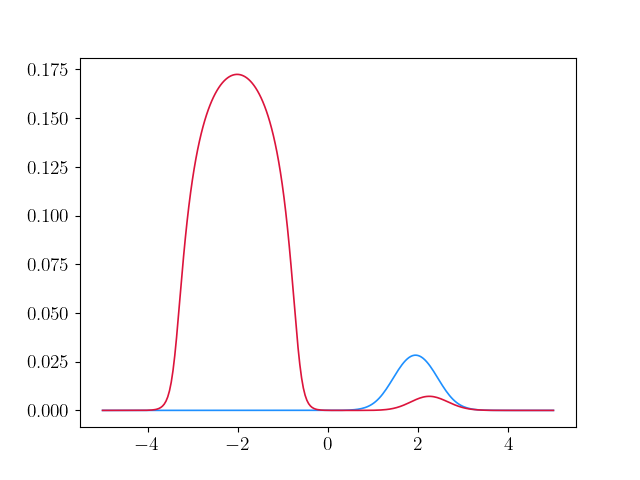}
		\caption{$t=4$, section $x_2=0$}
	\end{subfigure}
\caption{The fractional porous medium equation \eqref{FullProbWSpace}; failure of the comparison principle for fractional order $s=3/4$. The computational domain is $\Omega = (-R, R)^2$, $R=5$, $\sigma = 10^{-5}$; a uniform mesh of $2^8 \times 2^8$ uniformly spaced mesh points was used with the time step taken as $\Delta t = 0.01$.}
  \end{figure}

\begin{figure}
\centering
  \begin{subfigure}{0.7\textwidth}			\includegraphics[width=\textwidth]{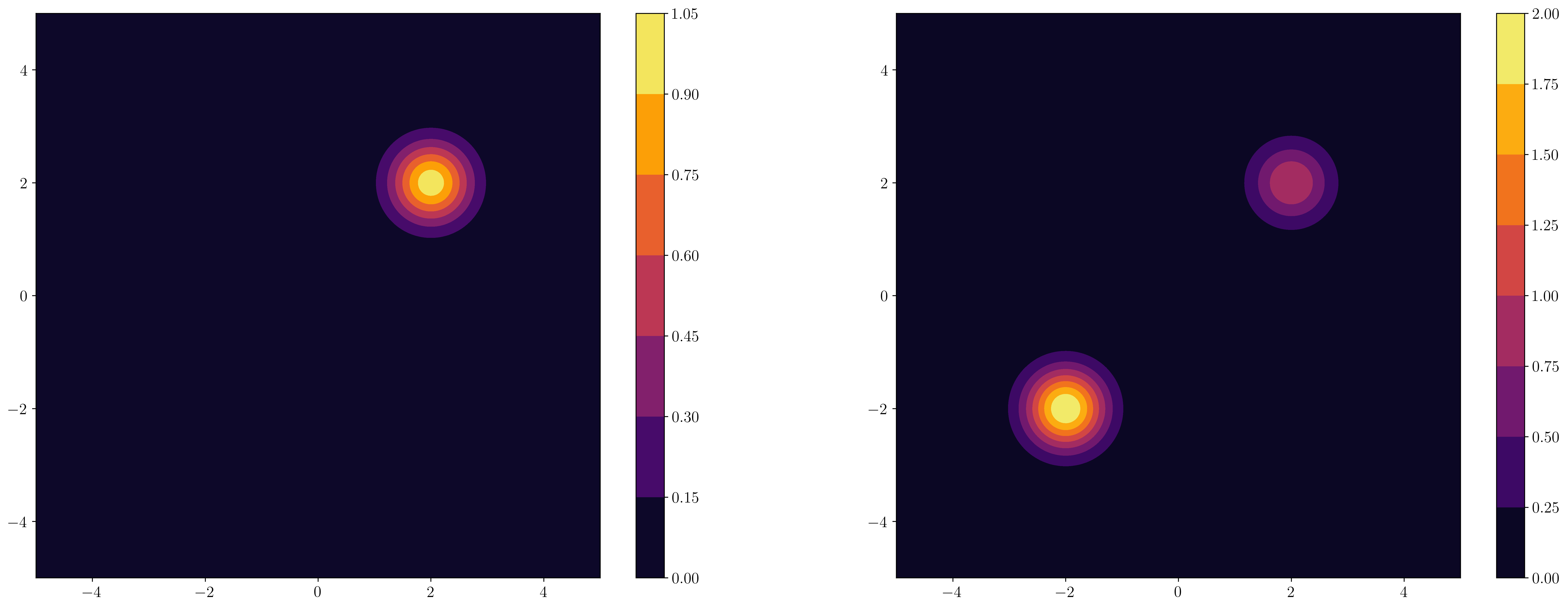}
		\caption{$t=0$}
	\end{subfigure}
	\hfill
	\begin{subfigure}{0.7\textwidth}
		\includegraphics[width=\textwidth]{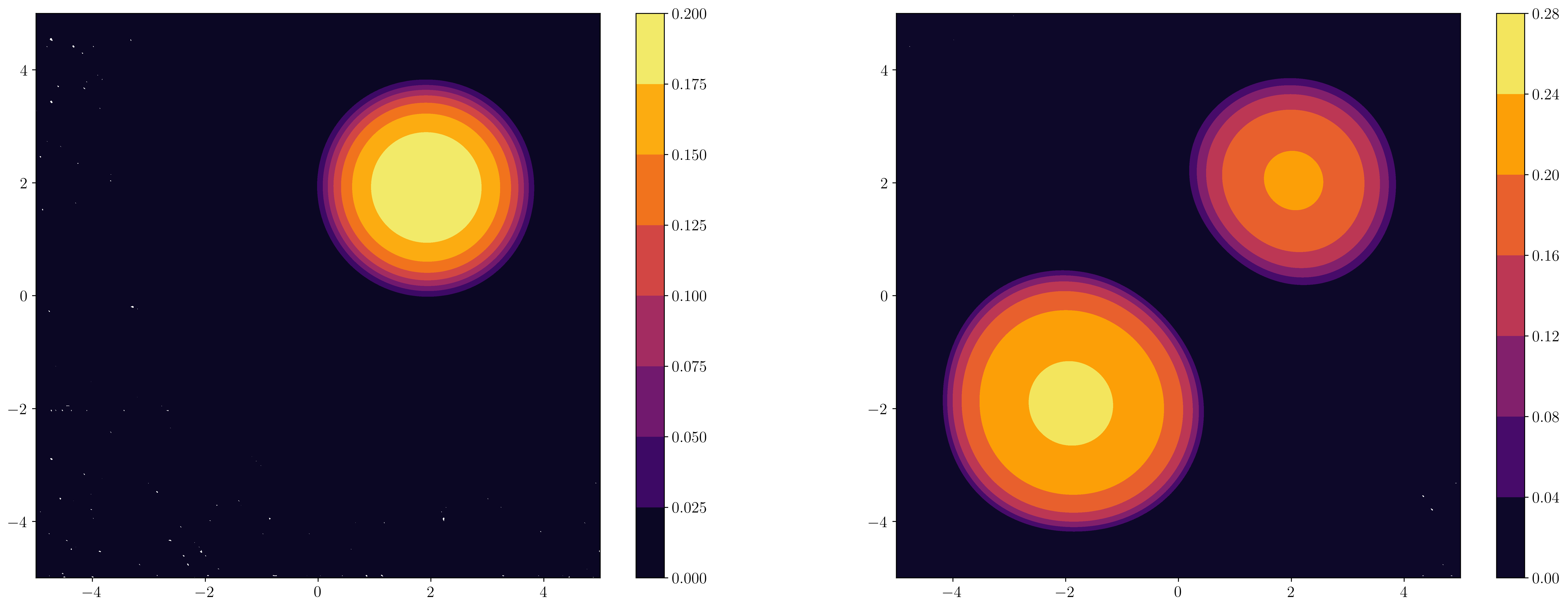}
		\caption{$t=4$}
	\end{subfigure}

\caption{The fractional porous medium equation \eqref{FullProbWSpace}; failure of the comparison principle for fractional order $s=3/4$. The computational domain is $\Omega = (-R, R)^2$, $R=5$, $\sigma = 10^{-5}$; a uniform mesh of $2^8 \times 2^8$ uniformly spaced mesh points was used
 with the time step taken as $\Delta t = 0.01$.}
\label{Fig4}
\end{figure}

\subsection{The fractional Keller--Segel model}

The standard Keller--Segel model is a well-established mathematical model used to describe chemotaxis, that is, the directed movement of cells or organisms in response to chemical gradients.
In recent years, there has been significant interest in studying nonlocal generalizations of the model, with the standard Laplacian replaced by the fractional Laplacian; see, for instance, \cite{Biler1999, BilerKarch2011,  Lafleche2019} and the references therein. This variant is motivated by the need to model anomalous diffusion, such as Lévy flights or long-range interactions, often observed in biological and ecological systems. 
The model is given by 
\begin{equation}\label{fractionalKellerSegel}
\frac{\partial \rho}{\partial t} = \Delta \rho - \nabla \cdot (\rho \nabla c), \quad \text{where } (-\Delta)^{s} c = \rho^{\ast}.
\end{equation}
Blow-up in the context of the Keller--Segel model refers to the unbounded growth of the solution in finite time, signifying the collapse of the population into a singular structure. The conditions under which blow-up occurs are critical for understanding the balance between chemotactic attraction and diffusive dispersion. 

For the classical Keller--Segel equation, that is, for $s=1$, it has been shown that in $\mathbb{R}^{d}$ blow-up of the solution can occur if the initial mass is larger than a critical value. In the seminal paper \cite{Jager1992} blow-up was proved for the classical model in $\mathbb{R}^2$ when the initial mass exceeds a critical threshold, which was later determined in \cite{Dolbeault2004}, with the result further completed and improved in \cite{Doulbeaut2006}, where it was shown that, given $M := \int_{\Omega} \rho_{0} \dx$, if $M>8\pi$ then the density blows up in finite time, while for $M<8\pi$ blow-up does not take place. Additional information on blow-up profiles was obtained in \cite{Herrero1997,Velazquez2002}. In $\mathbb{R}^d$, for $d\geq 3$, the blow-up depends not only on the initial mass of the solution, but also on how the initial mass density is concentrated; we refer to \cite{CPZ04} for further details. 

For the fractional regime $0< s< 1$ the study of potential blow-up for $\rho$ is a challenging undertaking. The blow-up conditions for the fractional Keller--Segel model depend, in any number of dimensions $d\geq 1$, not only on the initial mass $M$, but also on the relative concentration of the initial density. It is in fact known that on $\mathbb{R}^{d}$, under some initial mass concentration criteria, the solution ceases to exist in finite time; see \cite[Proposition 4.2]{Biler1999} and \cite[Theorem 3.7]{Lafleche2019} for a more general class of models in which non-standard diffusion is also present. 

We test in two space dimensions the behaviour of solutions to \eqref{fractionalKellerSegel} using our numerical approximation of the fractional Laplacian. We explore the blow-up by choosing several initial values $\rho_{0}$, which are all Gaussians $\rho_{0}(x) \propto \exp(-|x|^{2} / 2 \sigma^{2})$ with different masses $M = \int_{\Omega} \rho_{0} \dx$ and variances $\sigma$, leading to a range of initial concentrations. Our results are reported in Figure \ref{Fig6} and Figure \ref{Fig7}. For all of our simulations we chose a dynamic time step $\Delta t_n$, defined as follows: in a fully discrete approximation (in space and time), if $\rho_h^n$ is the numerical approximation of the solution to \eqref{fractionalKellerSegel} at time $t_n = \sum_{k=1}^n \Delta t_{k}$, then the following time step $\Delta t_{n+1}$ is defined by $\Delta t_{n+1} := 1/\max(\| \rho_h^n\|_{L^{\infty}(\Omega)}, |\rho_h^n|_{H^1(\Omega)})$. In our simulations numerical blow-up is declared when the adaptive time step $\Delta t$ becomes smaller than a threshold value $\delta \ll 1$. 
We observe strong correlation between blow-up and the initial concentration of the density: for two initial data, having the same mass $M$, blow-up can happen or not depending on the extent to which the density is concentrated at the origin. Secondly, we notice that blow-up is more likely to occur for small values of the fractional order $s \in (0,1)$. 

Blow-up results for the fractional Keller--Segel model were previously proved in the literature with the fractional Laplacian defined through its Riesz integral representation on $\mathbb{R}^d$ or with its truncation to a bounded domain in the case of a homogenenous Dirichlet boundary condition. The novelty of our work is in numerically testing the blow-up in the case of the spectral representation of the fractional Laplacian. In addition, we provide in Proposition \ref{BlowUpProp} a proof of existence of solutions to the fractional Keller--Segel model, based on the spectral definition of the fractional Dirichlet Laplacian, that exhibit finite-time blow-up. Let us consider the fractional Keller--Segel model
\begin{equation} \label{FracKS}
\frac{\partial \rho}{\partial t} = \Delta \rho - \nabla \cdot (\rho \nabla c), \quad \text{where } (-\Delta_{\mathrm{D}})^{s} c = \rho, \quad \frac{1}{2} < s < 1,
\end{equation}
together with its weak formulation 
\begin{gather}
\text{Find } \rho \in L^2(0,T;V)  \text{ with $\frac{\partial \rho}{\partial t} \in L^\infty(0,T;V')$ such that} \nonumber \\
\Big\langle \frac{\partial \rho}{\partial t}, \phi \Big\rangle = - \int_{\Omega} \nabla \rho \cdot \nabla \phi \dx + \int_{\Omega} \rho \nabla c \cdot \nabla \phi \dx \quad \text{for all } \phi \in V \text{ and a.e. $t \in (0,T]$}, \label{BasicWeakForm}
\end{gather}
for $V=H^1(\Omega)$, subject to the initial condition $\rho(x, 0) = \rho_{0}(x)$, where $\rho_{0} \in L^{\infty}(\Omega)$ and $\rho_{0}(x) \geq 0$ for a.e. $x \in \Omega$ and where
\begin{equation} \label{MyFracPois1}
    (-\Delta_{\mathrm{D}})^{s} c = \rho \textrm{ in } \Omega, \quad \frac{1}{2} < s <1. 
\end{equation}
We note that taking $\phi \equiv 1$ as a test function in \eqref{BasicWeakForm} implies conservation of mass of $\rho$ in time, namely $\frac{\dd}{\dd t} M(t) = 0$, with $M(t) = \int_{\Omega} \rho(x, t) \dx$. 

\begin{prop} \label{BlowUpProp}
Let $\Omega$ be a ball of radius $R$ centred at zero
in $\mathbb{R}^{d}$ with $d= 2,3$. Let $\frac{1}{2}<s<1$, and let $\rho$ be a nonnegative solution to \eqref{FracKS}. Then, there exists a nonnegative function $\rho_{0} \in L^\infty(\Omega)$ such that the corresponding solution blows up in finite time.
\end{prop}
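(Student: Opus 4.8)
I would prove this by a virial (second--moment) argument adapted to the spectral fractional Laplacian. Fix a smooth, nonnegative $\rho_{0}$ with compact support in $\Omega$, of mass $M := \int_{\Omega}\rho_{0}\dx>0$, and let $\rho$ be the associated solution of \eqref{FracKS}--\eqref{MyFracPois1} on its maximal existence interval $[0,T_{\max})$, so that $M(t)\equiv M$. Set $W(t) := \int_{\Omega}|x|^{2}\rho(x,t)\dx\ge 0$. Using $\phi(x)=|x|^{2}\in H^{1}(\Omega)$ as a test function in \eqref{BasicWeakForm}, together with $\Delta(|x|^{2})=2d$, $\nabla(|x|^{2})=2x$, $\rho\ge 0$, and $x\cdot n\ge 0$ on $\partial\Omega$ (true because $\Omega$ is convex and contains the origin, in particular for a ball centred at $0$), I get
\[
\frac{d}{dt}W(t) \;=\; 2dM - 2\!\int_{\partial\Omega}(x\cdot n)\,\rho\,\dd\sigma + 2\!\int_{\Omega}\rho\,x\cdot\nabla c\,\dx \;\le\; 2dM + 2\!\int_{\Omega}\rho\,x\cdot\nabla c\,\dx .
\]
Thus everything reduces to showing that the drift term becomes strongly negative once $\rho$ is concentrated near the origin.

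For the drift term the plan is to use the Green's function $G_{s}^{\Omega}$ of the spectral fractional Dirichlet Laplacian, $c(x)=\int_{\Omega}G_{s}^{\Omega}(x,y)\rho(y)\dy$, with $G_{s}^{\Omega}(x,y)=\frac{1}{\Gamma(s)}\int_{0}^{\infty}t^{s-1}p_{t}^{\Omega}(x,y)\dt$ obtained by subordination from the Dirichlet heat kernel $p_{t}^{\Omega}$. Writing the drift integral as a double integral against $x\cdot\nabla_{x}G_{s}^{\Omega}(x,y)$ and symmetrising in $x\leftrightarrow y$ --- legitimate since $G_{s}^{\Omega}$ is symmetric and smooth off the diagonal with $|\nabla G_{s}^{\Omega}(x,y)|\lesssim|x-y|^{2s-d-1}$, which is integrable against $\rho\otimes\rho$ thanks to $s>\tfrac12$ and the Sobolev integrability of $\rho$ ($d\le 3$) --- I obtain
\[
\int_{\Omega}\rho\,x\cdot\nabla c\,\dx \;=\; \tfrac12\int_{\Omega}\!\int_{\Omega}\rho(x)\rho(y)\,\ddlambda\big|_{\lambda=1}\! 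G_{s}^{\Omega}(\lambda x,\lambda y)\,\dx\,\dy .
\]
The sign of the inner derivative is then forced by two classical properties of Dirichlet kernels: exact parabolic scaling $G_{s}^{\mu\Omega}(\mu x,\mu y)=\mu^{2s-d}G_{s}^{\Omega}(x,y)$ (inherited from $p_{t}^{\mu\Omega}(\mu x,\mu y)=\mu^{-d}p_{t/\mu^{2}}^{\Omega}(x,y)$), and domain monotonicity $G_{s}^{\Omega'}\le G_{s}^{\Omega}$ for $\Omega'\subseteq\Omega$ (inherited from the maximum principle for $p_{t}$). Applying monotonicity with $\Omega'=\lambda\Omega\subseteq\Omega$ for $\lambda\le 1$ and then scaling shows $\lambda\mapsto\lambda^{d-2s}G_{s}^{\Omega}(\lambda x,\lambda y)$ is minimised over $(0,1]$ at $\lambda=1$, hence $\ddlambda\big|_{\lambda=1}G_{s}^{\Omega}(\lambda x,\lambda y)\le -(d-2s)G_{s}^{\Omega}(x,y)$, and therefore
\[
\int_{\Omega}\rho\,x\cdot\nabla c\,\dx \;\le\; -\frac{d-2s}{2}\int_{\Omega}\!\int_{\Omega}\rho(x)\rho(y)G_{s}^{\Omega}(x,y)\,\dx\,\dy \;=\; -\frac{d-2s}{2}\int_{\Omega}\rho\,c\,\dx .
\]

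It remains to bound $\int_{\Omega}\rho\,c\,\dx$ from below by $M$ and $W$. The near--centre lower bound for the Dirichlet heat kernel gives $G_{s}^{\Omega}(x,y)\ge c_{0}\,|x-y|^{2s-d}$ for $x,y\in B_{R/2}$, with $c_{0}=c_{0}(d,s,R)>0$; by Chebyshev at most a fraction $4W/R^{2}$ of the mass lies outside $B_{R/2}$, so restricting the double integral to $B_{R/2}\times B_{R/2}$ and applying Jensen's inequality to the convex function $t\mapsto t^{-(d-2s)/2}$ (using $\int\!\int\rho(x)\rho(y)|x-y|^{2}\le 2MW$) gives, whenever $W\le MR^{2}/8$,
\[
\int_{\Omega}\rho\,c\,\dx \;\ge\; c_{1}\,\frac{M^{2+\beta}}{W^{\beta}},\qquad \beta:=\frac{d-2s}{2}>0,\quad c_{1}=c_{1}(d,s,R)>0
\]
($d-2s>0$ since $d\ge 2$, $s<1$). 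Combining the displays yields $\frac{d}{dt}W(t)\le 2dM-(d-2s)c_{1}M^{2+\beta}W(t)^{-\beta}$ as long as $W(t)\le MR^{2}/8$. Now I would choose $\rho_{0}$ concentrated, e.g. $\rho_{0}=M\varepsilon^{-d}\varphi(\cdot/\varepsilon)$ with $\varphi\ge 0$ a fixed bump and $\varepsilon$ small, so that $W_{0}:=W(0)$ is smaller than both $MR^{2}/8$ and $\big((d-2s)c_{1}M^{1+\beta}/(4d)\big)^{1/\beta}$; then the right--hand side is $\le-2dM<0$ at $t=0$, and a bootstrap keeps $W(t)\le W_{0}$, hence $\frac{d}{dt}W(t)\le-2dM$ for all $t$, so $W(t)\le W_{0}-2dMt$. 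Since $W\ge 0$ this forces $T_{\max}\le W_{0}/(2dM)<\infty$; and as $W(t)\to 0$ the rearrangement (bathtub) bound $W\ge c_{d}M^{(d+2)/d}\|\rho\|_{L^{\infty}(\Omega)}^{-2/d}$ forces $\|\rho(t)\|_{L^{\infty}(\Omega)}\to\infty$ --- i.e. finite--time blow--up.

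I expect the main obstacle to be the middle step: rigorously justifying the symmetrisation and, above all, the dilation inequality $x\cdot\nabla_{x}G_{s}^{\Omega}(x,y)+y\cdot\nabla_{y}G_{s}^{\Omega}(x,y)\le-(d-2s)G_{s}^{\Omega}(x,y)$ for the \emph{spectral} (rather than Riesz) fractional Dirichlet Green's function, which requires combining the subordination formula, the exact parabolic scaling, and the domain monotonicity of the Dirichlet heat kernel, and where the hypothesis $s>\tfrac12$ is genuinely used (integrability of $\nabla G_{s}^{\Omega}$ near the diagonal, equivalently $\nabla c$ being a bona fide function so that the drift term makes sense). The remaining ingredients --- the near--centre heat--kernel lower bound, the Chebyshev/Jensen estimate, the validity of the virial identity inside the weak solution class of \eqref{BasicWeakForm}, and the passage from finite maximal time to blow--up via a continuation criterion --- should be comparatively routine.
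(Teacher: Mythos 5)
Your proposal is correct in outline and follows the same global virial strategy as the paper: evolve the second moment, discard the boundary term by convexity of the ball, show the drift term is controlled by $-\tfrac{1}{2}(d-2s)\int_\Omega \rho\, c\,\dx$, convert this into a Riesz-type interaction bounded below in terms of the mass and the second moment, and close with a differential inequality for sufficiently concentrated initial data. The genuine difference lies in how the central inequality
\[
\int_\Omega \rho\,(x\cdot\nabla c)\,\dx\;\le\;-\tfrac{1}{2}(d-2s)\int_\Omega \rho\, c\,\dx
\]
is obtained. The paper proves it at the operator level via the Pohozaev-type estimate of Lemma \ref{PohoLemma} (applied with $u=c$ and $(-\Delta_{\mathrm D})^{s}c=\rho$), which rests on the scaling property of the spectral fractional Laplacian (Lemma \ref{ScaleLemma}) and a dilation difference-quotient argument in the eigenfunction expansion; you instead work at the level of the Green's function $G_s^\Omega$ of the spectral Dirichlet operator, symmetrise in $x\leftrightarrow y$, and deduce the dilation inequality $x\cdot\nabla_x G+y\cdot\nabla_y G\le-(d-2s)G$ from subordination, parabolic scaling and domain monotonicity of the Dirichlet heat kernel (indeed $\lambda\mapsto\lambda^{d-2s}G_s^\Omega(\lambda x,\lambda y)$ is $\ge$ its value at $\lambda=1$ for $\lambda<1$ and $\le$ it for $\lambda>1$, so its derivative at $\lambda=1$ is nonpositive). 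This is sound for $x\neq y$ in the compact support of $\rho$, and the hypotheses $s>\tfrac12$ and boundedness of $\rho$ are exactly what make the symmetrised integral absolutely convergent, as you note; it buys a more elementary, self-contained proof of the key inequality at the price of needing off-diagonal smoothness and gradient bounds for $G_s^\Omega$, whereas the paper's route stays in spectral space but requires the auxiliary regularity $(-\Delta_{\mathrm D})^{s}c\in H^1(\Omega)$. The remaining steps differ only cosmetically: the paper obtains $\int_\Omega\rho c\gtrsim M^{2+\beta}\Phi^{-\beta}$ by two-sided heat-kernel bounds on the support of $\rho$ followed by a H\"older interpolation against $\int\!\int\rho(x)\rho(y)|x-y|^{2}\le 2M\Phi$ (so no Chebyshev restriction to $B_{R/2}$ is needed), while you use a near-centre Green's-function lower bound plus Chebyshev and Jensen, valid only while $\Phi\le MR^{2}/8$ --- harmless, since your bootstrap keeps the second moment decreasing; and your closing rearrangement (bathtub) bound, making explicit that a vanishing second moment forces $\|\rho\|_{L^\infty(\Omega)}\to\infty$, is a detail the paper only asserts.
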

\begin{proof}
Consider the second moment function $\Phi(t) := \int_{\Omega} |x|^{2} \rho(x, t) \dx > 0$. Clearly $0<\Phi(0) < \infty$.
The idea is to follow the evolution of the function $\Phi(t)$ and to prove that $\Phi$ satisfies a differential inequality of the form
\begin{equation} \label{BUcriterion}
\frac{\dd}{\dd t} \Phi(t) \leq f(\Phi(t)),
\end{equation}
where $f$ is a continuous nondecreasing function such that $f(0)<0$. Once we have  shown \eqref{BUcriterion}, we shall define $\Phi^{\dagger} \coloneqq \inf \{ \Phi > 0\text{ such that } f(\Phi) =0 \} \in (0, +\infty]$, for any sufficiently smooth solution of \eqref{FracKS} with initial second moment satisfying $\Phi(0) < \Phi^{\dagger}$; then, there exists a time $T^{\dagger} < \infty $ such that $\lim_{t \nearrow T^{\dagger}} \Phi(t) = 0$. Vanishing of the second moment will then imply blow-up of the solution.  Using the weak formulation of our problem and integration by parts, we have that 
\begin{equation} \label{EqBU1}
\frac{\dd \Phi}{\dd t} = 2dM - 2 \int_{\partial \Omega} \rho x \cdot n + 2 \int_{\Omega} \rho \nabla c \cdot x \dx,
\end{equation}
where $M = \int_{\Omega} \rho(x, t) \dx$ and $n$ is the unit outward normal vector to $\partial \Omega$. 
As the domain $\Omega$ is an open ball centred at zero, it follows that $n = x/|x|$ (notice that we could have assumed $\Omega$ to be any bounded convex set with the centre of the coordinate system contained in the interior of $\Omega$ to ensure that $x \cdot n \geq 0$) and therefore the second term on the left-hand side \eqref{EqBU1} can be discarded.

For the third term on the left-hand side of \eqref{EqBU1} we shall use Lemma \ref{PohoLemma}, first noticing that
\begin{align} \label{EqBU2}
\int_{\Omega} \rho \nabla c \cdot x \dx &= \int_{\Omega} (\nabla c \cdot x) (-\Delta_{\mathrm{D}})^{s} c \dx. 
\end{align}

Using Lemma \ref{PohoLemma} on the right-hand side of \eqref{EqBU2}, we immediately have 
\begin{align} \label{EqBu2.1}
\int_{\Omega} (\nabla c \cdot x) (-\Delta_{\mathrm{D}})^{s} c \dx &\leq - \frac{1}{2}(d-2s) \int_{\Omega} c (-\Delta_{\mathrm{D}})^{s}c \dx =  - \frac{1}{2}(d-2s) \int_{\Omega} c \rho^{} \dx. 
\end{align}

We now recall the explicit expression for $c$, which is the solution of \eqref{MyFracPois1}, given by 
\begin{equation} \label{ExpFracPoiSol}
c(x) = \frac{1}{\Gamma(s)} \int_{0}^{\infty} \int_{\Omega} W_{t}^{\Omega}(x, y) \frac{1}{t^{1-s}} \rho(y) \dy \dt,
\end{equation} 
where $W_{t}^{\Omega}$ denotes the distributional heat kernel for the heat equation on $\Omega$ with homogeneous Dirichlet boundary condition. We note that the assumption that $\rho$ is nonnegative implies the nonnegativity of $c$. 

We proceed by considering the following auxiliary problem for $\lambda>1$:
\begin{equation*}
(-\Delta_{\textrm{D}}^{\lambda \Omega})^s \overline{c} =  \overline{\rho} \quad \textrm{in } \lambda \Omega \coloneqq \{ \lambda x, x \in \Omega\},
\end{equation*}
where $\overline{\rho}$ is the natural extension by zero outside $\Omega$, namely $\overline{\rho}(y) = \rho(y)$ if $y\in\Omega$ and $\overline{\rho}(y) = 0$ if $y\notin \Omega$. By Lemma \ref{ConvLemma} we have $\overline{c} \to c$ in $\mathbb{H}^s(\Omega)$ as $\lambda \to 1$. This implies strong and hence weak convergence in $L^2(\Omega)$. In particular $(\overline{c}, \rho) \to (c, \rho)$ as $\lambda \to 1$. Therefore, there exists a $\lambda_1 > 1$ such that $\frac{1}{2} (\overline{c}, \rho) < (c, \rho) < \frac{3}{2} (\overline{c}, \rho)  $. Fix a $\lambda$ such that $1< \lambda < \lambda_1$; we can then rewrite \eqref{EqBu2.1} as
\begin{align*}
\int_{\Omega} (\nabla c \cdot x) (-\Delta_{\mathrm{D}})^{s} c \dx &\leq - \frac{1}{2}(d-2s) \int_{\Omega} c \rho \dx = - \frac{1}{2}(d-2s) (c, \rho) \\
& \leq - \frac{1}{4}(d-2s) (\overline{c}, \rho) = - \frac{1}{4}(d-2s) \int_{\Omega} \overline{c}\rho \dx. 
\end{align*}

Since $\overline{\rho}$ is equal to zero outside $\Omega$, for $\overline{c}$ we have an expression analogous to  \eqref{ExpFracPoiSol} with $W_t^{\lambda \Omega}$ in place of $W_t^\Omega$. Therefore we have
\begin{align*}
\int_{\Omega} (\nabla c \cdot x) (-\Delta_{\textrm{D}})^{s} c \dx &\leq -\frac{1}{2} (d - 2s) \frac{1}{\Gamma(s)} \int_{\Omega} \int_{0}^{\infty} \int_{\Omega} \rho(x) \rho(y) W_{t}^{\lambda \Omega}(x, y) \frac{1}{t^{1-s}} \dy \dt \dx.
\end{align*}
Now that the distributional Dirichlet heat kernel refers to the domain $\lambda \Omega$, which strictly contains $\Omega$, we can use the exponential bounds that are known to hold for the distributional heat kernel in a domain $\Omega$, which is strictly inside the domain $\lambda \Omega$:
\begin{equation*}
c_{1} \frac{\mathrm{e}^{-|x-y|^{2}/(c_{3} t)}}{t^{d/2}} \leq  W_{t}^{\lambda \Omega}(x, y) \leq c_{2} \frac{\mathrm{e}^{-|x-y|^{2}/(c_{4} t)}}{t^{d/2}}, \quad x, y \in \Omega \subset \lambda \Omega, \quad t>0,
\end{equation*}
for positive constants $c_{1}, c_{2}, c_{3}$ and $c_4$ (see \cite{caffarelli2016fractional}). We then have, after integration in time, with a similar computation as in Lemma A.4 in \cite{carrillosuli2024} that
\begin{align*}
\int_{\Omega} (\nabla c \cdot x) (-\Delta)^{s} c \dx &\leq - C (d- 2s)\frac{\Gamma(d/2-s)}{\Gamma(s)} \int_{\Omega} \int_{\Omega} \rho(x) \rho(y) |x-y|^{-(d-2s)} \dx \dy,
\end{align*}
where $C>0$ is a constant. 
Therefore, for some positive constant $C_{d,s}>0$, it follows that
\begin{equation*}
\frac{\dd \Phi}{\dd t} \leq 2dM + 2 \int_{\Omega} \rho \nabla c \cdot x \dx \leq 2dM - C_{d, s} (d-2s) \int_{\Omega} \int_{\Omega} \rho(x) \rho(y) |x-y|^{-(d-2s)} \dx \dy.
\end{equation*}

Using H\"{o}lder's inequality, denoting $\alpha = d - 2s$ and $I_{\alpha} = \int_{\Omega} \int_{\Omega} |x-y|^{-\alpha} \rho(x) \rho(y) \dx \dy$, we have 
\begin{align*}
M^{2} &= \int_{\Omega} \int_{\Omega} \rho(x) \rho(y) \dx \dy \leq \bigg( \int_{\Omega} \int_{\Omega} \rho(x) \rho(y) |x-y|^{2} \dx \dy \bigg)^{\frac{\alpha}{\alpha + 2}} \bigg( \int_{\Omega} \int_{\Omega} \rho(x) \rho(y) |x-y|^{-\alpha} \dx \dy \bigg)^{\frac{2}{\alpha + 2}} \\
&= \bigg( \int_{\Omega} \int_{\Omega} \rho(x) \rho(y) (|x|^{2} - x \cdot y - y \cdot x + |y|^{2}) \dx \dy\bigg)^{\alpha/(\alpha+2)} I_{\alpha}^{\frac{2}{\alpha + 2}} \\
&= \bigg( 2M \Phi - 2 \bigg| \int_{\Omega} x \rho(x) \dx \bigg|^{2} \bigg)^{\alpha/(\alpha + 2)} I_{\alpha}^{\frac{2}{\alpha + 2}};
\end{align*}
hence $2^{-\alpha/2} M^{\alpha/2 + 2} \Phi^{-\alpha/2} \leq I_{\alpha}$.
Therefore, we have in the end
\begin{equation} \label{EqBU3}
\frac{\dd \Phi}{\dd t}  \leq 2dM - C_{d, s} (d-2s) M^{\frac{d}{2} - s + 2} \Phi^{-\frac{d}{2} + s}
\end{equation}
This means that if at time $t=0$ we have 
\begin{equation*}
\Phi(0)^{\frac{d}{2} - s} < \frac{C_{d,s} (d - 2s) M^{\frac{d}{2} - s + 2}}{C_{d,s} (d - 2s) M_0^{\frac{d}{2} - s + 2} + 2dM_0},
\end{equation*}
then the right-hand side of expression \eqref{EqBU3} is strictly negative, and decreases for $t\geq 0$.

\end{proof}

\begin{figure}[H]
\centering
\begin{subfigure}{0.53 \textwidth}
\includegraphics[width = \textwidth]{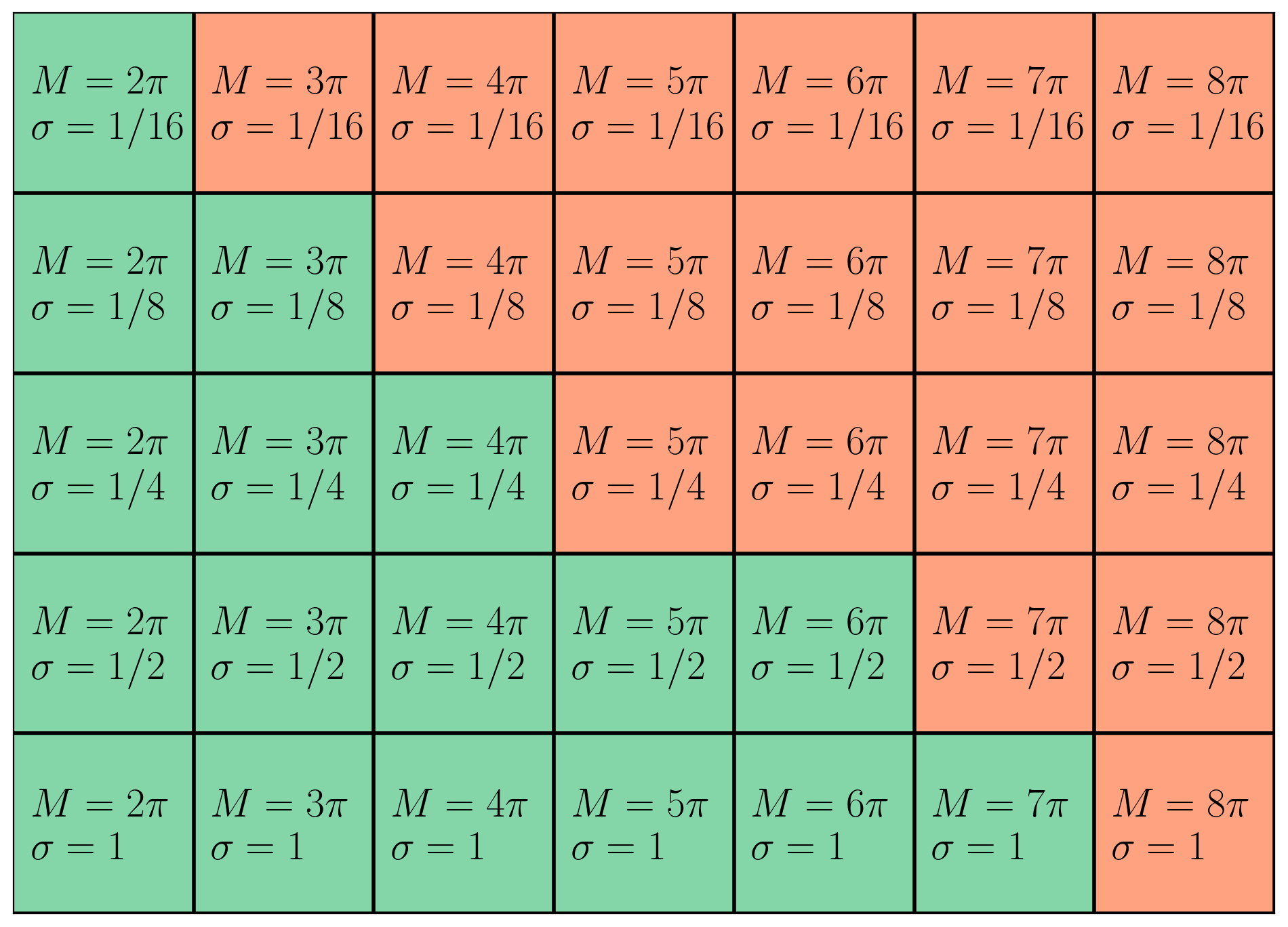}
\caption{$s = 0.75$}
\end{subfigure}
 \hfill
\begin{subfigure}{0.455 \textwidth}
\centering
\includegraphics[width = \textwidth]{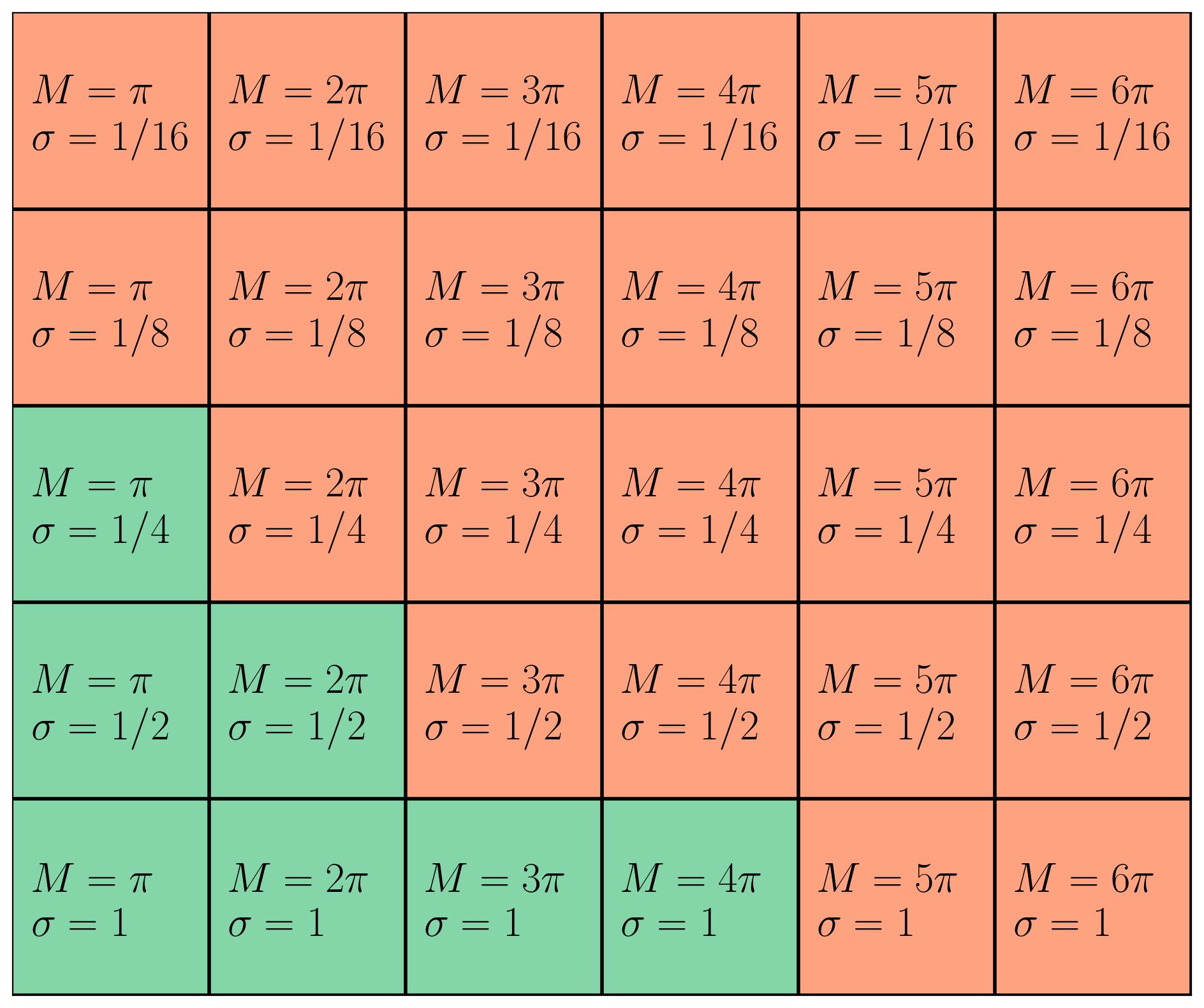}
\caption{$s = 0.5 + \epsilon$}
\end{subfigure}
\caption{The fractional Keller--Segel model \eqref{fractionalKellerSegel}; blow-up, depending on mass and concentration of the initial datum. The numerical tests were performed on the unit disc $\Omega = B_{1}(0)$; a uniform mesh of 37325 nodes was used for two different fractional orders, $s = 0.75$ in $(a)$ and $s = 0.5 + \epsilon$ in $(b)$, with different initial data having different masses $M$ and concentrations, given by the parameter $\sigma$. Each cell corresponds to a numerical simulation in which either there is no numerical blow-up (green cell) or numerical blow-up occurs (orange cell). The threshold value for numerical blow-up was set to $\delta = 10^{-7}$.}
\label{Fig6}
\end{figure}

\begin{figure}[H]
\centering
\begin{subfigure}{0.49\textwidth}
\includegraphics[width = \textwidth]{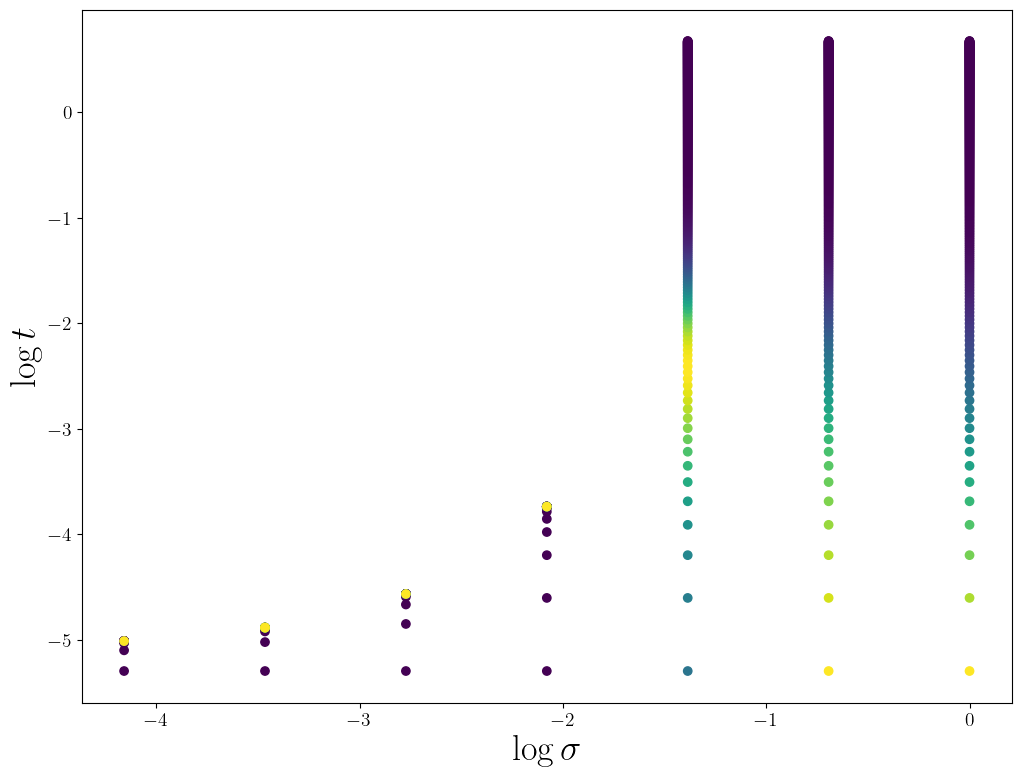}  
\end{subfigure}
\hfill
\begin{subfigure}{0.49\textwidth}
\centering
\includegraphics[width = \textwidth]{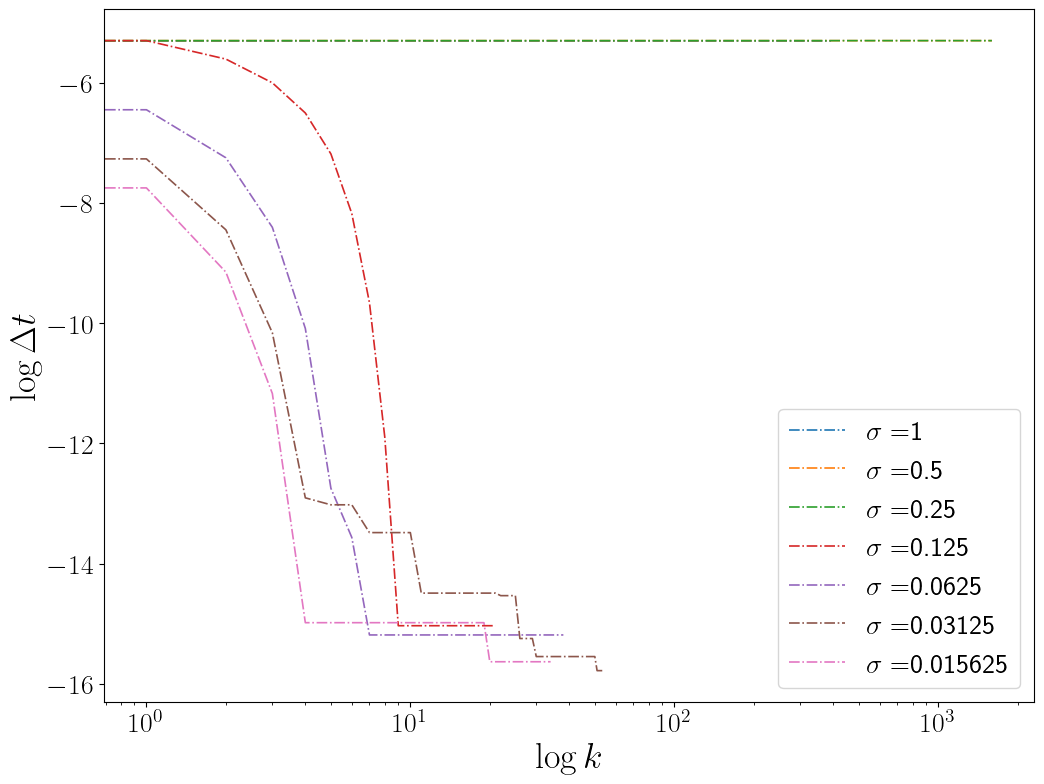}
\end{subfigure}
\caption{The fractional Keller--Segel model \eqref{fractionalKellerSegel}; blow-up depending on the concentration of the initial datum. Numerical tests were performed on the unit disc $\Omega = B_{1}(0)$; a uniform mesh of 37325 mesh points was used for fractional order $s= 0.75$, with initial data having the same mass $M = 4\pi$ and different concentrations, given by the parameter $\sigma$. The threshold value for numerical blow-up was set to $\delta = 10^{-7}$.}
\label{Fig7}
\end{figure}

\section*{Acknowledgements}
This work was supported by the Advanced Grant Nonlocal-CPD (Nonlocal PDEs for Complex Particle Dynamics: Phase Transitions, Patterns and Synchronization) of the European Research Council Executive Agency (ERC) under the European Union’s Horizon 2020 research and innovation programme (grant agreement No. 883363). JAC was also partially supported by the EPSRC grant numbers EP/T022132/1 and EP/V051121/1. YN was supported by EPSRC grants EP/Y010086/1 and EP/Y030990/1. We are grateful to Professor Christoph Schwab (ETH Z\"{u}rich)
for his helpful comments on the preprint version of this paper.

\FloatBarrier

\bibliographystyle{abbrv}
\bibliography{./fractional-rational-FE.bib}

\renewcommand{\appendixname}{Appendix}
\addappendix
\label{sec:appendix} We present two auxiliary results: the first, Lemma \ref{ScaleLemma}, is a scaling property of the spectral fractional Laplacian, that is valid under both a homogeneous Dirichlet and a homogeneous Neumann boundary condition; the second, Lemma \ref{PohoLemma}, is a Pohozaev-type estimate, valid for the Dirichlet spectral fractional Laplacian. For future reference, we recall the definition of the fractional-order Sobolev space $H^s(\Omega)$, for $s \in (0,1)$ and $\Omega \subseteq \mathbb{R}^d$:
\begin{equation}
    H^s(\Omega) := \left\{ u \in L^2(\Omega) \textrm{ such that } |u|_{H^s(\Omega)}^2 := \int_\Omega \int_\Omega \frac{|u(x) - u(y)|^2}{|x - y|^{d+2s}} \dx \dy < \infty \right\}, 
\end{equation}
equipped with the norm $\|\cdot\|_{H^s(\Omega)}$ defined by $\|u\|_{H^s(\Omega)}:=(\|u\|^2 + |u|^2_{H^s(\Omega)})^{\frac{1}{2}}$.
Let $H^s_0(\Omega) \coloneqq \overline{C^\infty_0(\Omega)}^{H^s(\Omega)}$, that is, the closure of the set of infinitely many times
continuously differentiable functions with compact support contained in $\Omega$, with respect to the topology induced by the norm $\|\cdot\|_{H^s(\Omega)}$.

\begin{lemma}[Scaling property for the spectral fractional Laplacian] \label{ScaleLemma}
Let $\Omega$ be a bounded Lipschitz domain, and for $s \in (0,1)$ let $(-\Delta_{\mathcal{B}}^{\Omega})^{s}$ be the spectral fractional Laplacian on $\Omega$ with boundary condition $\mathcal{B}$, defined in \eqref{SpectralFracLap}. Let $u \in \mathbb{H}^{s}_{\mathcal{B}}(\Omega)$,  where $\mathbb{H}^{s}_{\mathcal{B}}(\Omega)$ signifies the space $\mathbb{H}^s(\Omega)$ defined in \eqref{DirFracLap} in the case of the 
Dirichlet Laplacian, and the space $\mathbb{H}^s(\Omega)$ defined in \eqref{NeuFracLap} in the case of the Neumann Laplacian. Let $\lambda > 0$, and assume that if $\lambda>1$ then we have an extension $\overline{u}(x')$ of $u$ to $\lambda \Omega$, with $x'= \lambda x \in \lambda \Omega$ for some $x \in \Omega$, such that $\overline{u} \in \mathbb{H}^{s}_{\mathcal{B}}(\lambda \Omega)$. Then,
\begin{equation}
(-\Delta_{\mathcal{B}}^{\Omega})^{s} u_{\lambda}(x) = \lambda^{2s} [ (-\Delta_{\mathcal{B}}^{\lambda \Omega})^{s} \overline{u} ](\lambda x).
\end{equation}
\end{lemma}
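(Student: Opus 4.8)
The plan is to reduce the statement to the spectral definition \eqref{FracLapNeu} and to exploit the elementary behaviour of the Dirichlet/Neumann eigenpairs under the dilation $x\mapsto\lambda x$. Let $\{(\lambda_k,\psi_k)\}_k$ be the eigenpairs of $-\Delta_{\mathcal{B}}$ on $\Omega$, with $\{\psi_k\}$ an $L^2(\Omega)$-orthonormal basis and $\mathcal{B}(\psi_k)=0$ on $\partial\Omega$. First I would check that setting $\psi_k^{\lambda}(x'):=\lambda^{-d/2}\psi_k(x'/\lambda)$ for $x'\in\lambda\Omega$ yields an $L^2(\lambda\Omega)$-orthonormal system (the change of variables $x'=\lambda x$ contributes a Jacobian factor $\lambda^{d}$, which the normalising constant $\lambda^{-d/2}$ cancels) satisfying $-\Delta\psi_k^{\lambda}=(\lambda_k/\lambda^{2})\,\psi_k^{\lambda}$ in $\lambda\Omega$, since $\Delta_{x'}[\psi_k(x'/\lambda)]=\lambda^{-2}(\Delta\psi_k)(x'/\lambda)$; moreover $\mathcal{B}(\psi_k^{\lambda})=0$ on $\partial(\lambda\Omega)$, because a homogeneous Dirichlet condition is obviously preserved and a homogeneous Neumann condition is preserved up to the nonzero scalar $\lambda^{-1}$ produced by the chain rule on the normal derivative. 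By completeness this identifies $\{(\lambda_k/\lambda^{2},\psi_k^{\lambda})\}_k$ as exactly the eigensystem of $-\Delta_{\mathcal{B}}$ on $\lambda\Omega$.

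Next I would expand the data on both domains, writing $u_\lambda(x):=\overline u(\lambda x)$ (with $\overline u=u$ when $\lambda\le1$). Put $\overline u(x')=\sum_k c_k\,\psi_k^{\lambda}(x')$ with $c_k=\int_{\lambda\Omega}\overline u\,\psi_k^{\lambda}$; the hypothesis $\overline u\in\mathbb{H}^s_{\mathcal{B}}(\lambda\Omega)$ means $\sum_k(\lambda_k/\lambda^{2})^{s}c_k^{2}<\infty$, so all series below converge in the relevant spaces. Using the spectral definition of $(-\Delta_{\mathcal{B}}^{\lambda\Omega})^s$ together with the identity $\psi_k^{\lambda}(\lambda x)=\lambda^{-d/2}\psi_k(x)$,
\[
\lambda^{2s}\big[(-\Delta_{\mathcal{B}}^{\lambda\Omega})^s\overline u\big](\lambda x)=\lambda^{2s}\sum_k\Big(\tfrac{\lambda_k}{\lambda^{2}}\Big)^{s}c_k\,\psi_k^{\lambda}(\lambda x)=\lambda^{-d/2}\sum_k\lambda_k^{s}c_k\,\psi_k(x).
\]
On the other hand $u_\lambda(x)=\overline u(\lambda x)=\sum_k c_k\,\psi_k^{\lambda}(\lambda x)=\lambda^{-d/2}\sum_k c_k\,\psi_k(x)$, so the coefficient of $u_\lambda$ against $\psi_k$ on $\Omega$ is $\lambda^{-d/2}c_k$; since $\sum_k\lambda_k^{s}(\lambda^{-d/2}c_k)^2=\lambda^{-d}\lambda^{2s}\sum_k(\lambda_k/\lambda^{2})^{s}c_k^{2}<\infty$, we get $u_\lambda\in\mathbb{H}^s_{\mathcal{B}}(\Omega)$, so the left-hand side of the claim is well defined. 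Applying the spectral definition of $(-\Delta_{\mathcal{B}}^{\Omega})^s$ to $u_\lambda$ then gives $(-\Delta_{\mathcal{B}}^{\Omega})^s u_\lambda(x)=\lambda^{-d/2}\sum_k\lambda_k^{s}c_k\,\psi_k(x)$, which coincides with the expression displayed above; this establishes the identity, understood as an equality in $\mathbb{H}^{-s}(\Omega)$.

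Finally I would dispose of the bookkeeping about the two domains, which is the only point I expect to require genuine care (the computation itself being purely termwise). For $0<\lambda\le1$ one has $\lambda\Omega\subseteq\Omega$ — valid when $\Omega$ is star-shaped with respect to the origin, in particular for the ball used in the application — so $\overline u$ is simply $u$ and no extension is needed; for $\lambda>1$ one has $\lambda\Omega\supseteq\Omega$, and the extension hypothesis $\overline u\in\mathbb{H}^s_{\mathcal{B}}(\lambda\Omega)$ is precisely what makes the coefficients $c_k$, hence $u_\lambda$, well defined. The subtle part is thus to make precise the function spaces on the nested domains and to verify that the boundary operator $\mathcal{B}$ is transported correctly by the dilation (trivially for Dirichlet, and harmlessly for Neumann because of the scalar factor noted above); once this is in place the statement follows termwise from the spectral definition, exactly as carried out in the middle paragraph.
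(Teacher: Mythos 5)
Your proposal is correct and follows essentially the same route as the paper's proof: identify the dilated, $\lambda^{-d/2}$-normalised eigenfunctions on $\lambda\Omega$ with eigenvalues $\lambda_k/\lambda^{2}$, then compare the spectral expansions of $u_\lambda$ on $\Omega$ and of $\overline u$ on $\lambda\Omega$ termwise (the paper verifies the scaled eigenpairs through the weak formulation rather than the pointwise chain rule, and also sketches an alternative argument via the heat-kernel representation, but the core computation is identical to yours). Your additional checks — completeness of the scaled system and $u_\lambda\in\mathbb{H}^s_{\mathcal{B}}(\Omega)$ — are harmless refinements of the same argument.
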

\begin{proof}
Let $H_{\mathcal{B}}^1(\Omega)$ denote $H^1_0(\Omega)$ in the case of a homogeneous Dirichlet boundary condition, and $H^1_\ast(\Omega)$ in the case of a homogeneous Neumann boundary condition. 
Let us further notice that if $\{ \phi_{k}(x) \}_{k\geq 1}$, $x \in \Omega$ is a set of $L^{2}(\Omega)$-orthonormal eigenfunctions of the Laplacian in $\Omega$ with boundary condition $\mathcal{B}(\phi_{k}) = 0$, with corresponding eigenvalues $\{ \lambda_{k} \}_{k\geq 1}$, that is 
$(\nabla \phi_{k}, \nabla \phi)_{L^2(\Omega^d)} = \lambda_{k} (\phi_{k}, \phi)_{L^2(\Omega)}$ for all  $\phi \in H^{1}_{\mathcal{B}}(\Omega)$, then $\{ \phi_{k} (x'/\lambda)\}_{k \geq 1}$, $x' \in \lambda \Omega$ are a set of eigenfunctions for the Laplacian on $\lambda \Omega$ with corresponding eigenvalues $\{ \lambda_k / \lambda^2 \}_{k\geq 1}$. This is because for $\phi' \in H^{1}_{\mathcal{B}}(\lambda \Omega)$, using the change of variables $x'=\lambda x$, one has 
\begin{align*}
\int_{\lambda \Omega} \nabla_{x'} \phi_{k}(x'/\lambda) \cdot \nabla_{x'} \phi'(x') \dx' &= \lambda^{d} \int_{\Omega} \nabla_{x'} \phi_{k}(x) \cdot \nabla_{x'} \phi'(\lambda x) \dx = \frac{\lambda^{d}}{\lambda^{2}} \int_{\Omega} \nabla_{x} \phi_{k}(x) \cdot \nabla_{x} \phi'(\lambda x) \dx \\
&=  \frac{\lambda^{d}}{\lambda^{2}} \lambda_{k} \int_{\Omega} \phi_{k}(x) \phi'( \lambda x) \dx = \frac{\lambda_{k}}{\lambda^{2}} \int_{\lambda \Omega} \phi_{k}(x'/\lambda ) \phi'(x') \dx'.
\end{align*}
If we need $L^{2}(\lambda \Omega)$-orthonormal eigenfunctions for the Laplacian on $\lambda \Omega$ we need to choose $\phi'_{k}(x') := \frac{1}{\lambda^{d/2}} \phi_{k}(x'/\lambda)$ because $\int_{\lambda \Omega} \phi_{k}(x'/\lambda) \phi_{j}(x'/\lambda) \dx' = \lambda^{d} \int_{\Omega} \phi_{k}(x) \phi_{j}(x) \dx = \lambda^{d} \delta_{j,k}$.

With these considerations in place we now observe that
\begin{align*}
(-\Delta^{\Omega}_{\mathcal{B}})^{s} u_{\lambda}(x) &= \sum_{k=1}^{\infty} \lambda_{k}^{s} \bigg( \int_{\Omega} \phi_{k}(x) \overline{u}(\lambda x) \dx \bigg) \phi_{k}(x) = \sum_{k=1}^{\infty} \lambda_{k}^{s} \bigg( \int_{\lambda \Omega} \overline{u}(x') \frac{1}{\lambda^{d/2}} \phi_{k}(x'/\lambda) \dx' \bigg) \frac{1}{\lambda^{d/2}} \phi_{k}(x'/\lambda) \\
&= \lambda^{2s} \sum_{k=1}^{\infty} \bigg(\frac{\lambda_{k}}{\lambda^{2}} \bigg)^{s} \bigg( \int_{\lambda \Omega} \overline{u}(x') \phi'_{k}(x') \dx' \bigg) \phi'_{k}(x') = \lambda^{2s} [(-\Delta_{\mathcal{B}}^{\lambda \Omega})^{s} \overline{u}](x'),
\end{align*}
where we have used the change of variable $x' = \lambda x$.
\end{proof}

An inequality analogous to the one stated in the next lemma for the fractional spectral Dirichlet Laplacian is known to hold for the integral representation of the fractional Laplacian in the case of an exterior homogeneous Dirichlet boundary condition  (i.e., $u\equiv 0$ on $\mathbb{R}^d \setminus \Omega$) for a bounded star-shaped domain $\Omega$ (see \cite{RosOton2014, boulenger2016}). 

\begin{lemma}[Pohozaev-type estimate] \label{PohoLemma}
Suppose that $\Omega \subset \mathbb{R}^{d}$ is a bounded Lipschitz domain, with $d=2,3$, and let $1/2 < s< 1$. Then, for all $u \in H^{1}_{0}(\Omega)$ with $(-\Delta_{\textrm{D}})^{s}u \in L^2(\Omega)$, we have 
\begin{equation}
\int_{\Omega} ( x \cdot \nabla u) (-\Delta_{\textrm{D}})^{s} u \dx \leq \frac{1}{2}(2s - d) \int_{\Omega} u (-\Delta_{\textrm{D}})^{s} u \dx.
\end{equation}
\end{lemma}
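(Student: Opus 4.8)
The plan is to realise $(-\Delta_{\mathrm{D}})^{s}$ through the Caffarelli--Silvestre extension and then to derive a Rellich--Pohozaev identity on the resulting half-cylinder. Write $\mathcal{C}:=\Omega\times(0,\infty)$, with generic point $(x,y)$, $x\in\Omega$, $y>0$, and set $a:=1-2s\in(-1,0)$. Let $U=U(x,y)$ be the extension of $u$, i.e.\ the solution of $\nabla\cdot(y^{a}\nabla U)=0$ in $\mathcal{C}$, $U=0$ on $\partial\Omega\times(0,\infty)$, $U(\cdot,0)=u$ on $\Omega$; it satisfies $-\lim_{y\to0^{+}}y^{a}\partial_{y}U=d_{s}\,(-\Delta_{\mathrm{D}})^{s}u$ and the energy identity $\int_{\mathcal{C}}y^{a}|\nabla U|^{2}\dx\dy=d_{s}\int_{\Omega}u\,(-\Delta_{\mathrm{D}})^{s}u\dx$, for a fixed positive constant $d_{s}$ (the extension and these two facts are classical, see \cite{caffarelli2016fractional}; the hypothesis $(-\Delta_{\mathrm{D}})^{s}u\in H^{1}(\Omega)$ is what will eventually make $U$ regular enough for the manipulations below).

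Next I would test the extension equation against the dilation field $Z\cdot\nabla U$ with $Z(x,y):=(x,y)$, the infinitesimal generator of the scaling symmetry of the extension equation. Starting from $\int_{\mathcal{C}}(Z\cdot\nabla U)\,\nabla\cdot(y^{a}\nabla U)\dx\dy=0$ and integrating by parts twice --- using $\nabla(Z\cdot\nabla U)=\nabla U+(Z\cdot\nabla)\nabla U$, hence $\nabla U\cdot\nabla(Z\cdot\nabla U)=|\nabla U|^{2}+\tfrac12 Z\cdot\nabla|\nabla U|^{2}$, together with $\nabla\cdot(y^{a}Z)=(d+a+1)y^{a}=(d-2s+2)y^{a}$ --- the interior terms collapse, via the energy identity, to $\tfrac{2s-d}{2}d_{s}\int_{\Omega}u\,(-\Delta_{\mathrm{D}})^{s}u\dx$; the bottom face $\Omega\times\{0\}$ yields $d_{s}\int_{\Omega}(x\cdot\nabla u)\,(-\Delta_{\mathrm{D}})^{s}u\dx$ (since $-y^{a}\partial_{y}U\to d_{s}(-\Delta_{\mathrm{D}})^{s}u$ and $y\,\partial_{y}U\to0$ as $y\to0^{+}$); and on the lateral face $\partial\Omega\times(0,\infty)$, where $U\equiv0$ forces $\nabla U=(\partial_{n}U)\,n$ ($n$ the outward unit normal of $\partial\Omega$), one is left with $\tfrac12\int_{0}^{\infty}\!\!\int_{\partial\Omega}y^{1-2s}(x\cdot n)(\partial_{n}U)^{2}\dd S\dy$. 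Rearranging and dividing by $d_{s}$ gives the Pohozaev identity
\begin{align*}
\int_{\Omega}(x\cdot\nabla u)\,(-\Delta_{\mathrm{D}})^{s}u\dx
={}&\frac{2s-d}{2}\int_{\Omega}u\,(-\Delta_{\mathrm{D}})^{s}u\dx \\
&{}-\frac{1}{2d_{s}}\int_{0}^{\infty}\!\!\int_{\partial\Omega}y^{1-2s}\,(x\cdot n)\,(\partial_{n}U)^{2}\dd S\dy .
\end{align*}
Since $y^{1-2s}>0$ and $(\partial_{n}U)^{2}\ge0$, the last term is nonpositive precisely when $x\cdot n\ge0$ a.e.\ on $\partial\Omega$ --- which is exactly what holds for a bounded domain star-shaped with respect to the origin, in particular for the balls centred at the origin to which the estimate is applied in Proposition~\ref{BlowUpProp}; discarding that term yields the claimed inequality.

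The step I expect to be the main obstacle is the rigorous justification of the two integrations by parts when $\partial\Omega$ is only Lipschitz, where $U$ need not be smooth up to the lateral boundary and $\partial_{n}U$, together with the weighted boundary integral, must be interpreted carefully. I would argue by approximation: the identity and the sign of the lateral term are transparent when $u=\sum_{k\le N}u_{k}\psi_{k}$ is a finite combination of Dirichlet eigenfunctions, for which $U(x,y)=\sum_{k\le N}u_{k}\,\psi_{k}(x)\,\theta_{s}(\sqrt{\lambda_{k}}\,y)$ with $\theta_{s}$ the smooth, exponentially decaying Bessel profile normalised by $\theta_{s}(0)=1$; one then lets $\sum_{k\le N}u_{k}\psi_{k}\to u$ and uses $\int_{\Omega}v\,(-\Delta_{\mathrm{D}})^{s}v\dx=\|v\|_{\mathbb{H}^{s}(\Omega)}^{2}$, the fact that $u\in H^{1}_{0}(\Omega)$ controls $x\cdot\nabla u$ in $L^{2}(\Omega)$, and $(-\Delta_{\mathrm{D}})^{s}u\in H^{1}(\Omega)\subset L^{2}(\Omega)$, so that the left-hand side and the first term on the right converge, while the remaining nonpositive term can only drop in the limit --- which is all the inequality needs. (A self-contained route that avoids the extension altogether: substitute the heat-semigroup representation $(-\Delta_{\mathrm{D}})^{s}u=\tfrac{s}{\Gamma(1-s)}\int_{0}^{\infty}\bigl(u-e^{t\Delta_{\mathrm{D}}}u\bigr)t^{-1-s}\dt$ displayed in the proof of Lemma~\ref{ScaleLemma}, use $\int_{\Omega}(x\cdot\nabla u)\,u\dx=-\tfrac{d}{2}\|u\|_{L^{2}(\Omega)}^{2}$, and evaluate $\int_{\Omega}(x\cdot\nabla u)\,e^{t\Delta_{\mathrm{D}}}u\dx$ from the dilation covariance of the Dirichlet heat kernel (Lemma~\ref{ScaleLemma}); the leftover domain-deformation term again has a sign controlled by that of $x\cdot n$ on $\partial\Omega$.)
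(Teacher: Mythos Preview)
Your argument is correct in outline and takes a genuinely different route from the paper. The paper works directly in the spectral framework: it writes $\int_\Omega(x\cdot\nabla u)(-\Delta_{\mathrm D})^{s}u=\frac{\mathrm d}{\mathrm d\lambda}\big|_{\lambda\searrow1}\int_\Omega u_\lambda\,(-\Delta_{\mathrm D}^\Omega)^{s}u$, transfers the operator onto $u_\lambda$ via the scaling Lemma~\ref{ScaleLemma}, expands the resulting quantity $I_\lambda=\int_{\lambda\Omega}[(-\Delta_{\mathrm D}^{\lambda\Omega})^{s}u]\,u(\cdot/\lambda)$ in the Dirichlet eigenbasis, shows that the cross term $\|u-\lambda^{d}u_\lambda\|_{\mathbb H^{s}(\Omega)}^{2}$ has vanishing $\lambda$-derivative at $1$, and obtains the inequality by discarding a nonpositive remainder. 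Your extension argument instead makes the Pohozaev mechanism visible as a genuine boundary contribution, the lateral integral $\tfrac12\int_0^\infty\!\int_{\partial\Omega}y^{1-2s}(x\cdot n)(\partial_n U)^2\,\mathrm dS\,\mathrm dy$, whose sign is governed by $x\cdot n$; this is closer to the classical Rellich--Pohozaev picture and, as you correctly flag, isolates the geometric hypothesis (star-shapedness with respect to the origin) that the paper's statement does not spell out but that is satisfied by the balls of Proposition~\ref{BlowUpProp}. The paper's route stays entirely within the eigenfunction calculus and avoids the weighted trace theory on the cylinder, at the price of a more delicate $\lambda$-derivative computation; your route is more transparent about where the sign comes from, and your proposed approximation by finite eigenfunction sums $U(x,y)=\sum_{k\le N}u_k\psi_k(x)\theta_s(\sqrt{\lambda_k}\,y)$ is precisely the right device to justify the integrations by parts on a merely Lipschitz lateral boundary.
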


\begin{proof}

Consider the natural extension of $u \in H^1_0(\Omega)$ to the whole of $\mathbb{R}^{d}$:
\begin{equation} \label{DirExt}
    \overline{u} := \left \{\begin{array}{ll} u  & \textrm{in } \Omega, \\ 0 & \textrm{in } \mathbb{R}^{d} \setminus \Omega. \end{array} \right. 
\end{equation}
As $\textrm{supp}\, \overline{u} \subset\subset M$ for every (for example, bounded open Lipschitz) domain $M \supset \Omega$, it follows that $\overline{u}\in H^1_0(M)$. Therefore, in particular, $\overline{u} \in H^s_0(M)$
for all $s \in (1/2,1)$. 
With $\mathbb{H}^s(\Omega)$ defined with respect to the Dirichlet Laplacian, as is the case here, it is known that, for $s \in (1/2,1)$, $H^s_0(\Omega) = \mathbb{H}^s(\Omega)$ (see Remark 2.1 in \cite{caffarelli2016fractional} and the paragraph preceding Theorem 2.5 therein). 
Hence, $\overline{u} \in \mathbb{H}^s(M)$. Suppose that $\lambda > 1$. To simplify the notation, from now on we will refer to $\overline{u}$ simply as $u$ and each time we use the notation $u_{\lambda}$ (where $u_\lambda(x) := u(\lambda x)$), we will mean $\overline{u}_{\lambda}$ (where $\overline{u}_\lambda(x) := \overline{u}(\lambda x)$).  Clearly, $u_\lambda \in H^1_0(\lambda\Omega)$ for all $\lambda >1$. 

Since, by assumption, $u \in H^{1}_0(\Omega)$, we have that 
\begin{equation} \label{GradConvLambda}
\frac{u_{\lambda} - u}{\lambda - 1} \rightharpoonup x \cdot \nabla u \quad \text{weakly in } L^{2}(\Omega) \text{ as } \lambda \searrow 1;
\end{equation}
cf. Lemma 4.2 in \cite{rosoton2015} for a proof. Since, again by assumption, $(-\Delta_{\textrm{D}})^{s} u \in L^{2}(\Omega)$, it follows by passing to the weak limit that 
\begin{equation*}
\int_{\Omega} (x \cdot \nabla u) (-\Delta_{\textrm{D}})^{s} u \dx = \ddlambda \bigg|_{\lambda\searrow 1} \int_{\Omega} u_{\lambda} (-\Delta_{\textrm{D}})^{s} u \dx.
\end{equation*}
Next, as a preparation for the application of the scaling property stated in Lemma \ref{ScaleLemma}, we rewrite the expression above in terms of the notation of Lemma \ref{ScaleLemma} as 
\begin{equation*}
\int_{\Omega} (x \cdot \nabla u) (-\Delta_{\textrm{D}}^{\Omega})^{s} u \dx = \ddlambda \bigg|_{\lambda\searrow 1} \int_{\Omega} u_{\lambda} (-\Delta_{\textrm{D}}^{\Omega})^{s} u \dx.
\end{equation*}
As $u_{\lambda} \in H^{1}_0(\lambda \Omega)$ and $u \in H^{1}_0(\Omega)$, the scaling property of the spectral fractional Laplacian stated in Lemma \ref{ScaleLemma} implies that
\begin{align*}
\int_{\Omega} u_{\lambda}(x) (-\Delta_{\textrm{D}}^{\Omega})^{s} u(x) \dx &= \int_{\Omega}(-\Delta_{\textrm{D}}^{\Omega})^{s} u_{\lambda}(x) u(x) \dx = \lambda^{2s} \int_{\Omega} [(-\Delta_{\textrm{D}}^{\lambda \Omega})^{s} u](\lambda x) u(x)  \dx \\
&= \lambda^{2s -d} \int_{ \lambda \Omega} [(-\Delta_{\textrm{D}}^{\lambda \Omega})^{s} u](x') u(x'/\lambda) \dx',
\end{align*}
where we applied the change of variable $x' = \lambda x$. We then have 
\begin{align}
\int_{\Omega} (x \cdot \nabla u) (-\Delta_{\textrm{D}})^{s} u \dx &= \ddlambda \bigg|_{\lambda\searrow 1} \lambda^{2s -d} \int_{\lambda \Omega} [(-\Delta_{\textrm{D}}^{\lambda \Omega})^{s} u](x') u(x'/\lambda) \dx' \nonumber \\
&=(2s - d) \int_{\Omega}  (-\Delta_{\textrm{D}}^{ \Omega})^{s} u(x) u(x) \dx +\ddlambda \bigg|_{\lambda\searrow 1} I_{\lambda} \nonumber \\
&= (2s - d) \int_{\Omega} u (-\Delta_{\textrm{D}}^\Omega)^{s} u dx +  \ddlambda \bigg|_{\lambda\searrow 1} I_{\lambda}, \label{Ilambda0}
\end{align}
with $I_{\lambda} :=  \int_{\lambda \Omega} [(-\Delta_{\textrm{D}}^{\lambda \Omega})^{s} u](x') u(x'/\lambda) \dx'$. 
We recall that 
\begin{equation*}
[(-\Delta_{\mathcal{B}}^{\lambda \Omega})^{s} u](x') = \sum_{k=1}^{\infty} \frac{\lambda_{k}^{s}}{\lambda^{2s}} u_k^{\lambda} \phi_{k}'(x'), \quad \textrm{where} \quad  u_k^\lambda = \int_{\lambda \Omega} u(x) \phi_k'(x') \dx' \quad \textrm{and} \quad \phi'_k(x') = \frac{1}{\lambda^{d/2}} \phi_k(x'/\lambda)
\end{equation*}
are the scaled eigenfunctions $\{ \phi'_{k} \}_{k \geq 1}$, defined in the proof of Lemma \ref{ScaleLemma}, which are $L^2(\lambda \Omega)$-orthonormal.  
Let $\tilde{u}_k := \int_{\lambda \Omega} u(x') \phi_k(x'/\lambda) \dx'$; hence, we have 
\begin{equation} \label{ulambdau}
\tilde{u}_k = \int_{\lambda \Omega} u(x') \phi_k(x'/\lambda) \dx' = \lambda^{d/2} \int_{\lambda\Omega} u(x') \phi_k'(x') \dx' = \lambda^{d/2} u_k^\lambda.
\end{equation}
By expanding the expression for $I_{\lambda}$, it follows that
\begin{align} \label{Ilambda1}
    I_{\lambda} &= \sum_{k, j= 1}^{\infty} \lambda_k^s \lambda^{-2s} \bigg( \frac{1}{\lambda^{d/2}} \int_{\lambda \Omega} u(x) \phi_{k}(x'/\lambda) \dx' \bigg) u_j \int_{\lambda \Omega} \frac{1}{\lambda^{d/2}} \phi_k(x'/\lambda)  \phi_j(x'/\lambda) \dx' \\ &=\lambda^{-2s} \sum_{k=1}^{\infty} \lambda_{k}^{s} u_k \widetilde{u}_{k} = \frac{1}{2} \lambda^{-2s} \sum_{k=1}^{\infty} \lambda_{k}^{2s} u_{k}^{2}  + \frac{1}{2} \lambda^{-2s} \sum_{k=1}^{\infty} \tilde{u}_{k}^{2} - \lambda^{-2s} \sum_{k=1}^{\infty} \lambda_{k}^{s} (u_k - \tilde{u}_k)^2, \nonumber 
\end{align}
where in the last line we have used the equality $ab = \frac{1}{2} a^2 + \frac{1}{2} b^2 - ( a - b)^2$. 

We have that
$\lambda^{-2s} \sum_{k=1}^{\infty}  \tilde{u}_{k}^{2} = \lambda^{d-2s} \sum_{k=1}^{\infty} (u_k^\lambda)^2 = \lambda^{d-2s} \| u \|_{L^2(\lambda \Omega)}^2 = \lambda^{d-2s} \| u \|_{L^2(\Omega)}^2$,
where the last equality follows from the fact that $u$ has been extended by $0$ outside $\Omega$ (cf. \eqref{DirExt}). Furthermore, 
\begin{align*}
u_k - \tilde{u}_k = \int_{\Omega} u(x) \phi_k(x) \dx - \int_{\lambda \Omega} u(x') \phi_k(x'/\lambda) \dx' = \int_{\Omega} (u(x) - \lambda^d u(\lambda x) ) \phi_k(x) \dx,
\end{align*}
and therefore \eqref{Ilambda1} 
is equivalent to 
\begin{align} \label{Ilambda2}
    I_{\lambda} &= \frac{1}{2} \lambda^{-2s} \| u \|_{\mathbb{H}^{2s}(\Omega)}^2 + \frac{1}{2} \lambda^{d-2s} \| u \|_{L^2(\Omega)}^2 - \lambda^{-2s}  \| u - \lambda^d   u_\lambda \|_{\mathbb{H}^s(\Omega)}^2 =I _{\lambda, 1} + I_{\lambda, 2},
\end{align}
with $I_{\lambda, 1} \coloneqq \frac{1}{2} \lambda^{-2s} \| u \|_{\mathbb{H}^{2s}(\Omega)}^2 + \frac{1}{2} \lambda^{d-2s} \| u \|_{L^2(\Omega)}^2$ and  $I_{\lambda, 2} \coloneqq - \lambda^{-2s}  \| u - \lambda^d   u_\lambda \|_{\mathbb{H}^s(\Omega)}^2$.

Next, observe that
\begin{equation} \label{Htheta}
    u(x) - u(\lambda x) = \int_0^1 \ddtheta u(H_\theta(x)) \textrm{d}\theta = (1-\lambda) \int_0^1 \nabla u(H_\theta(x)) \cdot x \textrm{d}\theta, \quad \textrm{where} \quad H_\theta(x) = \theta x + (1-\theta)\lambda x.
\end{equation}
We shall use this equality to prove that $\| u - u_\lambda\|_{L^2(\Omega)} \to 0$ as $\lambda \searrow 1$. Note, to this end, that
\begin{align*}
    \| u - u_\lambda\|_{L^2(\Omega)}^2 &= (1-\lambda)^2 \int_\Omega \left(  \int_0^1 \nabla u (H_\theta(x)) \cdot x \dd\theta \right)^2 \dx \leq (1-\lambda)^2 \int_\Omega \int_0^1 |\nabla u (H_\theta(x)) \cdot x|^2 \dd\theta \dx \\
    &= (1-\lambda)^2 \int_0^1  \int_{M_\theta^\lambda} |\nabla u(x') \cdot x'|^2 (C_\theta^\lambda)^2 C_\theta^\lambda \dx'  \dd\theta = (1-\lambda)^2 \int_0^1 \| \nabla u \cdot x'\|_{L^2(M_\theta^\lambda)}^2 (C_\theta^\lambda)^3 \dd \theta,
\end{align*}
where in the inequality in the first line we have applied the Cauchy--Schwarz inequality and in the passage from the first to the second line we have used the change of variables $x':=\theta x + (1-\theta)\lambda x$,  $C_\theta^\lambda := 1/(\theta + (1-\theta)\lambda)$ and $M_\theta^\lambda := (C_\theta^\lambda)^{-1} \Omega$. Since $(C_\theta^\lambda)^{-1} \in [1, \lambda]$ for $\lambda>1$ sufficiently close to 1 we have $M_\theta^\lambda \subset M$ for a fixed $M$ (contained, for example, in $2\Omega$) and recalling the natural extension \eqref{DirExt} of $u$, it follows that $\| \nabla u \cdot x' \|_{L^2(M_\theta^\lambda)} \leq\| \nabla u \cdot x' \|_{L^2(M)}$. Therefore, because $C_\theta^\lambda \leq 1$, we finally have 
\begin{align} \label{L2boundulambda}
     \| u - u_\lambda\|_{L^2(\Omega)}^2 &\leq (1-\lambda)^2  \| |\nabla u \cdot x' |\|_{L^2(M)}^2.
\end{align}
As a consequence, $\| u - u_\lambda\|_{L^2(\Omega)} \to 0$ as $\lambda \searrow 1$.

Next, we shall prove that for $\lambda$ close to 1, there exists a constant $C(u)$, dependent on $u$, but independent of $\lambda$, such that $| u - u_\lambda|_{H^1(\Omega)} \leq C(u)$. Then, by \eqref{L2boundulambda} and using a function space interpolation inequality (analogous to Lemma A.2 in \cite{carrillosuli2024}), we will have for $s \in (1/2,1)$ that
\begin{align} \label{Hsboundulambda}
    \| u - u_\lambda\|_{\mathbb{H}^s(\Omega)} \leq C \|u - u_\lambda\|_{L^{2}(\Omega)}^{1-s} \; \| u -u_\lambda \|_{\mathbb{H}^1(\Omega)}^{s} \leq C(u) (1-\lambda)^{1-s},
\end{align}
which will then imply that $\| u-u_\lambda\|_{\mathbb{H}^s(\Omega)} \to 0$  as $\lambda \searrow 1$. 
Assuming that $\lambda < 2$, for example, the existence of such a constant $C(u)$ follows by the application of the Cauchy--Schwarz and triangle inequalities together with the fact that for the natural extension of $u$ we have $|u|_{H^1(M)} < \infty$ for any subset $M \supset \Omega$:
\begin{align*}
    |u - u_\lambda|_{H^1(\Omega)}^2 &= \int_\Omega |\nabla(u(x) -u(\lambda x)|^2 \dx = \int_\Omega |\nabla u(x) -\lambda (\nabla u)(\lambda x)|^2 \dx
    \\ & = \int_\Omega |\nabla u|^2 \dx + \lambda^2 \int_\Omega |(\nabla u)(\lambda x)|^2 \dx - 2 \lambda \int_\Omega \nabla u(x) \cdot \nabla u(\lambda x) \dx \\
    &\leq |u|_{H^1(\Omega)}^2 + |u|_{H^1(M)}^2 + 2 |u|_{H^1(\Omega)} |u|_{H^1(M)}=C(u). 
\end{align*}

Thus we have shown  that $\| u-u_\lambda\|_{\mathbb{H}^s(\Omega)} \to 0$  as $\lambda \searrow 1$. Hence, by the triangle inequality,
\begin{align*}
    \| u - \lambda^d u_\lambda\|_{\mathbb{H}^s(\Omega)} &\leq (1-\lambda^d) \| u \|_{\mathbb{H}^s(\Omega)} + \lambda^d \| u - u_\lambda\|_{\mathbb{H}^s(\Omega)} 
\end{align*}
and therefore we also have $\lim_{\lambda \searrow 1}  \| u - \lambda^d u_\lambda\|_{\mathbb{H}^s(\Omega)} = 0$. In addition,  

\begin{align}
\frac{\dd}{\dd \lambda} \bigg|_{\lambda \searrow 1} I_{\lambda, 2} &=  \ddlambda  \bigg|_{\lambda \searrow 1} \bigg(  -\lambda^{-2s} \sum_{k=1}^{\infty} \lambda_k^s \bigg( \int_{\Omega} (u(x)  - \lambda^d u(\lambda x)) \phi_k(x) \dx \bigg)^2 \bigg) \nonumber \\
&= 2s \lim_{\lambda \searrow 1} \| u - \lambda^d u_\lambda\|_{\mathbb{H}^s(\Omega)}^2  - \ddlambda \bigg|_{\lambda \searrow 1}  \sum_{k=1}^{\infty} \lambda_k^s \bigg( \int_{\Omega} (u(x)  - \lambda^d u(\lambda x)) \phi_k(x) \dx \bigg)^2 \nonumber \\
&= - \ddlambda \bigg|_{\lambda \searrow 1}  \sum_{k=1}^{\infty} \lambda_k^s \bigg( \int_{\Omega} (u(x)  - \lambda^d u(\lambda x)) \phi_k(x) \dx \bigg)^2 \nonumber \\
&= - \ddlambda \bigg|_{\lambda \searrow 1} \| u - \lambda^d u_\lambda \|_{\mathbb{H}^s(\Omega)}^2. \label{I2limit1} 
\end{align}
Because the limit in $\lambda$ is taken for $\lambda>1$ decreasing toward 1 and $\lim_{\lambda \searrow 1}  \| u - \lambda^d u_\lambda\|_{\mathbb{H}^s(\Omega)} = 0$, we see that the expression appearing on the right-hand side of \eqref{I2limit1} is nonpositive.

\color{black}
In this way, we have from \eqref{Ilambda2} that 
\begin{align*}
\ddlambda \bigg|_{\lambda\searrow 1} I_{\lambda} &= \frac{\dd}{\dd\lambda} \bigg|_{\lambda\searrow 1} I_{\lambda, 1} + \frac{\dd}{\dd\lambda} \bigg|_{\lambda\searrow 1} I_{\lambda, 2} \\
&\leq \frac{\dd}{\dd\lambda} \bigg|_{\lambda\searrow 1} I_{\lambda, 1} = -s \|u\|_{\mathbb{H}^{2s}(\Omega)}^2 + \frac{d-2s}{2} \| u \|_{L^2(\Omega)}^2 \leq  \frac{d-2s}{2} \| u \|_{\mathbb{H}^{s}(\Omega)}^2. 
\end{align*}
Returning to \eqref{Ilambda0} we then arrive at the desired final result.

\end{proof}

\begin{lemma} \label{ConvLemma}
Assume the same hypotheses as in Lemma \ref{ScaleLemma} and consider the problems 
\begin{equation}
(-\Delta_{\mathcal{B}}^{\Omega})^s u = f, \quad f \in H^1(\Omega),
\qquad \textrm{and} \qquad
(-\Delta_{\mathcal{B}}^{\lambda \Omega})^s \overline{u} = \overline{f}, \quad \overline{f} \in H^1(\lambda \Omega),
\end{equation}
where $\overline{f}$ is an extension or restriction $f$ on $\lambda \Omega$ respectively if $\lambda >1$ or $\lambda < 1$, such that $\overline{f} = f$ on $\Omega$. Then, $\overline{u} \to u$ in $\mathbb{H}^s(\Omega)$ as $\lambda \to 1$.
\end{lemma}
\begin{proof}
Similarly as in Lemma \ref{ScaleLemma}, we shall use the notation $f_\lambda(x) = \overline{f}(\lambda x)$. By the same argument as in Lemma \ref{ScaleLemma} we have
$((-\Delta_{\mathcal{B}}^{\lambda \Omega})^{-s}\overline{f})(x) = \lambda^{2s} (-\Delta^{\Omega}_{\mathcal{B}})^{-s} (f_\lambda(x/\lambda)) $. Hence, 
\begin{equation*}
\overline{u}(x) - u(x) = ((-\Delta_{\mathcal{B}}^{\lambda \Omega})^{-s}\overline{f})(x) - (-\Delta^{\Omega}_{\mathcal{B}})^{-s}f(x) =  (-\Delta^{\Omega}_{\mathcal{B}})^{-s} (\lambda^{2s} \overline{f}(x) - f(x)).
\end{equation*}
Therefore, 
\begin{align*}
\| \overline{u} - u \|_{\mathbb{H}^s(\Omega)}^2 &= ((-\Delta_{\mathcal{B}}^{\Omega})^s (\overline{u}-u),\overline{u} - u)  = ( \lambda^{2s} \overline{f} - f, (-\Delta^{\Omega}_{\mathcal{B}})^{-s} ( \lambda^{2s} \overline{f} - f)) \\
&\leq \|  \lambda^{2s} \overline{f} - f \|_{L^2(\Omega)} \| (-\Delta^{\Omega}_{\mathcal{B}})^{-s} ( \lambda^{2s} \overline{f} - f) \|_{L^2(\Omega)} \\
&\leq C \|  \lambda^{2s} \overline{f} - f \|_{L^2(\Omega)}^2,
\end{align*}
where in the passage from the first to the second line we have used the Cauchy--Schwarz inequality, and in the passage from the second to the third line we have used the bound on the solution of a fractional Poisson equation with spectral fractional Laplacian in terms of the source term (see the Appendix in \cite{carrillosuli2024}).
Using the proof of Lemma 4.2 in \cite{rosoton2015}, 
we have that $\|  \lambda^{2s} \overline{f} - f \|_{L^2(\Omega)}^2$ tends to zero as $\lambda \to 1$. This completes the proof. \end{proof} 
\end{document}